\DeclareMathOperator{\Ad}{Ad}
\DeclareMathOperator{\GL}{GL}
\DeclareMathOperator{\SO}{SO}
\DeclareMathOperator{\SU}{SU}
\DeclareMathOperator{\Sp}{Sp}
\DeclareMathOperator{\U}{U}
\DeclareMathOperator{\Spin}{Spin}
\title{The Ambrose-Singer Theorem for cohomogeneity one Riemannian manifolds}
\date{}
\author{J.~L. Carmona Jiménez, M. Castrillón López, J.~C. Díaz-Ramos}
\numberwithin{equation}{section}
\renewcommand{\to}{\longrightarrow}
\newcommand{\N}{\mathbb{N}}
\newcommand{\R}{{\mathbb{R}}}
\newcommand{\C}{{\mathbb{C}}}
\newcommand{\Z}{\mathbb{Z}}
\newcommand{\LM}{\mathcal{L}(M)}
\newcommand{\so}{\mathfrak{so}}
\newcommand{\T}{\nabla}
\newcommand{\TT}{\tilde{\T}}
\newcommand{\G}{\mathcal{G}}
\newcommand{\Isot}{\mathrm{Isot}}
\renewcommand{\a}{\mathfrak{a}}
\newcommand{\h}{\mathfrak{h}}
\newcommand{\m}{\mathfrak{m}}
\newcommand{\g}{\mathfrak{g}}
\newcommand{\gl}{\mathfrak{gl}}
\newcommand{\dt}{\frac{d}{dt} \Big |_{t=0}}
\newcommand{\SC}[1]{\underset{\text{\tiny{{$#1$}}}}{\text{\Large{$\mathfrak{S}$}}}}
\newcommand{\RH}{\mathbb{R}\mathrm{H}}
\newcommand{\curv}{\tilde{R}}
\newcommand{\tors}{\tilde{T}}
\newcommand{\pa}[1]{\frac{\partial}{\partial #1}}
\theoremstyle{definition}
\newtheorem*{AS-theorem}{Ambrose-Singer Theorem}
\newtheorem{definition}{Definition}[section]
\newtheorem{proposition}[definition]{Proposition}
\newtheorem{theorem}[definition]{Theorem}
\newtheorem{lemma}[definition]{Lemma}
\newtheorem{remark}[definition]{Remark}
\begin{document}
\thanks{
The first and second authors have been supported by the project PID2021-126124NB-I00 (Spain). The third author has been supported by projects PID2022-138988NB-I00/AEI/10.13039/501100011033
(Spain) and ED431F 2020/04, ED431C 2023/31 (Xunta de Galicia, Spain).}

\begin{abstract}
        We characterize isometric actions whose principal orbits are hypersurfaces through the existence of a linear connection satisfying a set of covariant equations in the same spirit as the Ambrose-Singer Theorem for homogeneous space. These results are then used to describe isometric cohomogeneity one foliations in terms of such connections. Finally, we provide explicit examples of these objects in Euclidean spaces and real hyperbolic spaces.
\end{abstract}

\maketitle

{\footnotesize
\textbf{Key words.} Ambrose-Singer theorem, cohomogeneity one actions, canonical connection.

\textbf{MSC:} 53C05, 53C12, 53C40.
}

\section{Introduction}


In the early twentieth century, Élie Cartan gave a local characterization of locally symmetric spaces: a Riemannian manifold $(M,g)$ is locally symmetric if and only if its curvature tensor is invariant under parallel transport. This result was later generalized to Riemannian homogeneous manifolds, and it is now known as the Ambrose-Singer Theorem:
\begin{AS-theorem}\cite{AS1958}
	Let $(M,g)$ be a connected and simply-connected complete Riemannian manifold. Then, the following statements are equivalent:
	\begin{enumerate}
		\item The manifold $M$ is Riemannian homogeneous.
		\item The manifold $M$ admits a linear connection $\TT$ satisfying:
		\begin{equation*}
		\TT R = 0,\quad \TT S = 0,\quad \TT g = 0,
		\end{equation*}
	\end{enumerate}
where $R$ is the curvature tensor of the Levi-Civita connection $\T$ and  $S = \T - \TT$.
\end{AS-theorem} 


The explicit classification of the tensor $S$, known as the \emph{homogeneous structure tensor}, into $\mathrm{O}(n)$-irreducible classes, where $n = \mathrm{dim}(M)$, yields valuable geometric understanding. This classification benefits from the intricate interplay among partial differential equations, algebra, and geometry, as described by the Ambrose-Singer Theorem. This synergy was leveraged in the program that formally started with~\cite{TV1983}. It aims to understand Riemannian homogeneous manifolds through the lens of Riemannian homogeneous structures. For example, it characterizes a specific Riemannian homogeneous manifold by scrutinizing all its homogeneous structures (see~\cite{CGS2013, CC2022}). Additionally, different Riemannian transitive Lie group actions give rise to distinct Riemannian homogeneous structures. For a recent panoramic view of this program, see~\cite{CC2019}.


However, this program was exclusive to pseudo-Riemannian geometry. In fact, it has recently been understood that these techniques can be applied to non-metric frameworks, such as symplectic homogeneous manifolds, see~\cite{CC2022*}. In some sense, this opened the door to considering new generalizations of the Ambrose-Singer Theorem for non-transitive actions.


A cohomogeneity one action on a Riemannian manifold is an isometric action whose principal orbits are hypersurfaces.
If a group acts on a Riemannian manifold with cohomogeneity one, we say that such a manifold is a cohomogeneity one manifold.
These are precisely the objects of study in this article.
Techniques based on cohomogeneity one actions have successfully been used to find examples of interesting geometric structures, such as manifolds with positive curvature, Einstein metrics, or Ricci solitons, see~\cite{GWZ2008,GZ2000,W2006}.
The idea behind these constructions is the following.
Many geometric properties of Riemannian manifolds are encoded by partial differential equations whose solutions are usually very difficult to find.
However, if these structures are invariant under a cohomogeneity one action, these complicated PDEs translate into ordinary differential equations along a geodesic orthogonal to the orbits. This is a much easier problem, for which existence and uniqueness of solutions is well understood.
In general, the use of cohomogeneity one symmetries is a technique that traces back at least to the introduction of polar, cylindrical, or spherical coordinates in problems with rotational symmetries, such as the motion of planets revolving around the sun.


The aim of this article is to initiate a line of research, analogous to Tricerri and Vanhecke's program, adapted to cohomogeneity one Riemannian manifolds.  This objective is pursued by achieving the following goals.


Sec.~\ref{s:CO1-AS-Theorem} and Sec.~\ref{s:local-CO1-AS-Theorem} are devoted to proving an Ambrose-Singer Theorem for cohomogeneity one and locally cohomogeneity one Riemannian manifolds, respectively. We characterize connected, simply-connected, and complete cohomogeneity one Riemannian manifolds by the existence of a geodesically complete linear connection satisfying certain geometric covariant derivative equations. This connection is called the cohomogeneity one AS-connection. Analogously to the transitive framework, we introduce the difference tensor $S = \T - \TT$ where $\TT$ is the connection under consideration and $\T$ is the Levi-Civita connection. This is called the cohomogeneity one structure. Afterwards, we relax the topological conditions and locally characterize cohomogeneity one Riemannian manifolds, that is, we assume that there is a Lie pseudo-group of isometries acting on the manifold whose principal orbits are hypersurfaces. The existence of cohomogeneity one homogeneous structures describes the nature of being a locally cohomogeneity one manifold. 


Following the aforementioned program of~\cite{TV1983} or~\cite{CC2019}, together with the non-metric perspective (see \cite{CC2022*}), in Sec.~\ref{s:decom-CO1-AS}, we study the $\SO(n)$-irreducible submodules in the space of the tensor elements of cohomogeneity one structures. This decomposition yields four submodules for cohomogeneity one structures. In particular, one of these submodules encodes the second fundamental form of the leaves, and another contains the homogeneous structure tensor of the leaves. We can thus get valuable information of the cohomogeneity one Riemannian action by examining the projection onto these submodules.


In Sec.~\ref{s:difference-tensor}, we give a formula for the cohomogeneity one structure constructed in the main theorem of Sec.~\ref{s:CO1-AS-Theorem}. Afterwards, we study the uniqueness of this cohomogeneity one structure with respect to the action. Finally, Sec.~\ref{s:examples} is dedicated to presenting examples of cohomogeneity one structures in Euclidean spaces and real hyperbolic spaces.


\section{Preliminaries}

Recall that a homogeneous manifold $M=G/H$ is said to be reductive if the Lie algebra of $G$ can be decomposed as $\mathfrak{g}=\mathfrak{h}+\mathfrak{m}$ where $\mathfrak{h}$ is the Lie algebra of $H$ and $\mathfrak{m}$ is a subspace such that $\mathrm{Ad}_H\mathfrak{m}\subset \mathfrak{m}$. The following result is a generalization of the Ambrose-Singer Theorem with a non-necessarily Riemannian geometric structure.

\begin{theorem}\cite[Thm.~2.2]{CC2022*} Let $M$ be a connected, simply-connected manifold equipped with a tensor field $K$. Then there is a group $G$ of transformations preserving $K$ and acting reductively and transitively on $M$ (that is, $(M,K)$ is a reductive homogeneous manifold) if and only if there is a complete linear connection $\TT$ on $M$ such that
\begin{equation}\label{CO1.eq:AS-equations}
	\TT \curv = 0, \quad \TT \tors = 0, \quad \TT K = 0,
\end{equation}
where $\curv$ and $\tors$ are the curvature and torsion of $\TT$, respectively. 
\end{theorem}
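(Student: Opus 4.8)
The plan is to prove the two implications separately, following the strategy of the classical Ambrose--Singer and Cartan--Ambrose--Hicks theorems, but carrying the arbitrary tensor field $K$ through every step where the metric would classically appear. For the direction assuming that $(M,K)=G/H$ is reductive homogeneous with $G$ preserving $K$, I would take $\TT$ to be the \emph{canonical connection} attached to a fixed reductive decomposition $\g=\h\oplus\m$, where $\h$ is the Lie algebra of $H$. The decisive classical fact (see Kobayashi--Nomizu, \emph{Foundations of Differential Geometry}, Vol.~II, Ch.~X) is that a tensor field on a reductive homogeneous space is $G$-invariant if and only if it is parallel for the canonical connection. Since $\curv$ and $\tors$ are formed naturally from the $G$-invariant connection $\TT$, they are themselves $G$-invariant, whence $\TT\curv=0$ and $\TT\tors=0$; and $K$ is $G$-invariant by hypothesis, so $\TT K=0$. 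Geodesic completeness of $\TT$ follows because its geodesics through the base point $p$ are the curves $t\mapsto\e(tX)\cdot p$ with $X\in\m$, defined for all $t\in\R$.

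The hard direction is the reconstruction. Fix $p\in M$ and set $\m=T_pM$. Let $\h\subseteq\gl(\m)$ be the holonomy algebra of $\TT$ at $p$; because $\TT\curv=0$, parallel transport intertwines the curvature operators, so $\h$ coincides with the linear span of $\{\curv_p(x,y):x,y\in\m\}$, which is automatically closed under commutator. I would then define a bracket on $\g=\h\oplus\m$ by declaring $[A,B]$ to be the commutator for $A,B\in\h$, $[A,x]=A(x)$ for $A\in\h$ and $x\in\m$, and $[x,y]=\curv_p(x,y)-\tors_p(x,y)$ for $x,y\in\m$, the curvature term giving the $\h$-component and the torsion term the $\m$-component. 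The main computation is the Jacobi identity: the $(\h,\h,\h)$ and $(\h,\h,\m)$ cases are formal; the $(\h,\m,\m)$ case expresses that holonomy acts by derivations, which holds because $\curv$ and $\tors$ are holonomy-invariant (a consequence of $\TT\curv=\TT\tors=0$); and the $(\m,\m,\m)$ case reduces to the first and second Bianchi identities of $\TT$, which acquire a purely pointwise, algebraic form precisely because $\TT\curv=0$ and $\TT\tors=0$. This exhibits $\g$ as a Lie algebra with built-in reductive decomposition $\g=\h\oplus\m$ and $\Ad(\h)\m\subseteq\m$.

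It remains to integrate this infinitesimal model into a genuine transitive action on $M$. I would realize each $\xi\in\g$ as a $\TT$-affine vector field, that is, one whose local flow preserves $\TT$: the $\m$-directions correspond to the $\TT$-geodesic transvections and the $\h$-directions to the holonomy rotations. Geodesic completeness of $\TT$ makes these flows complete, and simple connectedness of $M$ ensures that the resulting local transformations are globally defined and single-valued; this is the Cartan--Ambrose--Hicks gluing step. One thereby obtains a Lie group $G$ with Lie algebra $\g$ acting transitively on $M$, with isotropy algebra $\h$, so that $\g=\h\oplus\m$ is a reductive decomposition. Finally, since $K$ is $\TT$-parallel and every element of $G$ preserves $\TT$, hence preserves parallel transport, the group $G$ preserves $K$, as required. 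The main obstacle is exactly this last globalization: verifying that the infinitesimally defined transvections assemble into a well-defined global transitive group action, which is where completeness and simple connectivity enter essentially; by comparison the algebraic Bianchi--Jacobi verification, though indispensable, is routine.
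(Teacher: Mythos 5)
The paper itself contains no proof of this statement: it is imported verbatim from \cite{CC2022*}, so there is no in-paper argument to compare against; the closest internal material is Theorem~\ref{kow} (Kowalski's transvection-group theorem), which is exactly the tool the cited literature uses to close the reconstruction. Your outline is the classical Nomizu/Ambrose--Singer scheme, which is indeed the route of the cited source: the easy direction via the canonical connection of a reductive decomposition $\g=\h+\m$ is correct as written (invariant tensors are $\TT$-parallel, geodesics are orbits $\e(tX)\cdot p$, hence completeness), and your bracket on $\h\oplus\m$ with $[x,y]=\curv_p(x,y)-\tors_p(x,y)$, Jacobi reducing to the two Bianchi identities rendered algebraic by $\TT\curv=\TT\tors=0$, is the standard infinitesimal model --- these are precisely the identities the paper later takes as axioms for its infinitesimal cohomogeneity one models in Section~\ref{s:decom-CO1-AS}.

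Two steps need repair. First, your closing inference --- ``every element of $G$ preserves $\TT$, hence preserves parallel transport, hence preserves $K$'' --- is a non sequitur as stated: on flat $\R^n$ with the standard connection and $K$ a parallel vector field, every linear map is affine and intertwines parallel transports, yet moves $K$ unless its linear part fixes $K_p$. What actually saves the argument is the transvection property of the specific generators you construct: the $\m$-flows have tangent maps equal to $\TT$-parallel transports and the $\h$-flows have tangent maps in the holonomy group at $p$, and both fix the parallel tensor $K$ (since $\TT K=0$ implies $\tau(K_p)=K_q$ for any parallel transport $\tau$, and holonomy fixes $K_p$); a tensor parallel for $\TT$ and equal to $f_*K$ at one point equals it everywhere. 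You should make this explicit rather than appeal to affineness alone. Second, reductivity: the bracket relation $[\h,\m]\subseteq\m$ gives $\Ad$-invariance of $\m$ only under the identity component of the isotropy group; for the possibly disconnected isotropy $H$ one needs a further argument, and the standard fix is exactly Theorem~\ref{kow}: taking $G=\mathrm{Tr}(M,\TT)$, transitivity on each holonomy subbundle yields the reductive presentation $M=G/H$ with $G$ effective, with $H$ closed because it is an isotropy group of a smooth action. With these two patches your argument is the standard one and is sound; the integration step you sketch (complete affine vector fields on a $\TT$-complete manifold, monodromy killed by simple connectedness, Palais-type integration of the $\g$-action) is where completeness and simple connectivity enter, just as you say.
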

The topological conditions on $M$ as well as the completeness of the canonical connection $\tilde\nabla$ are associated with the global nature of the action. When these conditions are weakened so that a manifold satisfies~\eqref{CO1.eq:AS-equations} only (that is, we have a so-called AS-manifold) there is a local version of the result as follows.
\begin{theorem}\cite[Thm.~3.7]{CC2022*}
	Let $M$ be a differentiable manifold with a geometric structure defined by a tensor $K$. Then the following assertions are equivalent:
	\begin{enumerate}
		\item The pair $(M,K)$ is a reductive locally homogeneous space, associated with a Lie pseudo-group $\G$ of local transformations of $M$ preserving $K$.
		\item  There exists a connection $\TT$ such that:
		\begin{equation*}
			\TT \curv = 0, \quad \TT \tors = 0, \quad \TT K = 0,
		\end{equation*}
		where $\curv$ and $\tors$ are the curvature and torsion of $\TT$, respectively.
	\end{enumerate}
\end{theorem}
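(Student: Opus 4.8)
The two implications require quite different tools, so I would prove them separately, and I expect all the real work to be in (2) $\Rightarrow$ (1). For (1) $\Rightarrow$ (2) the plan is to take $\TT$ to be the \emph{canonical connection} attached to the reductive structure: the reductive decomposition assigns to each point a complement to the isotropy algebra, and this data patches into a $\G$-invariant linear connection on $M$. The decisive classical fact is that, with respect to such a canonical connection, a tensor field is $\TT$-parallel if and only if it is invariant under the (locally transitive) pseudo-group $\G$. Now $\TT$ is itself $\G$-invariant, so its torsion $\tors$ and curvature $\curv$ are $\G$-invariant; and $K$ is $\G$-invariant by hypothesis. Hence all three are $\TT$-parallel, which is exactly $\TT\curv=\TT\tors=\TT K=0$.

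For (2) $\Rightarrow$ (1) the plan is to build a reductive infinitesimal model at a point and then integrate it. Fix $p\in M$ and let $\h\subseteq\gl(T_pM)$ be the holonomy algebra of $\TT$ at $p$. Because $\curv$ is $\TT$-parallel, parallel transport identifies all curvature operators with those at $p$, so the Ambrose-Singer holonomy theorem gives $\h=\Span\{\curv_p(x,y):x,y\in T_pM\}$. On $\g=\h\oplus\m$ with $\m=T_pM$ I would define a bracket by
\begin{equation*}
[A,B]_\g=AB-BA,\qquad [A,x]_\g=Ax,\qquad [x,y]_\g=-\curv_p(x,y)-\tors_p(x,y),
\end{equation*}
for $A,B\in\h$ and $x,y\in\m$, using $\curv_p(x,y)\in\h$ and $\tors_p(x,y)\in\m$. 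This is modelled on the bracket of a reductive homogeneous space read off from the torsion and curvature of its canonical connection.

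The key algebraic step is to check that $[\cdot,\cdot]_\g$ is a Lie bracket, and this is where the hypotheses enter. Antisymmetry is immediate. For the Jacobi identity, the components with at least two entries in $\h$ reduce to the Jacobi identity of $\gl(\m)$; the component with one entry $A\in\h$ and two entries in $\m$ is precisely the statement that $\h$ preserves $\curv_p$ and $\tors_p$, which holds because $\TT\curv=0$ and $\TT\tors=0$ force $\curv_p,\tors_p$ to be invariant under holonomy. The purely $\m$-valued component becomes the first Bianchi identity and the $\h$-valued component the second Bianchi identity, each written with the covariant-derivative terms deleted thanks to $\TT\tors=0$, $\TT\curv=0$:
\begin{equation*}
\SC{x,y,z}\curv_p(x,y)z=\SC{x,y,z}\tors_p(\tors_p(x,y),z),\qquad \SC{x,y,z}\curv_p(\tors_p(x,y),z)=0.
\end{equation*}
Thus $(\g,\h,\m)$ is a reductive infinitesimal model on which $\curv_p$, $\tors_p$ and $K_p$ are $\h$-invariant tensors.

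The final and hardest step is to integrate this infinitesimal model into an honest Lie pseudo-group $\G$ of local transformations. The cleanest route I would follow is to pass to the frame bundle $\LM$, where $\TT$ and the tautological form give an absolute parallelism $(\theta,\omega)$ with structure equations
\begin{equation*}
d\theta=-\omega\wedge\theta+\Theta,\qquad d\omega=-\omega\wedge\omega+\Omega.
\end{equation*}
The hypotheses $\TT\tors=0$ and $\TT\curv=0$ say exactly that $\Theta$ and $\Omega$ have constant components in this parallelism, and $\TT K=0$ says that the components of $K$ are constant along the relevant orbit; a parallelism with constant structure functions is locally homogeneous, so its local symmetries form a Lie pseudo-group acting (locally) transitively downstairs and preserving $\TT$ and $K$. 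Transitivity on $M$ comes from $\m=T_pM$ exhausting the tangent space. I expect the genuine difficulty to lie entirely here: one must show the infinitesimal automorphisms assemble, via development along curves, into bona fide local diffeomorphisms, the required path-independence being guaranteed precisely by the vanishing of the three covariant derivatives. The complete, globally transitive statement is then the content of the cited Theorem~2.2, recovered when $\TT$ is complete and $M$ is simply connected.
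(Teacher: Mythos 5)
The paper does not actually prove this statement: it is imported verbatim from \cite[Thm.~3.7]{CC2022*}, and the only hint given about its proof is the remark immediately following it, namely that the Lie pseudo-group acting on $M$ is the Lie pseudo-group of local affine \emph{transvections} of $\TT$. Measured against that intended argument, your proposal is correct in outline but takes a genuinely different route. You follow the classical Nomizu--Singer path: build the infinitesimal model $\g=\h\oplus\m$ with $\m=T_pM$, $\h$ the holonomy algebra (which equals $\mathrm{span}\{\curv_p(x,y)\}$ precisely because $\TT\curv=0$), define the bracket from $-\curv_p-\tors_p$, and verify Jacobi via holonomy-invariance of $\curv_p,\tors_p$ together with the two Bianchi identities with the covariant terms deleted --- all of which you state correctly --- then integrate using the absolute parallelism $(\theta,\omega)$ on the frame bundle. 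The cited proof instead constructs the pseudo-group directly: $\TT\curv=\TT\tors=0$ make parallel transports along curves extend to local affine transvections, $\TT K=0$ makes them preserve $K$, and reductivity is read off the holonomy subbundle; this concrete identification of $\G$ as the transvection pseudo-group is what the present paper then exploits repeatedly (Theorem~\ref{kow}, and the leaf-by-leaf transvection arguments in Theorems~\ref{CO1.teorema principal} and~\ref{CO1.teorema local}), whereas your route yields the explicit Lie algebra and localizes all the hypotheses into finitely many algebraic identities. Two imprecisions in your sketch, neither fatal: the components of $\Theta$ and $\Omega$ in the parallelism are constant only on a holonomy subbundle $P(u)\subset\LM$ (on all of $\LM$ they are merely $\GL(n,\R)$-equivariant along the fibers), so the ``constant structure functions'' argument must be run on $P(u)$, which is also where transitivity and the reductive complement live; and in (1)$\Rightarrow$(2) the equivalence ``$\TT$-parallel iff $\G$-invariant'' holds for the canonical connection only in the direction you actually use (invariant $\Rightarrow$ parallel), since a parallel tensor is holonomy-invariant at a point but the isotropy may be strictly larger than the holonomy group. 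Relatedly, your construction gives reductivity only at the Lie-algebra level ($[\h,\m]\subset\m$), while the definition used in this paper requires $\Ad(H(p_0))$-invariance of $\m$; this gap closes because the linear isotropy of the transvection pseudo-group is essentially the (connected) restricted holonomy group, but it deserves a sentence in a complete write-up.
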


In the proof of this last result, one shows that the Lie pseudo-group that is acting on $M$ is the Lie pseudo-group of local affine transvections of $\TT$. Recall that a transvection of a manifold endowed with a connection $(M, \TT)$ is a diffeomorphism $f\colon M \to M$ such that the \emph{natural lift} 
\begin{equation*}
	\begin{alignedat}{2}
		\tilde{f} \colon \LM &\to \LM \\
		u & \to f_{*,\pi(u)} \circ u.
	\end{alignedat}
\end{equation*}
to the frame bundle $\LM$ preserves every holonomy subbundle $P(u)\subset \LM$, i.~e., $\tilde{f}(P(u))\subset P(u)$ for all $u\in \LM$. The group $\mathrm{Tr}(M, \TT)$ of transvections is a subgroup of the group of affine transformations. Therefore, an affine transformation $f$ belongs to $\mathrm{Tr}(M, \TT)$ if and only if for every point $p \in M$ there is a piecewise smooth curve $\alpha$ joining $p$ with $f(p)$ such that the tangent map $f_* \colon T_pM \to T_{f(p)}M$ coincides with the parallel transport along $\alpha$.

\begin{theorem}{\cite[Thm.~I.25]{KK1980}}
	\label{kow} 
    Let $(M, \TT)$ be a connected manifold with an affine connection. Then the following two conditions are equivalent:
	\begin{itemize}
        \item The transvection group $\mathrm{Tr}(M, \TT)$ acts transitively on each holonomy bundle $P(u) \subset \LM$ and, in particular, it acts transitively on $M$.
        \item $M$ can be expressed as a reductive homogeneous space $G/H$ with respect to a reductive decomposition $\g = \h + \m$, where $G$ is effective on $M$, and $\TT$ is the canonical connection of $G/H$.
	\end{itemize}
    More precisely, if the first condition is satisfied, then we can write $M=G/H$ with $G=\mathrm{Tr}(M, \TT)$.
\end{theorem}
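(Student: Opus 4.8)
The plan is to prove the two implications separately, the crux throughout being a rigidity property of affine maps: since $M$ is connected, an affine transformation of $(M,\TT)$ is completely determined by its $1$-jet at a single point, i.e.\ by the pair $(f(p),f_{*,p})$. This is what makes the passage between the group and the holonomy bundle rigid enough to pin down a reductive structure.

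For $(1)\Rightarrow(2)$, I fix a frame $u_0\in\LM$, set $o=\pi(u_0)$, and let $P=P(u_0)$ be its holonomy bundle, a principal bundle over $M$ with structure group the holonomy group $\Phi=\Phi(u_0)$. Put $G=\mathrm{Tr}(M,\TT)$ and consider the orbit map $\varepsilon\colon G\to P$, $g\mapsto\tilde g(u_0)$. By hypothesis $\varepsilon$ is surjective; it is injective because $\tilde g(u_0)=u_0$ forces $g(o)=o$ and $g_{*,o}=\id$, whence $g=\id$ by the rigidity above. Thus $G$ acts simply transitively on $P$, and (invoking that $\mathrm{Aff}(M,\TT)$ is a Lie group and $\mathrm{Tr}(M,\TT)$ a Lie subgroup) $\varepsilon$ identifies $G$ with $P$ as smooth principal $\Phi$-bundles over $M=G/H$ — compatibly with the principal actions, since the natural lift is $\mathrm{GL}$-equivariant — where $H=G_o$ is the isotropy at $o$. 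Under $\varepsilon$ the subgroup $H$ is carried to the fiber $\pi^{-1}(o)\cap P$, so $H\cong\Phi$.

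Next I transport the principal connection of $P$ to $G$ via $\varepsilon$ and let $\m\subset\g$ be its horizontal subspace at the identity, with $\h$ the isotropy subalgebra (the vertical subspace). Because every $g\in G$ is affine, $\TT$ is $G$-invariant, so the induced horizontal distribution on $G$ is left-invariant; because it is a principal connection it is right-$H$-invariant. Comparing the two invariances at $e$ gives $L_{h*}\m=R_{h*}\m$ for $h\in H$, hence $\Ad(H)\m=\m$, which is the reductive decomposition $\g=\h\oplus\m$. Finally $\TT$ is the unique $G$-invariant connection whose horizontal space is the $\m$-distribution, which is exactly the canonical connection of $(G/H,\m)$; effectiveness of $G$ is immediate since transvections are genuine diffeomorphisms. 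This yields $(2)$ together with the refinement $M=G/H$ with $G=\mathrm{Tr}(M,\TT)$. For $(2)\Rightarrow(1)$, I start from $M=G/H$ reductive with $\TT$ its canonical connection, recalling that $G$ then acts by affine transformations. The one-parameter groups $\exp(tX)$ with $X\in\m$ are transvections, since the canonical geodesic $t\mapsto\exp(tX)\cdot o$ has $(\exp(tX))_{*}$ equal to $\TT$-parallel transport along it. Using $\TT\curv=0$ and $\TT\tors=0$ together with the Ambrose-Singer holonomy theorem, one checks that these transvections and their products sweep out horizontal curves from $u_0$ filling all of $P(u_0)$, so $\mathrm{Tr}(M,\TT)$ acts transitively on each holonomy bundle, and a fortiori on $M$.

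The main obstacle is this second implication: transitivity on $M$ is easy, but upgrading it to transitivity on the full holonomy bundle $P(u_0)$ requires showing that the fiber direction — the holonomy group — is itself reached by transvections, which is where the relation between the parallel curvature, the bracket $[\m,\m]$, and the holonomy algebra must be used carefully. In the first implication the analogous delicate point is verifying that the distribution on $G$ obtained from $\TT$ is simultaneously left-$G$- and right-$H$-invariant, so that $\m$ is genuinely $\Ad(H)$-invariant and not merely a vector-space complement of $\h$.
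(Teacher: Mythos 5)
Your direction $(1)\Rightarrow(2)$ is sound and is essentially the standard argument (note the paper itself gives no proof of this statement — it is quoted from \cite{KK1980} — so there is nothing in-paper to compare against): simple transitivity of $\mathrm{Tr}(M,\TT)$ on $P(u_0)$ via the $1$-jet rigidity of affine maps, transport of the principal connection through the orbit map, and the left/right-invariance comparison yielding $\Ad(H)\m=\m$ are all correct, modulo the Lie-group technicalities (smoothness of $\mathrm{Tr}(M,\TT)$ and of the identification $G\cong P(u_0)$) that you explicitly flag and that Kowalski handles.

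The genuine gap is in $(2)\Rightarrow(1)$, and your own closing paragraph concedes it: the sentence ``one checks that these transvections and their products sweep out horizontal curves from $u_0$ filling all of $P(u_0)$'' \emph{is} the theorem in that direction, and it is asserted, not proved. Concretely, products of the maps $\exp(tX)$, $X\in\m$, realize parallel transport along broken $\TT$-geodesics only, so the orbit of $u_0$ under their lifts is exactly the set of frames obtained from $u_0$ by horizontal lifts of broken geodesics; an arbitrary $u\in P(u_0)$ differs from such a frame by an element of the holonomy group $\Phi(u_0)$, and you must show that this fiber part is also reached. Invoking the Ambrose--Singer holonomy theorem alone does not do this: it identifies the holonomy algebra, but one still needs (i) $\TT\curv=0$, so that parallel transport preserves $\curv$ and the holonomy algebra equals the span of the curvature endomorphisms at the single point $o$; (ii) the canonical-connection identity $\curv_{XY}Z=-[[X,Y]_{\h},Z]$, which places that span inside the image of $[\m,\m]_{\h}$ under the linear isotropy representation, hence inside the Lie algebra of the isotropy subgroup of the group generated by $\exp\m$, all of whose elements are transvections; and (iii) an open--closed argument: the lifted orbit of $u_0$ is then a submanifold of $P(u_0)$ of full dimension, so it is open, all such orbits are open, and connectedness of $P(u_0)$ forces the orbit to be everything. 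Steps (i)--(iii) are the heart of the implication (they are precisely ``the relation between the parallel curvature, the bracket $[\m,\m]$, and the holonomy algebra'' you name as the main obstacle), and without them the proposal does not establish transitivity on the holonomy bundle, only on $M$.
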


We deduce from the previous theorem that, on a reductive homogeneous manifold $M$, for any piecewise smooth curve $\alpha$ joining $p$ with $q$, there exists a global transvection $f$ such that 
$f(p) = q$
and the tangent map $f_* \colon T_pM \to T_qM$ coincides with the parallel transport along $\alpha$.

\section{Main result: global version}\label{s:CO1-AS-Theorem}

A Riemannian manifold $(M,g)$ is said to be a regular cohomogeneity one Riemannian manifold if there is a closed subgroup of isometries $G\subset \mathrm{Isom}(M,g)$ such that $M/G$ is connected and every orbit is principal of codimension one. We refer the reader to~\cite{BCO2016} for an introduction to cohomogeneity one actions.

\begin{lemma} 
    \label{CO1.lem:200}
    Let $(M,g)$ be a connected Riemannian manifold equipped with a vector field $\xi \neq 0$. Let $\TT$ be an affine connection such that, 
    \begin{equation*}
        \TT \xi = 0, \quad \TT_X g = 0,\quad \tors(X, Y) \in \mathcal{D}, \quad \forall\, X,\, Y  \in \mathcal{D},
    \end{equation*}
    where $\mathcal{D} = \{X :\: g(X,\xi) = 0 \}$ and $\tors$ is the torsion of $\TT$. Then, the distribution $\mathcal{D}$ is integrable, and the leaves are totally geodesic submanifolds with respect to $\TT$.
\end{lemma}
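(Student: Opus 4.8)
The plan is to derive both conclusions from a single observation: for any two vector fields $X,Y$ tangent to $\mathcal{D}$ one has $g(\TT_X Y,\xi)=0$, that is, $\TT_X Y\in\mathcal{D}$. First I would record a preliminary remark that justifies the whole setup: the condition $\TT\xi=0$ makes $\xi$ a $\TT$-parallel field, so since $\xi\neq 0$ at one point and $M$ is connected, parallel transport (an isomorphism on each fibre) forces $\xi$ to be nowhere vanishing. Hence $\mathcal{D}=\{X: g(X,\xi)=0\}$ is a smooth distribution of constant rank $\dim M -1$, and it makes sense to speak of hypersurface leaves.

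For the key observation, I would fix $X,Y$ tangent to $\mathcal{D}$, so that $g(Y,\xi)=0$, and differentiate along $X$, using that $\TT$ is metric ($\TT_X g=0$) together with $\TT_X\xi=0$:
\begin{equation*}
0=X\big(g(Y,\xi)\big)=g(\TT_X Y,\xi)+g(Y,\TT_X\xi)=g(\TT_X Y,\xi).
\end{equation*}
Thus $\TT_X Y$ is orthogonal to $\xi$, i.e. $\TT_X Y\in\mathcal{D}$, and symmetrically $\TT_Y X\in\mathcal{D}$.

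To obtain integrability I would then invoke the Frobenius theorem, so it suffices to check $[X,Y]\in\mathcal{D}$ for $X,Y\in\mathcal{D}$. Expressing the bracket through the torsion,
\begin{equation*}
[X,Y]=\TT_X Y-\TT_Y X-\tors(X,Y),
\end{equation*}
each of the three terms lies in $\mathcal{D}$: the first two by the key observation, and the last by the standing hypothesis $\tors(X,Y)\in\mathcal{D}$. Hence $[X,Y]\in\mathcal{D}$ and $\mathcal{D}$ is integrable. Finally, on a leaf $L$ the tangent space is exactly $\mathcal{D}$, and (since $\TT_X Y|_p$ depends on $Y$ only along a curve through $p$, which may be chosen in $L$) the key observation $\TT_X Y\in\mathcal{D}=TL$ for all $X,Y\in\mathcal{D}$ says precisely that the normal component of $\TT_X Y$ vanishes; that is, $\TT$ restricts to a connection on each leaf and every leaf is totally geodesic with respect to $\TT$.

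There is no serious obstacle here. The only points requiring care are recording that $\xi$ is nowhere zero, so that $\mathcal{D}$ is an honest corank-one distribution and its leaves are hypersurfaces, and keeping the logical order correct: the autoparallel identity $\TT_X Y\in\mathcal{D}$ must be established before one can even speak of totally geodesic leaves, while it simultaneously furnishes two of the three terms needed for integrability. All three assertions therefore collapse onto the single metric-compatibility identity above, with the torsion hypothesis entering only to control the bracket.
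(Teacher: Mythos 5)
Your proof is correct and follows essentially the same route as the paper: the identical metric-compatibility computation (using $\TT_X g=0$ and $\TT\xi=0$) gives $g(\TT_X Y,\xi)=g(\TT_Y X,\xi)=0$, and the torsion hypothesis then controls $[X,Y]=\TT_XY-\TT_YX-\tors(X,Y)$ for Frobenius. The only (harmless) variation is the final step: the paper deduces total geodesy from the constancy of $g(\xi,\gamma'(t))$ along $\TT$-geodesics, while you use the equivalent autoparallel characterization $\TT_X Y\in\mathcal{D}$, and you additionally record the worthwhile observation, left implicit in the paper, that $\TT\xi=0$ forces $\xi$ to be nowhere vanishing, so $\mathcal{D}$ is a genuine corank-one distribution.
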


\begin{proof}
	We take two vector fields $X$, $Y$ such that $g(X,\xi) = 0$ and $g(Y, \xi) = 0$. Taking derivatives of these expressions, and using $\TT_X g = 0$ and $\TT \xi = 0$, we have that
	\begin{equation*}
		0 = g (\TT_Y X, \xi) + g(X, \TT_Y \xi)=g(\TT _Y X,\xi), \qquad 0 = g (\TT_X Y, \xi) + g(Y, \TT_X \xi)=g(\TT _X Y,\xi).
	\end{equation*} 
	Then $g([X,Y], \xi) = g(- \tors(X,Y) , \xi) = 0$, so that $\mathcal{D}$ is integrable. From the Frobenius Theorem, we have a foliation whose leaves are orthogonal to the vector field $\xi$. Finally, for any geodesic $\gamma (t)$ of $\tilde{\nabla}$, the function $g(\xi,\gamma'(t))$ is constant and the leaves are thus totally geodesic.
\end{proof}

\begin{theorem}\label{CO1.teorema principal}
	Let $(M,g)$ be a connected, simply-connected and complete Riemannian manifold. Then the following two are equivalent:
	\begin{enumerate}
		\item $(M,g)$ is a regular cohomogeneity one Riemannian manifold.
		\item There exists a complete linear connection $\TT$ and a vector field $\xi$ with $g (\xi, \xi) = 1$, such that,
		\begin{equation}\label{CO1.eq:ASCO1}
			\begin{alignedat}{2}
				\TT \curv &=  0, \quad \TT \tors = 0,  \quad \TT \xi = 0,\quad \\
				\TT_X g &= 0,\quad \tors(X, Y) \in \mathcal{D},\quad \forall\, X,\,Y  \in \mathcal{D},
			\end{alignedat}
		\end{equation}
        where $\mathcal{D} = \{X :\: g(X,\xi) = 0 \}$ is an integrable distribution whose leaves are embedded and closed, and $\curv$ and $\tors$ are the curvature and torsion of $\TT$.
	\end{enumerate}
\end{theorem}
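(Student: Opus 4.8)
The plan is to prove both implications by identifying the orbits of the cohomogeneity one action with the leaves of $\mathcal{D}$, and by matching the connection $\TT$ with the canonical connection adapted to this foliation.

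For the implication $(1)\Rightarrow(2)$, I would first note that, since the action is regular of cohomogeneity one on a simply-connected $M$, the orbit space $M/G$ is a connected $1$-manifold without boundary, hence diffeomorphic to $\R$; in particular the principal orbits foliate $M$ and the unit normal field $\xi$ is globally defined. I would then construct $\TT$ as the canonical connection of this structure: fix a unit-speed normal geodesic $\gamma$ meeting every orbit, equip each orbit $G/G_{\gamma(t)}$ with a reductive decomposition depending smoothly on $t$, and let $\TT$ be the unique $G$-invariant connection whose restriction to each orbit is the associated canonical connection and for which $\xi$ is parallel along $\gamma$. By $G$-invariance the tensors $\curv$, $\tors$ and $\xi$ are constant along each orbit, while the smooth family of decompositions along $\gamma$ makes them $\TT$-parallel in the $\xi$-direction; together these give $\TT\curv=0$, $\TT\tors=0$ and $\TT\xi=0$. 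The conditions $\TT_X g=0$ and $\tors(X,Y)\in\mathcal{D}$ for $X,Y\in\mathcal{D}$ are inherited from the canonical connections of the orbits, and completeness of $\TT$ follows from completeness of $g$.

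For $(2)\Rightarrow(1)$, Lemma~\ref{CO1.lem:200} gives that $\mathcal{D}$ is integrable with leaves totally geodesic for $\TT$, so the restriction $\TT^L$ to a leaf $L$ is well defined. The hypotheses specialize to $\TT^L g_L=0$, $\TT^L\curv^L=0$, $\TT^L\tors^L=0$, so by the Ambrose-Singer Theorem each leaf is a reductive homogeneous Riemannian manifold, homogeneous under its transvection group $\mathrm{Tr}(L,\TT^L)$. Independently, applying \cite[Thm.~2.2]{CC2022*} with $K=\xi$ to the complete connection $\TT$ on the simply-connected $M$ yields a group $\bar G=\mathrm{Tr}(M,\TT)$ acting transitively on $M$ and preserving $\TT$ and $\xi$. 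Any isometry in $\bar G$ preserves $\xi$ and $g$, hence preserves $\mathcal{D}$ and maps leaves to leaves; since the leaves are embedded and closed, $M/\mathcal{D}\cong\R$, and to obtain a regular cohomogeneity one action it remains only to produce, inside $\Isom(M,g)$, isometries that fix a given leaf and act transitively on it.

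This last point is the main obstacle, and it is where the transverse data enters. At the base point the transvections of $\TT^L$ realizing the homogeneity of $L$ are parallel transports along curves in $\mathcal{D}$; because $\TT_X g=0$ for $X\in\mathcal{D}$, such transports preserve both $g$ and $\xi$ pointwise, so each leaf transvection is a genuine isometry of $g_L$ and maps $L$ to itself. The difficulty is to upgrade these leafwise isometries to global elements of $\Isom(M,g)$: since $\TT g\ne 0$ in the $\xi$-direction (this normal part is exactly the cohomogeneity one structure tensor $S=\T-\TT$ studied in the later sections), an affine transformation of $\TT$ that preserves $g$ along one leaf need not preserve it transversally. I expect to resolve this using that $\curv$ and $\tors$ are $\TT$-parallel in the $\xi$-direction as well: this rigidity forces the second fundamental form of the leaves and its normal derivatives to be invariant under the leaf isometries, so each leaf isometry extends uniquely to an affine map of $\TT$ preserving $g$ along the whole normal geodesic, and hence on $M$. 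These extensions lie in $\bar G\cap\Isom(M,g)$, fix each leaf and act transitively on it, so the group they generate realizes $(M,g)$ as a regular cohomogeneity one Riemannian manifold.
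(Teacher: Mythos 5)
Your proposal has the right skeleton in both directions, but each direction contains a genuine gap. In $(1)\Rightarrow(2)$, the missing idea is the paper's enlargement of the symmetry group: the flow of $\xi$ generates a group $A\cong\R$ or $\mathbb{S}^1$ commuting with $G$, so that $A\times G$ acts \emph{transitively} on $M$, and $\TT$ is taken to be the canonical connection of the reductive decomposition $\a+\g=\h+(\a+\m)$ of this single transitive (non-Riemannian) homogeneous structure. Your patchwork construction --- leafwise canonical connections plus ``$\xi$ parallel along $\gamma$'' --- does not determine $\TT$ in the mixed directions, and, more seriously, a ``smooth family of reductive decompositions along $\gamma$'' does not yield $\TT\curv=0$ and $\TT\tors=0$ in the $\xi$-direction: for that you need the flow to be affine for $\TT$, i.e.\ $[\h,\a]=0$ and the invariance of $\m$ under $\Ad(\varphi_{t_0}\circ L_{g^{-1}})$, which is precisely what the transitive $A\times G$-action secures (a smoothly varying family of complements can twist arbitrarily in $t$ and destroy parallelism of $\curv$ and $\tors$ along $\xi$). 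Your claim that completeness of $\TT$ follows from completeness of $g$ is also false: $\TT$ is not a metric connection ($\TT g\neq 0$ in the $\xi$-direction), so metric completeness gives nothing; the paper gets completeness because $\TT$-geodesics are the orbit curves $\exp(tX)\cdot p$ of one-parameter subgroups of $A\times G$, defined for all $t$. (Your opening claim $M/G\cong\R$ is not needed and is sidestepped in the paper, which allows $A\cong\mathbb{S}^1$ and instead proves well-definedness of the extension of $\xi$ by an isotropy argument.)

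In $(2)\Rightarrow(1)$ you correctly reach $\bar G=\mathrm{Tr}(M,\TT)$ via \cite{CC2022*} and correctly identify the crux --- promoting leafwise transvections to global isometries of $(M,g)$ --- but you then leave it unresolved (``I expect to resolve this\dots''), and the rigidity mechanism you sketch does not obviously close it: $\TT\curv=0$ in the $\xi$-direction constrains $\curv$, not $g$, and nothing in your argument forces an affine map of $\TT$ that is isometric along one leaf to preserve $g$ transversally. The paper settles this at the Lie algebra level, with no extension argument at all: it sets $\m=\{X\in\bar\m:\: g(X,\psi^{-1}\xi)=0\}$ and $\h=\Span\{[X,Y]_{\bar\h}:\: X,Y\in\m\}$, checks that $\g=\h+\m$ is a subalgebra of $\bar\g$, and shows the corresponding connected subgroup $G\subset\bar G$ acts by isometries because (i) for $X\in\m$ the differential $(L_{\exp(tX)})_*$ \emph{is} parallel transport along a $\TT$-geodesic contained in a leaf, hence preserves $g$ by $\TT_Xg=0$, and (ii) the identity $[X,Y]_{\bar\h}=\curv_{XY}$ together with $\TT_Xg=0$ gives $\curv_{XY}\cdot g=0$, so the isotropy transvections $L_{\exp(tH)}$, $H\in\h$, are isometries as well. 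Finally, your proposal omits the regularity step entirely: one must still show the action is proper with all orbits principal, which the paper does using closedness of the orbits, the theorem of \cite{D2008} applied to the closure of $G$ in $\Isom(M)$, and the constancy of isotropy groups along the flow of $\xi$.
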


\begin{proof} 
    \textbf{The proof of (1) implies (2).}  Let $G$ be a Lie group of isometries of $M$ acting on $M$ such that the orbits define a cohomogeneity one foliation. Let $p$ be a fixed point in $M$. As the orbits are principal, we can define a equivariant well-defined vector on the leaf $G\cdot p$ by
    \[
    \xi_{g\cdot p} = g \cdot \xi_p,
    \]
    where $\xi_p$ is a fixed unit normal vector at $p$, see~\cite[Sec.~2.1.8]{BCO2016}.

    Furthermore, it is well known that: 
 
    \begin{lemma}
        \label{lemma:orthogonal-geodesic}
        For any $p\in M$, the geodesic $\mathrm{exp}(t\xi_p)$ intersects all the leaves of the foliation orthogonally.
    \end{lemma}

    Now we consider the differentiable map $\varphi \colon \R \times (G \cdot p) \to M$ defined by
    \begin{equation*}
        \varphi_t (g \cdot p) = \exp _{g\cdot p}(t \xi_{g\cdot p}).
    \end{equation*}
    From the lemma above, this map is surjective. On the other hand, since $G$ sends geodesics to geodesics and $g \cdot \xi_q=\xi_{g\cdot q}$ because $G$-orbits are principal, we have that 
    \[
        g\cdot \varphi_t(q) = \varphi_t(g \cdot q), \qquad \forall q \in G\cdot p.
    \]
    In particular, since all orbits are principal (and thus, all isotropy groups are conjugate),
    \[
        \Isot_G (p) =  \Isot_G( \varphi_t (p)),
    \]
    where the isotropy $\Isot_G(q)$  of $q\in M$ is the Lie subgroup $\{ h \in G :\: h \cdot q  = q\}$.

    The vector field $\xi$ initially defined on $G\cdot p$ only, can now be extended to $M$ by simply taking derivatives of $\partial \varphi /\partial t$, $t\in \mathbb{R}$. This is well-defined since if we had $q=\varphi_{t_1}(p)=\varphi_{t_2}(g \cdot p)$ with 
    \begin{equation}
    \label{Co1.eq:3.3}
        \frac{\partial \varphi _t (p)}{\partial t}(t_1)=-\frac{\partial \varphi _t(g\cdot p)}{\partial t}(t_2),
     \end{equation}
    then
    \[
    	\varphi_{\frac{t_1+t_2}{2}} (p) = \varphi_{\frac{t_1+t_2}{2}} (g \cdot p) = g \cdot\varphi_{\frac{t_1+t_2}{2}} (p),
    \]
    that is, $g\in \mathrm{Isot}_G(\varphi_{\frac{t_1+t_2}{2}} (p))=\mathrm{Isot}_G(p)$. 
    But the uniqueness of geodesics would make~\eqref{Co1.eq:3.3} impossible for $g\cdot p=p$. 

	The orbits of $\xi$ are all of the same type. There is thus a group $A= \mathbb{R}$ or $\mathbb{S}^1$, and we have a transitive action on the left
	\begin{equation*}\label{Eq. acción global}
		\begin{alignedat}{2}
			(A \times G) \times M &\to M \\
			((\varphi_t , g), p) &\longmapsto \varphi_t (g\cdot p) = g \cdot \varphi_t (p).
		\end{alignedat}
	\end{equation*}

    Let $\mathrm{Isot}(p) = \{ (\varphi_t, g) \in A \times G:\: \varphi_t(g \cdot p) = p \}$ denote the isotropy of $p$ by the action of $A\times G$. 
    The subgroup $D=\{t\in A:\: \varphi_t(p)\in G\cdot p\}\subset A$ is closed and it is generated by the smallest $t_0$ such that $\varphi_{t_0} (p) = g\cdot p$. Therefore, $\mathrm{Isot}(p) = \mathcal{L}((\varphi_{t_0},g^{-1})) + \mathrm{Isot}_G (p)$, where $\mathcal{L}((\varphi_{t_0},g^{-1}))$ is the subgroup generated by $(\varphi_{t_0},g^{-1})$ in $A\times G$ and $\mathrm{Isot}_G (p)$ is the isotropy group of $p$ by the action of $G$. As $\mathcal{L}((\varphi_{t_0},g^{-1}))$ is discrete, the Lie algebra $\mathrm{Isot}(p)$ is the Lie algebra $\mathfrak{h}$ of $\mathrm{Isot}_G(p)$. We have that $[\h, \a] = 0$ because $G$ leaves $\xi$ invariant. 

    Since each orbit is a reductive homogeneous manifold, we have a connection in every leaf such that the geodesics of the connection are the curves $\exp (tX) \cdot p$ for every $X\in \m$ and $p\in M$. Moreover, there is a decomposition $\a + \g = \h + (\a + \m)$ and the subspace $\a +\m$ is $\Ad (\mathrm{Isot}_G (p))$-invariant. We have that 
    $\varphi_{t_0} \circ L_{g^{-1}}$ 
    is a global diffeomorphism preserving $\xi$ and the horizontal distribution defined by the connection above, i.~e., the subspace $\m$ is 
    $\Ad (\varphi_{t_0} \circ L_{g^{-1}})$-invariant. 
    It follows that $M$ is a reductive homogeneous manifold, and the canonical connection $\TT$  associated with the reductive decomposition above (see~\cite[Vol.~2, Ch.~X]{KN1963}) satisfies
	\begin{equation*}
		\TT \curv = 0, \quad \TT \tors = 0.
	\end{equation*}
	Since the Lie group $A \times G$ leaves $\xi$ invariant, $\TT\xi =0$ (see~\cite[p.~39, Prop.~1.4.15]{CC2019}).	

    To show that $\TT _X g = 0$ for all $X \in \mathcal{D}$, we consider, for $X \in \m$, the curve $\gamma(t) = \exp(t X) \cdot p$. The parallel transport along the curve $\gamma$ is the linear map given by the differential $(L_{\exp(tX)})_*\colon T_p M \to T_{\gamma(t)}M$ \cite[Ch.~X, Cor.~2.5]{KN1963}. As this map preserves the metric, we have that $\TT_{\gamma'} g = 0$. Finally, if we take $X$ and $Y$ as two vector fields tangent to the leaves, since $\TT_X g = 0$, we have $0 = g(\TT_XY, \xi) + g(Y, \TT_X \xi)=g(\TT_X Y,\xi)$. Hence the leaves are totally geodesic with respect to $\TT$ and therefore $ g(\tors (X,Y), \xi) = 0$.

\noindent \textbf{The proof of (2) implies (1).} 

    The completeness of $\TT$ together with conditions $\TT \tilde{R}=0$, $\TT \tilde{T}=0$ and $\TT \xi=0$ implies (see~\cite[p.~4, Thm.~2.2]{CC2022*})
    that $M$ is a homogeneous manifold $M = \bar{G}/\bar{H}$ (non-necessarily Riemannian), with reductive decomposition $\bar{\g} = \bar{\h} + \bar{\m} $, $\TT$ is the canonical connection and the vector field $\xi$ is $\bar{G}$-invariant. The Lie group $\bar{G}$ that acts transitively and effectively on $M = \bar{G}/\bar{H}$ is the Lie group of transvections of $\TT$ (see Theorem~\ref{kow}). Since $\TT_X g = 0$, for $X \in \mathcal{D}$, $g$ is invariant along parallel transport by curves orthogonal to $\xi$. 
	
	Let $\bar{\g}$ be the Lie algebra of $\bar{G}$ with reductive decomposition $\bar{\g} = \bar{\h} + \bar{\m}$. We can identify $\bar{\m}$ with $T_{e\bar{H}}M$ via, $\psi(X) = \dt \exp(tX) \cdot e\bar{H}$. As a consequence of this identification, we can endow $\bar{\m}$ with a Riemannian metric structure $g$ given by $g (X,Y) =  g|_{e\bar{H}} (\psi X, \psi Y)$. Then, we can consider the subspace 
    \[
    \m = \{ X\in \bar{\m} :\: g(X, \psi ^{-1} \xi) = 0\} \subset \bar{\m}
    \] 
    and the subalgebra $$\h = \mathrm{span} \{ [X,Y]_{\bar{\h}}:\: X,\, Y \in \m\} \subset \bar{\h}.$$  The algebra $\g =\h + \m$ is a Lie subalgebra of $\bar{\g}$, and $\h$ is a Lie subalgebra of $\bar{\h}$.	By~\cite[Vol.~2, p.~193, Cor.~2.5]{KN1963}, the geodesics of $\TT$ starting from $e\bar{H}$ have the expression $\exp(tX)\cdot e\bar{H}$ for all $X\in \m$ and all $t\in \R$. Indeed, parallel transport along such a geodesic is given by the differential map $(L_{\exp(tX)})_*$. Since $\TT_X g =0$, the transvection map $L_{\exp(tX)}$ is an isometry of $M$. From $[X,Y]_{\bar{\h}} = \curv_{XY}  = [\TT_X ,\TT_Y] - \TT_{[X,Y]}$ and $\TT_X g = 0$, we conclude $\curv_{XY} \cdot g = 0$. In particular, the transvection map $L_{\exp (tH)}$ associated with each $H \in \h$ is an isometry. Moreover, as $\curv_{XY} \cdot \xi = 0$ and $\curv_{XY} \cdot g = 0$, then $\m$ is an $\h$-module of $\g$ and every leaf is a reductive Riemannian homogeneous manifold.
	
	Let $G$ be the connected Lie subgroup of $\bar{G}$ associated with $\mathfrak{g}\subset \mathfrak{\tilde{g}}$. According to the above paragraph, $G$ is a group of isometries of $M$.
		
	\begin{lemma}
		$M$ is a regular cohomogeneity one manifold.
	\end{lemma}

    \begin{proof}
    	The Lie group $G$ preserves every leaf since its action preserves $g$ and $\xi$, and $G$ is connected. 
        Since $M$ is a connected homogeneous space, any two points $p$, $q \in M$ can be joined by a broken geodesic of $\TT$ such that each geodesic is an integral curve of $\xi$ or a $\TT$-geodesic in a leaf (we are using here the connectedness of $M$). Then, there exists  $L_g = L_{g_1} \circ \varphi_{t_1} \circ \ \dots \ \circ L_{g_m} \circ \varphi_{t_m}$ with $m\in \N$, $L_{g_i} \in G$ and $\varphi$ the flow of $\xi$ such that $L_g p = q$. As $\xi$ is invariant for every $L_{g_i}$, we have $L_g = \varphi_{t_1+ \ \dots \ +t_m} \circ L_{g_1} \circ \ \dots \ \circ L_{g_m}$. The integral curves of $\xi$ intersect all orbits. Therefore, to prove that $G$ acts transitively on every leaf, it is enough to prove this fact in only one leaf.
    		
        Let $N$ be the leaf of $e \bar{H}$. Since $N$ is a totally geodesic submanifold of $M$, by Lemma~\ref{CO1.lem:200}, every point $p\in N$ is connected to $e \bar{H}$ by a broken geodesic of $\TT$. As $G$ is connected and locally transitive on an open neighbourhood of $e \bar{H}$ (in the submanifold topology), $G$ acts transitively on $N = G/H$. As the orbits are closed, by~\cite[Thm.~5]{D2008}, we have that the closure $\tilde{G}$ of $G \subset \mathrm{Isom}(M)$ has the same orbits as $G$ and the action of $\tilde{G}$ is proper. 
        
        Finally, every orbit is principal since $\{ g \in \tilde{G}  :\: g \cdot e \bar{H} = e \bar{H}\} = \{ g \in \tilde{G} :\: g \cdot \varphi_t (e \bar{H}) =  \varphi_t (e \bar{H})\}$, for all $t\in \R$ and $\xi$ intersects all orbits. 
    \end{proof}

This finishes the proof of Theorem~\ref{CO1.teorema principal}.
\renewcommand*{\qedsymbol}{\(\blacksquare\)}\end{proof}
    

\begin{definition}
    A Riemannian manifold $(M,g)$ equipped with a unit vector field $\xi$ and a linear connection $\TT$ satisfying conditions~\eqref{CO1.eq:ASCO1} is called a \emph{cohomogeneity one Ambrose-Singer manifold} (CO1-AS-manifold for short). In this case, the $(1,2)$-tensor field $S = \T - \TT$ is called a \emph{cohomogeneity one structure}, where $\T$ is the Levi-Civita connection.
\end{definition}

\begin{remark}
    As it is well known, the same reductive homogeneous manifold may have different descriptions as a quotient $G/H$ as well as different reductive decompositions $\mathfrak{g}=\mathfrak{h}+\mathfrak{m}$. For instance, Euclidean spheres can be regarded as homogeneous manifolds exactly in the following ways (see~\cite{MS1943} and~\cite{AHL2023})
    \begin{equation*}
    \begin{alignedat}{111}
        \mathbb{S}^n &= \SO(n+1)/\SO(n), \quad \mathbb{S}^{2n+1}= \U(n+1)/\U(n), \\
        \mathbb{S}^{2n+1}&= \SU(n+1)/\SU(n),\\
        \mathbb{S}^{4n-1} &= \Sp(n)/\Sp(n-1), \quad \mathbb{S}^{4n-1}= \Sp(n+1)\Sp(1)/\Sp(n)\Sp(1),\\
        \mathbb{S}^{4n-1}&= \Sp(n)\U(1)/\Sp(n-1)\U(1),\\
        \mathbb{S}^6 &= G_2/\SU(3), \quad \mathbb{S}^{7}= \Spin(7)/\SU(3), \quad \mathbb{S}^{15}= \Spin(9)/\Spin(7).
    \end{alignedat}
    \end{equation*}
    Furthermore, for each of the quotients we may determine the set of all $G$-invariant metrics. Together with the choice of the complement $\mathfrak{m}$ to $\mathfrak{h}$, all these provide the collection of homogeneous tensors $S$.

    A similar situation happens in cohomogeneity one manifolds. For example, we might consider a warped product $M=\mathbb{R}\times_f \mathbb{S}^n$ for a smooth function $f:\mathbb{R}\to \mathbb{R}^+$ built with any of the previous metrics on the sphere. The action of the different groups $G$ obviously provides the same (trivial) foliation. 
\end{remark}

One advantage of AS or CO1-AS structures is that they can be used to distinguish different homogeneous or cohomogeneity one descriptions. With respect to CO1-AS structures, their classification is done up to equivalence in the following sense.

\begin{definition}
    Two regular cohomogeneity one manifolds $(M,g)$ and $(M',g')$ with the same group $G$ are called \emph{isomorphic} if and only if there exists a $G$-equivariant isometry $f \colon (M,g) \to (M',g')$.
\end{definition}

\begin{definition}
    Two cohomogeneity one structures $S$ in $(M,g)$ and $S'$ in $(M',g')$ are called \emph{isomorphic} if and only if there exists an isometry $f \colon (M,g) \to (M',g')$ that maps $S$ to $S'$.
\end{definition}

\begin{proposition} 
    Let $(M,g)$ and $(M',g')$ be two connected, simply-connected and complete $G$-regular cohomogeneity one manifolds with cohomogeneity one structures $S$ and $S'$, respectively. 
    If $S$ and $S'$ are isomorphic, then $(M,g)$ and $(M',g')$ are isomorphic.
\end{proposition}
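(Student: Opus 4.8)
The plan is to show that the isometry provided by the hypothesis already respects every piece of data used to reconstruct the acting group in the proof of Theorem~\ref{CO1.teorema principal}, so that it intertwines the two $G$-actions. By hypothesis there is an isometry $f\colon(M,g)\to(M',g')$ with $f_*S=S'$; the aim is to upgrade it, possibly after composition with an element of $G$, to a $G$-equivariant isometry. First I would note that $f$ is automatically affine for the two CO1-AS connections: any isometry preserves the Levi-Civita connections, $f_*\T=\T'$, so from $\TT=\T-S$ and $f_*S=S'$ we obtain $f_*\TT=\TT'$. Hence $f$ carries $\curv$, $\tors$ and, crucially, $\TT$-parallel transport along every curve of $M$ to the corresponding objects of $M'$.

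Next I would verify that $f$ preserves the normal field, $f_*\xi=\xi'$ (after possibly replacing $\xi$ by $-\xi$). This rests on the fact that $\xi$ is intrinsic to the pair $(g,\TT)$, equivalently to $(g,S)$: the distribution $\mathcal D$ is recovered from $S$ --- in the generic case as the kernel of the map $X\mapsto\TT_Xg$, and in general through the submodule of $S$ carrying the second fundamental form of the leaves (Sec.~\ref{s:decom-CO1-AS}) --- so that $f$ sends leaves to leaves and their unit normal $\xi$ to $\pm\xi'$.

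With $f$ preserving $\TT$, $g$ and $\xi$, the conclusion follows from the transvection description of $G$. Using the characterization of transvections by parallel transport recalled before Theorem~\ref{kow}, conjugation $\tau\mapsto f\circ\tau\circ f^{-1}$ carries the transvection group $\mathrm{Tr}(M,\TT)=\bar G$ isomorphically onto $\mathrm{Tr}(M',\TT')=\bar G'$. Since $f$ also preserves $g$ and $\xi$, this isomorphism sends the reductive data $\bar{\g}=\bar{\h}+\bar{\m}$, the subspace $\m=\{X\in\bar{\m}:g(X,\psi^{-1}\xi)=0\}$, and the subalgebra $\h=\Span\{[X,Y]_{\bar{\h}}:X,Y\in\m\}$ to the corresponding data on $M'$, hence the reconstructed subgroup $G\subset\bar G$ onto $G'\subset\bar G'$. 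Under the identification of these reconstructed groups with the common $G$, this is exactly the statement $f(g\cdot p)=g\cdot f(p)$, so $f$ is the required $G$-equivariant isometry.

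The main obstacle is the combination of the second step with this final identification: one must ensure that $f$ genuinely preserves $\xi$, rather than sending it to some other $\TT'$-parallel unit field, and that the induced isomorphism $\bar G\to\bar G'$ restricts to the identity on the common group $G$ rather than to a nontrivial automorphism. I expect to handle the former through the $S$-intrinsic description of the foliation, and the latter by composing $f$ with a suitable element of $G$ so that a chosen base point and its isotropy subgroup are matched; connectedness of $G$ then forces equivariance.
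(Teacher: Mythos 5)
Your skeleton is exactly the paper's proof: from $f_*S=S'$ and $f_*\T=\T'$ you get that $f$ is affine between $\TT=\T-S$ and $\TT'=\T'-S'$, and then you use the transvection description of the group. That is all the paper does: the group $G$ produced by Theorem~\ref{CO1.teorema principal} on $M$ is generated by global transvections of $\TT$ (Theorem~\ref{kow}), conjugation $F\mapsto f\circ F\circ f^{-1}$ carries these to transvections of $\TT'$, so $G$ acts on $M'$ \emph{through} $f$ and $f$ is $G$-equivariant for that transported action. In particular the paper never needs $f_*\xi=\pm\xi'$, nor any matching of the transported group with a group reconstructed on $M'$ from its own data; your insistence on both is where your proposal goes wrong.

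The genuine gap is your second step: $\xi$ is \emph{not} recoverable from $(g,S)$, and both mechanisms you propose fail on the paper's own examples in Section~\ref{s:examples}. For parallel hyperplanes in $\R^n$ one has $S=0$, hence $\TT=\T$, so $\TT g=0$ identically (your kernel criterion returns all of $TM$) and the leaves are totally geodesic, so the $\mathcal{II}$-component of $S$ vanishes as well; any parallel unit field $\xi$ produces the same $S$, and an isometry with $f_*S=S'$ (here, any isometry of $\R^n$) need not send $\xi$ to $\pm\xi'$. Similarly, the non-canonical structure $\bar S$ on $\RH(n)$ has a metric AS-connection, $\tilde{\bar{\nabla}}g=0$, so the kernel criterion degenerates there too. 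Your proposed repair also fails: in the flat example, composing $f$ with $g\in G=H\ltimes\R^{n-1}$ leaves $f_*\xi$ off the line $\R\xi'$ whenever it starts off it, since $H$ fixes $\xi'$ and translations do not change differentials; and ``connectedness of $G$ then forces equivariance'' is an expectation, not an argument. The fix is simply to delete your second step and the final identification: once $f$ is affine, the transported action of $G$ on $M'$ lands in the transvections of $\TT'$, which is precisely the cohomogeneity one action attached to $S'$, and equivariance of $f$ holds by construction --- this is the paper's (admittedly terse) proof, whose one-sentence core you already had.
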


\begin{proof}
    Let $f \colon (M,g) \to (M',g')$ be an isometry sending $S$ to $S'$. Then $f$ is an affine map from $\TT = \T - S$ to $\TT' = \T' - S'$, where $\T$ and $\T'$ are the Levi-Civita connections of $(M,g)$ and $(M',g')$, respectively. We now consider the Lie group $G$ generated by Theorem~\ref{CO1.teorema principal} applied to the first of the manifolds. It also acts on $M'$ through $f$. We know from Theorem~\ref{kow} that this Lie group is generated by the global transvections of the connection $\TT$. For every global transvection $F$ of $\TT$, $f \circ F \circ f^{-1}$ is a transvection map of $\TT'$. Therefore, $f$ is a $G$-equivariant map.
\end{proof}

\begin{proposition}
    Let $(M,g)$ and $(M',g')$ be two connected, simply-connected and complete $G$-regular cohomogeneity one manifolds. If $(M,g)$ and $(M',g')$ are isomorphic by $f \colon (M,g) \to (M',g')$ and $S$ is a cohomogeneity one structure for $(M,g)$, then $f_* S$ is a cohomogeneity one structure for $(M',g)$ that is isomorphic to $S$. 
\end{proposition}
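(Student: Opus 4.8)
The plan is to exploit the fact that pushforward along the isometry $f$ commutes with all the natural constructions appearing in~\eqref{CO1.eq:ASCO1}. First I would set $\TT = \T - S$, the CO1-AS connection on $(M,g)$ supplied by the hypothesis that $S$ is a cohomogeneity one structure, and I would take as candidate connection on $M'$ the difference connection $\TT' := \T' - f_* S$, where $\T'$ is the Levi-Civita connection of $(M',g')$. Since $f$ is an isometry it preserves the Levi-Civita connection, so $f_* \T = \T'$ and therefore
$\TT' = f_*\T - f_* S = f_*(\T - S) = f_* \TT$.
In other words the candidate connection is exactly the pushforward $f_* \TT$, with candidate vector field $\xi' := f_* \xi$, which is again unit because $f_* g = g'$. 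The proposition then reduces to checking that the pair $(\TT', \xi')$ satisfies~\eqref{CO1.eq:ASCO1} on $M'$.

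For the verification I would invoke naturality of pushforward under the diffeomorphism $f$: the curvature and torsion of $f_* \TT$ are $\curv' = f_* \curv$ and $\tors' = f_* \tors$, and for any tensor field $\Phi$ on $M$ one has $(f_*\TT)(f_*\Phi) = f_*(\TT\Phi)$. Taking $\Phi = \curv, \tors, \xi$ in turn gives $\TT'\curv' = f_*(\TT\curv) = 0$, $\TT'\tors' = f_*(\TT\tors) = 0$, and $\TT'\xi' = f_*(\TT\xi) = 0$. Because $f$ is an isometry, $f_* g = g'$ and $f_* \mathcal{D} = \mathcal{D}'$, where $\mathcal{D}' = \{X : g'(X,\xi') = 0\}$; hence $\TT'_X g' = 0$ for every $X \in \mathcal{D}'$, and $\tors'(X,Y) = (f_*\tors)(X,Y) \in f_*\mathcal{D} = \mathcal{D}'$ whenever $X, Y \in \mathcal{D}'$. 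Completeness of $\TT'$ is inherited from that of $\TT$ since $f$ carries $\TT$-geodesics to $\TT'$-geodesics, and the leaves of $\mathcal{D}'$ are embedded and closed as diffeomorphic images of the leaves of $\mathcal{D}$. This establishes that $f_* S = \T' - \TT'$ is a cohomogeneity one structure on $(M',g')$.

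Finally, the isomorphism claim is immediate from the definition of isomorphic cohomogeneity one structures: an isometry sending $S$ to $f_* S$ is precisely $f$ itself, since $f_* S$ is by construction the image of $S$ under $f$. Hence $S$ and $f_* S$ are isomorphic, with $f$ realizing the isomorphism. I do not expect a serious obstacle here; the proof is essentially bookkeeping, and the only point demanding genuine care is the identity $f_* \TT = \T' - f_* S$, which rests on the fact that an isometry preserves the Levi-Civita connection, together with the routine observation that each of the five conditions in~\eqref{CO1.eq:ASCO1} is natural under pushforward by an isometry.
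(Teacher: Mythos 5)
Your proposal is correct and follows essentially the same route as the paper: the paper's entire proof is the observation that $f$ is an affine diffeomorphism between $\TT = \T - S$ and $\TT' = \T' - f_* S$, which is exactly your identity $f_*\TT = \TT'$, and the remaining verifications of~\eqref{CO1.eq:ASCO1} that you spell out (naturality of curvature, torsion, $\xi$, $g$, and $\mathcal{D}$ under pushforward, plus completeness) are precisely what the paper leaves implicit as ``direct.''
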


\begin{proof}
   This proof is direct after the observation that $f$ is an affine diffeomorphism between the connections $\TT = \T -S$ and $\TT' = \T' - f_* S$, where $\T$ and $\T'$ are the Levi-Civita connections of $(M,g)$ and $(M',g')$, respectively. 
\end{proof}


\section{Main result: local version} \label{s:local-CO1-AS-Theorem}

Let $\mathcal{G}$ be a Lie pseudo-group of differentiable transformations on a manifold $M$ (we refer the reader to~\cite{S1992} or~\cite{A2021} for an exposition on this topic). Given a point $p_0\in M$, we define $\G(p_0)$ as the set of transformations for which $p_0$ belongs to the domain, and $\G(p_0,p_0) \subset \G (p_0)$ as the set of transformations $f$ such that $f (p_0)=p_0$. The quotient $H(p_0) = \G(p_0, p_0) / \sim$, with respect to the relation $f \sim f' \iff f |_U = f' |_U$ for some neighbourhood $U$ of $p_0$, is a Lie group (cf.~\cite[Ch.~1]{A2021}).
We now consider a frame $u_0\in \mathcal{L}(M)$ over the point $p_0$. We say that the action of $\G$ on $M$ is \emph{effective} and \emph{closed} if the map
\begin{equation}\label{CO1.eq:2}
	\begin{alignedat}{3}
		H(p_0) &\to  \GL(n,\R) \\
		f &\longmapsto u_0^{-1} \circ f_* \circ u_0
	\end{alignedat}
\end{equation}
is a monomorphism and its image $\mathbf{H}(u_0)$ is closed, i.~e.,  the image $\mathbf{H}(u_0)$ is a Lie subgroup of $\GL(n,\R)$. The morphism~\eqref{CO1.eq:2} will be called the isotropy representation of $\G$ on $M$. The effectiveness and closedness of this representation do not depend on the choice of $u_0$ (see~\cite[Prop.~3.4]{CC2022*}).
An effective and closed action of $\mathcal{G}$ on $M$ naturally induces an action of $\mathcal{G}$ on the frame bundle $\LM$:
\begin{equation*}
    (f,u) \mapsto f_{*,\pi(u)}\circ u, \quad u\in \mathcal{L}(M),
\end{equation*}
where $\mathbf{H}(u_0) \subset \GL(n,\R)$ represents the isotropy group at $u_0$ for the Lie groupoid defined by the germs of differentials of $\mathcal{G}$. It is called the \emph{linear isotropy group}.

Since two local isometries $f$ and $f'$ such that $f(p_0) = f'(p_0)$ coincide on a neighbourhood of $p_0$ (i.~e., $f \sim f'$) if and only if their differentials at $p_0$ are equal, the action of a pseudo-group of local isometries of a Riemannian manifold $(M,g)$ is always effective. 

If the action is both effective and transitive, given $u_0\in \mathcal{L}(M)$, the bundle
\begin{equation*}
    \mathcal{P}(u_0) = \{ f_* (u_0) :\: f \in \mathcal{G}\},
\end{equation*}
is a reduction of the frame bundle to the subgroup $\mathbf{H}(u_0)$. Under these conditions, for an element $\varphi \in H(p_0)$, we define
\begin{align*}
\Ad_{\varphi} \colon T_{u_0}P(u_0) &\to  T_{u_0}P(u_0)  \label{Equation RL3} \\
\dt (\varphi_t)_*(u_0) &\longmapsto \dt (\varphi \circ\varphi_t \circ \varphi^{-1})_* (u_0)
\end{align*}
where $\varphi _t \in \mathcal{G}$,
and $t$ belongs to a certain interval $(-\varepsilon , \varepsilon)$.

Consider a Lie pseudo-group $\mathcal{G}$ whose action is transitive, effective and closed. In this context, we say that the action is \emph{reductive} 
(see~\cite[Def.~3.5]{CC2022*})
if and only if the tangent space at $u_0$ admits a decomposition $T_{u_0}P(u_0) = \h + \m$, where $\h$ is the Lie algebra associated with $\mathbf{H}(u_0)$ and $\m $ is a $\Ad(H(p_0))$-invariant subspace.

\begin{theorem}{\cite[3.1.13]{CC2019}} \label{Loc Hom => Reductive}
    Let $(M,g)$ be a Riemannian manifold such that a pseudo-group $\mathcal{G}\subset \mathrm{Isom}_{loc} (M,g)$ is acting transitively. Then, the action of $\mathcal{G}$ is effective, closed and reductive.
\end{theorem}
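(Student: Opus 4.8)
The plan is to check the three properties in the order in which each becomes available, since closedness is exactly what turns $\mathbf{H}(u_0)$ into a genuine Lie subgroup and $P(u_0)=\{f_*(u_0):f\in\G\}$ into a reduction of $\LM$, and reductivity will then fall out of the compactness produced along the way. Effectiveness is the easy step and was already signalled in the text: if $f\in\G(p_0,p_0)$ represents an element of the kernel of the isotropy representation, then $f$ fixes $p_0$ and satisfies $f_{*,p_0}=\mathrm{id}$, so by the rigidity of isometries (a local isometry is determined by its value and its differential at a single point) one has $f=\mathrm{id}$ near $p_0$; hence its class is trivial in $H(p_0)$ and the isotropy representation is a monomorphism.

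For closedness I would fix an orthonormal frame $u_0$, so that every $a=u_0^{-1}f_{*,p_0}u_0$ is orthogonal and $\mathbf{H}(u_0)\subseteq\mathrm{O}(n)$ sits inside a compact group. The core of the argument is to identify $\mathbf{H}(u_0)$ with a stabilizer cut out by closed conditions. On the one hand, each local isometry fixing $p_0$ preserves the curvature tensor $R$ of $\T$ and all of its covariant derivatives $\T^k R$, so that, read in the frame $u_0$, we obtain $\mathbf{H}(u_0)\subseteq\mathbf{S}$, where $\mathbf{S}\subseteq\mathrm{O}(n)$ is the subgroup of orthogonal maps preserving the tensors $R,\T R,\T^2 R,\dots$ at $p_0$; this $\mathbf{S}$ is closed, being an intersection of the (algebraic, hence closed) conditions that each $\T^k R$ be fixed. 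The reverse inclusion is where transitivity enters: the transitive action by local isometries makes $(M,g)$ locally homogeneous, and Singer's theorem provides a finite order $k_0$ beyond which the curvature data impose no new constraint, so that any orthogonal map preserving $R,\dots,\T^{k_0+1}R$ at $p_0$ is realised as the differential of an element of $\G$ fixing $p_0$ (here one uses that $\G$ is a complete Lie pseudo-group of local isometries). This yields $\mathbf{H}(u_0)=\mathbf{S}$, so $\mathbf{H}(u_0)$ is a closed, and therefore compact, Lie subgroup of $\GL(n,\R)$. I expect this to be the main obstacle: passing from injectivity of the isotropy representation to closedness of its image genuinely needs the local-homogeneity input and the Cartan--Ambrose--Hicks/Singer finiteness, not merely formal properties of the pseudo-group.

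With closedness in hand, $P(u_0)$ is a reduction of $\LM$ to the compact structure group $\mathbf{H}(u_0)$, and reductivity follows by averaging. The group $H(p_0)\cong\mathbf{H}(u_0)$ is compact and acts linearly on $T_{u_0}P(u_0)$ through $\Ad$, and the vertical subspace $\h$ --- the Lie algebra of $\mathbf{H}(u_0)$ --- is an invariant subspace of this action, since conjugation by elements of $H(p_0)$ preserves the isotropy subgroup and hence its Lie algebra. By complete reducibility of representations of compact groups, equivalently by averaging an arbitrary linear complement of $\h$ against a Haar measure on $H(p_0)$, one obtains an $\Ad(H(p_0))$-invariant complement $\m$, yielding the decomposition $T_{u_0}P(u_0)=\h+\m$. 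This exhibits the action as effective, closed and reductive, completing the proof.
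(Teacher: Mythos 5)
Your proposal is correct in substance, but note first that the paper offers no proof of this statement at all: it is imported wholesale from \cite[Thm.~3.1.13]{CC2019}, so the comparison is really with the standard argument in that reference. Your effectiveness step is identical to it (one-jet rigidity of local isometries, which the paper itself recalls just before the statement). Where you genuinely diverge is in the other two properties. For closedness, you invoke the Singer invariant and the Nicolodi--Tricerri realization theorem for locally homogeneous spaces to identify $\mathbf{H}(u_0)$ with the full stabilizer in $\mathrm{O}(n)$ of the jets $R, \T R,\ \dots\ , \T^{k_0+1}R$ at $p_0$; the classical route is more elementary: on a normal ball an isometry fixing $p_0$ equals $\exp_{p_0}\circ f_{*,p_0}\circ \exp_{p_0}^{-1}$, so a convergent sequence of isotropic one-jets in $\mathrm{O}(n)$ converges to the one-jet of a local isometry, and the set of such jets is closed (indeed compact) with no finiteness theorem needed. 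Both arguments share the caveat you flagged in passing: the realized (or limiting) local isometry must actually lie in $\mathcal{G}$, which requires $\mathcal{G}$ to be the full local isometry pseudo-group or at least closed under such limits --- for a strict sub-pseudo-group the literal statement can fail (e.g.\ translations of Euclidean $\R^4$ together with an irrational one-parameter winding in a maximal torus of $\mathrm{SO}(4)$ give a transitive action with non-closed linear isotropy), so your parenthetical completeness hypothesis is genuinely load-bearing, not cosmetic. For reductivity, your Haar averaging over the compact $\mathbf{H}(u_0)$ is valid, but it chains reductivity to closedness unnecessarily: since $\mathbf{H}(u_0)\subseteq\mathrm{O}(n)$, one may fix any $\Ad(\mathrm{O}(n))$-invariant inner product on $T_{u_0}\mathcal{L}(M)$ (Levi-Civita horizontal--vertical splitting, with $-\tr(AB)$ on the vertical factor) and take $\m=\h^{\perp}\cap T_{u_0}\mathcal{P}(u_0)$; invariance under $\Ad(H(p_0))$ is then automatic with no compactness of the image. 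What each approach buys: yours yields the stronger structural fact that $\mathbf{H}(u_0)$ is exactly the stabilizer of a finite curvature jet, while the classical one keeps the three properties logically independent and avoids the heavy Singer machinery.
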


\begin{definition}
    A Riemannian manifold $(M,g)$ is \emph{locally cohomogeneity one} if there exists a pseudo-group of local isometries $\mathcal{G}$ acting on $M$ in such a way that $M/\mathcal{G}$ is connected and every orbit is an embedded and closed submanifold of codimension one.

    A locally cohomogeneity one Riemannian manifold $(M,g)$ is said to be \emph{regular} if for any two points $p$ and $q$ of $M$, the Lie groups $H(p)$ and $H(q)$ are conjugate in $\G$, that is, for every $f\in H(p)$, there is $g\in \G$ such that $g^{-1} \circ f \circ g \in H(q)$. When $\G$ is a Lie group instead of a Lie pseudo-group, these conditions are equivalent, by \cite[Thm.~5]{D2008}, to saying that the action is proper for the closure of $\G$ in $\mathrm{Isom}(M)$ and that all orbits are principal.
\end{definition}

\begin{theorem} \label{CO1.teorema local}
	Let $(M,\, g)$ be a connected Riemannian manifold. The following two are equivalent:
	\begin{enumerate}
		\item $(M,g)$ is a regular locally cohomogeneity one Riemannian manifold.
		\item There exists a connection $\TT$ and a unit vector field $\xi\in\mathfrak{X}(M)$ such that
		\begin{equation}\label{CO1.eq:5.1}
			\begin{alignedat}{2}
				\TT \curv &=  0, \quad \TT \tors = 0,  \quad \TT \xi = 0,\quad \\
				\TT_X g &= 0,\quad \tors(X, Y) \in \mathcal{D}, \quad &\forall\, X,Y  \in \mathcal{D},
			\end{alignedat}
		\end{equation}
        where $\mathcal{D} = \{X\in\mathfrak{X}(M) :\: g(X,\xi) = 0 \}$, and $\curv$ and $\tors$ are the curvature and torsion of $\TT$, respectively. Furthermore, the maximal integral leaves of the distribution $\mathcal{D}$ (which is integrable according to Lemma~\ref{CO1.lem:200}) are embedded and closed.
	\end{enumerate}
\end{theorem}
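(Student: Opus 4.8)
The plan is to mirror the proof of Theorem~\ref{CO1.teorema principal}, replacing each global homogeneity input by its local counterpart: the local non-metric Ambrose--Singer theorem \cite[Thm.~3.7]{CC2022*} in place of the global \cite[Thm.~2.2]{CC2022*}, the local reductivity of Theorem~\ref{Loc Hom => Reductive} in place of the global reductive decomposition, and the pseudo-group of local affine transvections of $\TT$ in place of the global transvection group of Theorem~\ref{kow}. In both directions Lemma~\ref{CO1.lem:200} is the common geometric core: from the connection data alone it already guarantees that $\mathcal{D}$ is integrable with leaves totally geodesic for $\TT$, so the picture of an orthogonal foliation transverse to the $\TT$-parallel unit field $\xi$ is in place before any homogeneity argument is invoked.

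For $(2)\Rightarrow(1)$ I would first read the equations $\TT\curv=0$, $\TT\tors=0$, $\TT\xi=0$ through \cite[Thm.~3.7]{CC2022*} applied to the tensor $K=\xi$: this makes $(M,\xi)$ a reductive locally homogeneous space whose associated pseudo-group $\G$ is the pseudo-group of local affine transvections of $\TT$, with $\TT$ its canonical connection. At a frame $u_0$ over $p_0$ this supplies a reductive splitting $T_{u_0}P(u_0)=\h+\bar{\m}$, which I refine using the metric: under the canonical identification $\bar{\m}\cong T_{p_0}M$ I take $\m\subset\bar{\m}$ to correspond to the hyperplane $\mathcal{D}_{p_0}$, and then $\h'=\Span\{[X,Y]_{\h}:X,Y\in\m\}$ together with $\m$ spans a Lie subalgebra $\g=\h'+\m$ cutting out a sub-pseudo-group $\G'$, exactly as in the global construction. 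Every element of $\G'$ preserves $\xi$ (it lies in $\G$) and runs along $\mathcal{D}$-geodesics, so by $\TT_Xg=0$ for $X\in\mathcal{D}$ its differential preserves the orthogonal splitting $\mathcal{D}\oplus\R\xi$ isometrically on each factor; hence $\G'\subset\Isom_{loc}(M,g)$. By Lemma~\ref{CO1.lem:200} the leaves of $\mathcal{D}$ are totally geodesic, and $\G'$ is locally transitive on each leaf because its transvections realise parallel transport along broken $\mathcal{D}$-geodesics \cite{KN1963}, which reach all nearby leaf points; thus the $\G'$-orbits are precisely the codimension-one integral leaves of $\mathcal{D}$, $M/\G'$ is connected since $M$ is, and regularity follows from the conjugacy of the linear isotropy groups along $\G'$.

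For $(1)\Rightarrow(2)$, starting from a pseudo-group $\G$ realising the regular locally cohomogeneity one action, I would define $\xi$ as the local unit normal to the orbits; this is well-defined and smooth because the orbits are embedded hypersurfaces and the action is regular. Each orbit, with its induced metric and the restricted action of $\G$, carries a transitive local isometry pseudo-group, so by Theorem~\ref{Loc Hom => Reductive} it is a reductive locally homogeneous Riemannian manifold, furnishing a reductive decomposition of the directions tangent to the leaf. Adjoining the $\xi$-direction, which is $\Ad$-invariant because $\xi$ is $\G$-invariant and the orthogonal $\xi$-geodesic flow commutes with $\G$ (as in the $A\times G$ construction of Theorem~\ref{CO1.teorema principal}), yields a reductive local homogeneous structure for the pair $(M,\xi)$ whose canonical connection $\TT$ satisfies $\TT\curv=0$ and $\TT\tors=0$ at once, and $\TT\xi=0$ since $\xi$ is invariant (cf.~\cite{CC2019}). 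Finally $\TT_Xg=0$ for $X\in\mathcal{D}$ holds because parallel transport along a $\TT$-geodesic inside a leaf is the differential of a local isometry of the leaf, and the torsion condition $\tors(X,Y)\in\mathcal{D}$ is equivalent to the leaves being totally geodesic for $\TT$, exactly as in the global argument.

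The main obstacle is the germ-level bookkeeping that replaces the global group theory of Theorem~\ref{CO1.teorema principal}, and above all the assertion that the maximal integral leaves of $\mathcal{D}$ are embedded and closed once the completeness hypothesis is dropped. I expect embeddedness to follow from the leaves being locally homogeneous integral manifolds of the smooth distribution $\mathcal{D}$ under the transitive local action of the isometric sub-pseudo-group $\G'$, and closedness to be extracted from the regularity assumption together with the transverse rigidity provided by the $\TT$-parallel unit field $\xi$: since $\mathcal{D}=\xi^{\perp}$ and the leaves are totally geodesic for $\TT$ by Lemma~\ref{CO1.lem:200}, the orthogonal $\xi$-geodesics furnish a local product structure in which each leaf appears as a single slice. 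Making this transverse structure precise --- rather than the by-now-routine verification of the covariant equations~\eqref{CO1.eq:5.1} --- is where I expect the real effort to concentrate.
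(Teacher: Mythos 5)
Your direction (2)$\Rightarrow$(1) is essentially the paper's own route (restrict the pseudo-group of local transvections of $\TT$ supplied by \cite[Thm.~3.7]{CC2022*} to transvections along curves in the leaves), but your direction (1)$\Rightarrow$(2) has a genuine gap. To invoke \cite[Thm.~3.7]{CC2022*} you must verify that the enlarged pseudo-group $\bar{\G}$, generated by $\G$ and the flows of unit normal fields, acts not only transitively and reductively but also \emph{effectively} and with \emph{closed} linear isotropy, and in the local setting neither is automatic: the normal flow can return a point to its own $\G$-orbit, producing a discrete isotropy element of the form $\varphi_{t_0}\circ f^{-1}$ with $f\in\G$. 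The paper must then prove (i) effectiveness --- if $(\varphi_{t_0}\circ f^{-1})_{*}$ is the identity at $p_0$, then $\varphi_{t_0}\circ f^{-1}$ is the identity near $p_0$, established through an equivariance argument on flows of local Killing vector fields (Lemma~\ref{L:effectively}) --- and (ii) closedness --- the cyclic subgroup of $\GL(n,\R)$ generated by the linearization $A=u_0^{-1}\circ(\varphi_{t_0}\circ f^{-1})_*\circ u_0$ is closed only because convergence of a sequence of powers $A^{k_n}$ forces the eigenvalues of $A$ to be roots of unity, making the group finite (Lemma~\ref{L:closed}). Your sketch mentions neither condition; the step ``adjoining the $\xi$-direction yields a reductive local homogeneous structure whose canonical connection satisfies the first row'' silently assumes both. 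Relatedly, the global well-definedness of $\xi$ does not follow merely from ``orbits embedded and the action regular'': one must exclude an isotropy element with $f_*\xi_p=-\xi_p$, which is exactly Lemma~\ref{lemmurcillo} and requires a contradiction argument combining the conjugacy of isotropy groups with embeddedness of the orbit.

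You also locate the main difficulty in the wrong place. In (2)$\Rightarrow$(1) the embeddedness and closedness of the maximal leaves is part of hypothesis (2), and in (1)$\Rightarrow$(2) the leaves of $\mathcal{D}$ coincide with the $\G$-orbits, which are embedded and closed by the very definition of a locally cohomogeneity one manifold; no transverse local-product or slice argument is needed, so the effort you budget for that step should instead go to the effectiveness and closedness lemmas above. A minor further point: in (2)$\Rightarrow$(1) you assert rather than prove the conjugacy of the isotropy groups $H(p)$ and $H(q)$; the paper derives it in a few lines from the fact that the flow of $\xi$ commutes with every $f\in\G$ (a consequence of $\TT\xi=0$), writing $p=\varphi_{t_0}\circ h(q)$ and conjugating by $h$.
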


\begin{proof}
    Although the structure of the proof is close to that of Theorem~\ref{CO1.teorema principal}, it requires substantial modifications as we give below.	
        
    \noindent \textbf{Proof of (2) implies (1).} From~\cite[Thm.~3.7]{CC2022*}, 
    we know that there exists a reductive Lie pseudo-group $\bar{\mathcal{G}}$ that acts transitively on $(M,g)$ and leaves $\xi$ invariant. Moreover, this pseudo-group $\bar{\mathcal{G}}$ can be identified with the Lie pseudo-group of local transvections of $\TT$. We thus consider the subset $\mathcal{G}$ of $\bar{\mathcal{G}}$ consisting of local transvections associated with parallel transports by curves in the leaves. Since $\TT \xi = 0$ and $\TT_X g = 0$ for all $X\in \mathcal{D}$, these local transvections preserve the vector field $\xi$, the Riemannian tensor $g$, and thus the codimension one distribution $\mathcal{D}$. 
	
	\begin{lemma}
		The subset $\mathcal{G}$ is a Lie pseudo-group.
	\end{lemma}
	
	\begin{proof}
		This follows directly from the definition of local transvection, i.~e., it is a local diffeomorphism that preserves every holonomy bundle.
	\end{proof}

    \begin{lemma}
        For any $f\in \mathcal{G}$ and any point $q\in \mathrm{dom}(f)$, both $q$ and $f(q)$ belong to the same leaf. Furthermore, the action of $\mathcal{G}$ on every leaf is transitive.
    \end{lemma}
    \begin{proof}
        The local transvection $f\in \mathcal{G}$ is associated with a $\TT$-parallel transport along a broken geodesic in one leaf $O$, and that leaf is thus invariant. However, the point $q\in\mathrm{dom}(f)$ does not necessarily belong to that leaf. In that case, there exists $t_0 \in \R$ such that $\varphi_{t_0} (q) \in O$, where $\varphi_t$ is the flow of $\xi$. The vector field $\xi$ is geodesic  for $\TT$, the differential of the flow is the $\TT$-parallel transport along $\xi$, and the flow sends leaves to leaves. As $f$ preserves $\xi$, the points $q$ and $f(q)$ belong to the same leaf $\varphi_{-t_0} (O)$.

        The transitivity comes from the existence of (broken) geodesics connecting any two points of any leaf.
    \end{proof}

\begin{lemma}
    Let $H(p)$ and $H(q)$ be the isotropy groups of $p$, $q\in M$. Then, $H(p)$ and $H(q)$ are conjugate in $\G$.
\end{lemma}
\begin{proof}
    Since $\TT \xi = 0$, the flow of $\xi$ commutes with any $f\in \G$. As $\G$ acts transitively, there exist $h \in \G$ and $t_0\in \R$ such that $p = \varphi_{t_0} \circ h (q)$, where $\varphi_t$ is the flow of $\xi$. Therefore, for every element $g\in H(q)$, we have that 
    $h^{-1} \circ \varphi_{-t_0} \circ g \circ \varphi_{t_0} \circ h \in H(p)$. Since $\varphi$ commutes with $\G$, $h^{-1} \circ g \circ h \in H(p)$. Conversely, for every $f \in H(p)$ we have $h \circ f \circ h^{-1} \in H(q)$. Therefore, $h^{-1} \circ H(q) \circ h = H(p)$.
\end{proof}
        
\noindent \textbf{Proof of (1) implies (2).} The procedure is to find a connection $\TT$ satisfying the first row of~\eqref{CO1.eq:5.1}. Again, 
from~\cite[Thm.~3.7]{CC2022*},
it is sufficient to show that there exists a Lie pseudo-group $\bar{\mathcal{G}}$ whose action is transitive (Lemma~\ref{L:transitively}), effective (Lemma~\ref{L:effectively}), closed (Lemma~\ref{L:closed}), and reductive (Lemma~\ref{L:reductive}) on the leaves. Afterwards, we prove the second row of~\eqref{CO1.eq:5.1}.

\begin{lemma}\label{lemmurcillo}
    Let $(M,g)$ be a regular locally cohomogeneity one Riemannian manifold associated with the Lie group action $\G$. Then, for any $p\in M$ and any unit vector $\xi_p$ orthogonal to $T_p (\G\cdot p)$, we have
    $$
    H(p) \cdot \xi_p = \xi_p,
    $$
    where $H(p)$ is given in \eqref{CO1.eq:2}.
\end{lemma}
\begin{proof}
    By contradiction, we assume that there exists $f\in \G$ such that $f(p) = p$ and $f_* \xi_p = -\xi_p$. As $\G \cdot p$ is embedded, there is an $\varepsilon >0$ small enough such that the curve
    $$\Sigma = \{ \exp_p (t\xi_p) :\: t \in (-\varepsilon, \varepsilon) \}$$
    intersects $\G\cdot p$ only at $p$.
    
    We take the points $q = \exp_p (\frac{\varepsilon}{4} \xi_p)$ and $\hat{q} = \exp_p ( - \frac{\varepsilon}{4} \xi_p)$ with $q \neq \hat{q}$. Thus $f(q) = \hat{q}$, that is, $f\notin H(q)$.
    Since the Lie groups $H(p)$ and $H(q)$ are conjugate, there exists $g\in \G$ such that $g \circ f \circ g^{-1} = h \in H(q)$ and $h \notin H(p)$. Moreover, by construction, $h_* \xi_q = -\xi_q$, thus, $h(p) = \exp_p (\frac{\varepsilon}{2} \xi_p) \in \Sigma$, which contradicts the fact that $\Sigma$ intersects $\G\cdot p$ only at $p$. 
\end{proof}
 
Due to Lemma~\ref{lemmurcillo}, any orbit admits a well-defined unit normal vector field $\xi$. As $M/\G$ is connected, in analogy with Theorem~\ref{CO1.teorema principal}, the vector field $\xi$ is globally defined on $M$. That is, for every orbit $\mathcal{G} \cdot p_0$ there exists an $\varepsilon > 0$ such that the flow $\varphi \colon (-\varepsilon, \varepsilon) \times U \to M$, where $U$ is an open set containing $\mathcal{G} \cdot p_0$, is defined. Let $\mathcal{G}$ be the Riemannian pseudo-group acting with cohomogeneity one on $M$. We define the pseudo-group $\bar{\mathcal{G}}$ generated by $\mathcal{G}$ and $\mathcal{A}$, the latter being the pseudo-group generated by flows of unit local vector fields orthogonal to the leaves.


\begin{lemma} \label{L:transitively}
	The Lie pseudo-group $\bar{\mathcal{G}}$ acts transitively on $M$.
\end{lemma}

\begin{proof}
    Let $p$ and $q$ be two points. If $p$ and $q$ belong to the same leaf, then there exists a local transformation $f\in \G$ such that $f(p)=q$. In general, it is easy to see that $A=\{ x \in M :\: f(p)=x, \, f\in \bar{\mathcal{G}}\}$ is open and closed. Since $M$ is connected, we get the result.
\end{proof}

As $\xi$ is invariant under the isometries of $\mathcal{G}$, every geodesic flow orthogonal to the leaves commutes with the local isometries of $\mathcal{G}$ by Lemma~\ref{lemma:orthogonal-geodesic}. Actually, every transformation $\bar{f} \in\bar{\mathcal{G}}$ can be written as $\bar{f} = \varphi \circ f$, where $\varphi \in \mathcal{A}$ and $f\in \G$.


\begin{lemma} \label{L:effectively}
    The Lie pseudo-group $\bar{\mathcal{G}}$ is effective.
\end{lemma}

\begin{proof}
    According to the first paragraph of Section~\ref{s:local-CO1-AS-Theorem}, let $H_{\G} (p_0) = \G (p_0, p_0)/\sim$ be the quotient space of elements $f \in \mathcal{G}$ fixing a point $p_0$ with respect to the relation $f \sim f' \iff f |_U = f' |_U$ for some neighbourhood $U$ of $p_0$. Analogously, the space of elements $\bar{f} \in\bar{\mathcal{G}}$ fixing a point $p_0$ is given by
	\begin{equation*}
		H(p_0) = H_{\G} (p_0) + \mathcal{L}\left(\varphi_{t_0} \circ f^{-1}\right),
	\end{equation*}
    where $\mathcal{L}(\varphi_{t_0} \circ f^{-1})$ is the discrete Lie group generated by the element $\varphi_{t_0} \circ f^{-1}$, and $t_0$ is the smallest $t_0>0$ such that $\varphi_{t_0} (p_0) = f(p)$ for some $f\in\mathcal{G}$.
    Note that the set $D = \{\varphi_{k\,t_0}(p_0) :\: k \in \Z \}$ is the intersection of $\{ \varphi_t (p_0) :\: t\in\R \}$ and $\mathcal{G}\cdot p_0$, and both subsets are closed in $M$. 

    The action $\mathcal{G}$ is effective because it is a Lie pseudo-group of isometries. 
    We show that if $(\varphi_{t_0} \circ f^{-1})_* (p_0) = Id_{T_{p_0}M}$, then $\varphi_{t_0} \circ f^{-1}$ is the identity map in an open neighbourhood $U$ of $M$. 
    There is an open neighbourhood $U$ such that both the domain and the image of $\varphi_{t_0} \circ f^{-1}$ are contained in $U$.  This map is $\mathcal{G}(U)$-equivariant by local isometries $h \in \mathcal{G}$ with domain $U$, i.~e., for every $h\in \mathcal{G}(U)$ there exits $h'\in \mathcal{G}(U)$ such that $\varphi_{t_0} \circ f^{-1} \circ h = h' \circ \varphi_{t_0} \circ f^{-1}$. For every local Killing vector field $X^*$, its flow comes from  a family $F_t$ of isometries with $t\in I$, where $I$ is a closed interval, and $F_t (p_0)$ is an integral curve of $X^*$. Due to the equivariance, there exist a local Killing vector field $Y^*$ and a family $H_t$ of isometries such that,
    \begin{equation*}
        \varphi_{t_0} \circ f^{-1} \circ F_t = H_t \circ \varphi_{t_0} \circ f^{-1}.
    \end{equation*}
    When applied to $p_0$, we get
    \begin{equation*}
        \varphi_{t_0} \circ f^{-1} \circ F_t (p_0) = H_t (p_0),
    \end{equation*}
    i.~e., $(\varphi_{t_0} \circ f ^{-1})_* (X^*(p_0)) = Y^* (p_0)$. Necessarily, $X^* = Y^*$; therefore $H_t = F_t$. Consequently, by applying this argument for every local Killing vector field, we conclude that  $(\varphi_{t_0} \circ f^{-1})$ fixes integral curves of Killing vector fields with the initial point $p_0$. Under these conditions, $\varphi_{t_0} \circ f^{-1} = Id_U$.      
\end{proof}


\begin{lemma} \label{L:closed}
	The action of the Lie pseudo-group $\bar{\mathcal{G}}$ is closed.
\end{lemma}

\begin{proof}
    The image of $H(p_0)$ by the isotropy map~\eqref{CO1.eq:2} is
    \begin{equation*}
        \mathbf{H}(u_0) = \mathbf{H}_{\G} (u_0) + \mathcal{L}(u_0{}^{-1} \circ (\varphi_{t_0} \circ f^{-1})_* \circ u_0),
    \end{equation*}
    where $\mathbf{H}_{\G} (u_0)$ is the image of $H_{\G} (p_0)$.
    On the one hand, as every leaf $\mathcal{G} \cdot p$ is a locally homogeneous Riemannian manifold and its action is reductive (see~\cite{T1992}), then $\mathbf{H}_{\mathcal{G}} (u_0)$ is equal to the holonomy group of one AS-connection. Consequently, it is closed in $\GL(n,\R)$.

    On the other hand, let $A = u_0{}^{-1} \circ (\varphi_{t_0} \circ f^{-1})_* \circ u_0 \in \GL(n,\R)$. Then, the space $\mathcal{L}(A)$ consists of the powers $A^k$ for $k\in \Z$. The matrix $A \in \GL(n,\R) \subset \GL(n,\C)$ diagonalizes as $A = P\cdot  D\cdot P^{-1}$. We consider a convergent sequence $\{A^{k_n}\}$ in $\GL(n,\C)$. Its eigenvalues cannot have a norm different from 1; otherwise, the limit would have a singular eigenvalue. If the eigenvalues are of the form $e^{iv}$ with $v$ irrational, $\{A^{k_n}\}$ would not converge. Then, the exponents of the eigenvalues are rational and the group $\mathcal{L}(A)$ is cyclic, finite, and closed. Finally, since $\mathbf{H}_{\mathcal{G}} (u_0)$ and $\mathcal{L}(A)$ are closed, it follows that $\mathbf{H} (u_0)$ is closed.
\end{proof}

As $\mathcal{L}((\varphi_{t_0} \circ g^{-1}))$ is discrete, the Lie algebra of $H(p_0)$ is equal to the Lie algebra $\h$ of $\mathrm{H}_{\G}(p_0)$.


\begin{lemma}\label{L:reductive}
	The Lie pseudo-group $\bar{\mathcal{G}}$ acts reductively.
\end{lemma}

\begin{proof}
	
	Given a point $p_0\in M$ and a frame $u_0$ on it, we consider 
    \[
    \mathcal{P}(u_0) = \left\{ f_* \circ u_0 \in \LM :\: f \in \bar{\mathcal{G}}\right\}
    \]
    and $\mathcal{P}(u_0, \mathcal{G}\cdot p_0) = \{ f_* \circ u_0 \in \mathcal{L}(\mathcal{G}\cdot p_0) :\: f \in \mathcal{G}\}$, the reduction of the frame bundle of $M$ and the orbit $\mathcal{G}\cdot p_0$, respectively. Since every locally homogeneous Riemannian manifold is reductive (see Theorem~\ref{Loc Hom => Reductive} above), we have the reductive decomposition in the leaves: $T_{u_0} \mathcal{P}(u_0, \mathcal{G}\cdot p_0) = \h + \m$, where $\m$ is an $\Ad(H_{\G} (p_0))$-invariant subspace. 
    
    We consider the decomposition $T_{u_0} \mathcal{P}(u_0) = \h + (\a + \m)$, where 
    \[
    \a = \left\{ \dt (\varphi)_* \circ u_0 :\: \varphi \in \mathcal{A} \right\}.
    \]
    As every unit vector field $\xi$ orthogonal to the leaves is invariant under the local transformations of $\mathcal{G}$, the flow of $\xi$ commutes with every $f\in \mathcal{G}$. Consequently, $\a$ is $\Ad(\mathrm{H}_{\G} (p_0))$-invariant and $\Ad (\varphi_{t_0} \circ L_{g^{-1}})$-invariant. Finally, as $\varphi_{t_0} \circ L_{g^{-1}}$ preserves $\xi$ and the horizontal distribution defined by the connection above, the subspace $\m$ is $\Ad (\varphi_{t_0} \circ L_{g^{-1}})$-invariant.
\end{proof}

Therefore, from~\cite[Thm.~3.7]{CC2022*}, there exists a global connection $\TT$ such that 
\begin{equation*}
	\TT \curv = 0, \quad \TT \tors = 0.
\end{equation*}
Finally, the proof of
\begin{equation*}
	 \TT \xi = 0, \quad \TT_X g = 0, \quad \tors(X,Y) \in \mathcal{D}
\end{equation*}
is analogous to the corresponding proof in Theorem~\ref{CO1.teorema principal}.\renewcommand*{\qedsymbol}{\(\blacksquare\)}
\end{proof}

We also have the following properties of CO1-AS-manifolds.

\begin{proposition} \label{prop:properties}
Let $(M,g, \TT)$ be a CO1-AS-manifold and let $S = \T - \TT$ be the corresponding cohomogeneity one structure tensor. For every $X$, $Y \in \mathcal{D}$, we have
\begin{enumerate}
	\item $\TT_X S = 0$,
	\item $S_X \cdot g=0$,
	\item $g(S_XY,\xi) = g(\T_XY,\xi)=g(\mathrm{II}(X,Y),\xi)$, 
	\item $S_{\xi} \xi = 0$,
\end{enumerate}
where $\mathrm{II}$ is the second fundamental form of the leaves.
\end{proposition}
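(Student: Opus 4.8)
The plan is to establish the four identities separately, obtaining (2) and (3) from the single algebraic relation comparing $\TT$ with $\T$, deducing (4) from the geodesic character of $\xi$, and leaving (1) for last, since it is the delicate point. The common starting observation is that, because $\T g=0$ and $\TT=\T-S$, for every $Z$ one has
\[
(\TT_Z g)(X,Y)=g(S_Z X,Y)+g(X,S_Z Y).
\]
For $X\in\mathcal{D}$ the defining hypothesis $\TT_X g=0$ then gives $g(S_X Y,Z)+g(Y,S_X Z)=0$ for all $Y,Z$; since $(S_X\cdot g)(Y,Z)=-g(S_X Y,Z)-g(Y,S_X Z)$, this is precisely statement (2), i.e. $S_X$ is $g$-skew for $X\in\mathcal{D}$.

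For (3) I would split $S_X Y=\T_X Y-\TT_X Y$ and pair with $\xi$. The term $g(\TT_X Y,\xi)$ vanishes for $X,Y\in\mathcal{D}$: from $(\TT_X g)(Y,\xi)=0$ and $\TT_X\xi=0$ (together with $g(Y,\xi)=0$) one gets $g(\TT_X Y,\xi)=0$, which is just the total geodesy of the leaves for $\TT$ recorded in the proof of Theorem~\ref{CO1.teorema principal}. Hence $g(S_X Y,\xi)=g(\T_X Y,\xi)$, and the latter equals $g(\mathrm{II}(X,Y),\xi)$ by the definition $\mathrm{II}(X,Y)=(\T_X Y)^{\perp}$ of the second fundamental form.

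For (4), the relation $\TT\xi=0$ yields $\TT_\xi\xi=0$, so $S_\xi\xi=\T_\xi\xi$, and it remains to prove that $\xi$ is a geodesic vector field, $\T_\xi\xi=0$. Here the geometric meaning of $\xi$ is used: $\xi$ is the $G$-invariant unit normal to the orbits, and by Lemma~\ref{lemma:orthogonal-geodesic} its integral curves are exactly the normal geodesics $t\mapsto\exp(t\xi_p)$ (equivalently, $\xi$ is the gradient of the distance to a fixed orbit, a unit hence geodesic field). Therefore $\xi$ is tangent to geodesics, $\T_\xi\xi=0$, and $S_\xi\xi=0$.

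The main obstacle is (1), $\TT_X S=0$ for $X\in\mathcal{D}$; a direct tensorial computation is awkward, since neither $\TT g$ nor $\TT S$ need vanish in the $\xi$-direction, so I would instead argue by invariance. By Theorem~\ref{CO1.teorema principal} and Theorem~\ref{kow}, the group $G$ acts transitively on each leaf $N$ by maps that are simultaneously isometries and transvections of $\TT$; hence $G$ preserves $g$, $\xi$ and $\TT$, and therefore preserves $S=\T-\TT$. Moreover $N$ is totally geodesic for $\TT$, so $\TT$ restricts to a connection on $N$ whose geodesics are the curves $\exp(tX)\cdot p$ with $X\in\m$, which is exactly the canonical connection of the reductive homogeneous space $N=G/H$. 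Since parallel transport of the canonical connection along such geodesics coincides with the differentials of the corresponding one-parameter subgroups (see~\cite[Vol.~2, Ch.~X]{KN1963}), every $G$-invariant tensor is $\TT$-parallel along $N$; applied to $S$ this gives $\TT_X S=0$ for all $X$ tangent to the leaf, i.e. for all $X\in\mathcal{D}$. The points demanding care are that parallel transport along curves in $N$ preserves the splitting $TM|_N=TN\oplus\mathbb{R}\xi$ (a consequence of $\TT\xi=0$ and the total geodesy of $N$), so that the leafwise covariant derivative of $S$ is well defined, and that $\TT|_N$ is genuinely the canonical connection of $N$.
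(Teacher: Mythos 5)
Your proof is correct and follows essentially the same route as the paper's: items (2) and (3) are the paper's computations almost verbatim, (4) rests, as in the paper, on $\xi$ being a Levi-Civita geodesic field (which both you and the paper import from the structure theory rather than from the equations~\eqref{CO1.eq:ASCO1} alone), and (1) is exactly the paper's mechanism, namely that $\TT$-parallel transport along curves in a leaf is realized by transvections that are isometries (because $\TT_X g=0$), hence preserve $\T$, $\TT$ and therefore $S=\T-\TT$. The one refinement worth noting: since a CO1-AS manifold is defined by the equations alone, in (1) you should appeal to the local transvection pseudo-group of Theorem~\ref{CO1.teorema local} (or argue directly with leafwise parallel transports, as the paper's terser proof does) rather than to Theorems~\ref{CO1.teorema principal} and~\ref{kow}, which presuppose $M$ connected, simply connected and complete with $\TT$ complete.
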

\begin{proof}
\textbf{1.} Since $\TT_X g = 0$, every parallel transport defined by $\TT$ along curves belonging to one leaf preserves the Levi-Civita connection and the connection $\nabla$. Therefore, they also preserve the difference tensor $S = \T - \TT$, that is, $\TT_X S = 0$.

\textbf{2.} From $\TT_X g = 0$ and $\T g = 0$, we have $(\T -\TT)_X g = S_X g = 0$.

\textbf{3.} First, $g(S_XY,\xi) = g((\T - \TT)_XY,\xi)=g(\T_XY,\xi) -g( \TT_XY,\xi)$. From $\TT_X g = 0$, this is equal to $g(\T_XY,\xi) + g( Y, \TT_X \xi)= g((\T_XY)^{\perp},\xi) =g(\mathrm{II}(X,Y),\xi)$.

\textbf{4.} It is a consequence of the fact that $\xi$ is a geodesic vector field and $\TT \xi = 0$. 
\end{proof}


\section{Decomposition of cohomogeneity one Structures} \label{s:decom-CO1-AS}


We now explore the infinitesimal models associated with CO1-AS manifolds as well as their classification.

Let $(V,g)$ be a vector space of dimension $n+1$, $n\in\N$,  endowed with a positive definite inner product $g$ and a unit vector $\xi$. Let
\begin{equation*}
\curv \colon V \wedge V \to \mathrm{End}(V), \quad \tors \colon V \to \mathrm{End}(V),
\end{equation*}
be two linear homomorphisms. We say that $(\curv,\tors)$ is an  \textit{infinitesimal cohomogeneity one model} if it satisfies
\begin{equation*}
\begin{alignedat}{111}
	&\tors_XY +\tors_YX = 0,							\\
	&\curv_{XY}Z + \curv_{YX}Z = 0,					\\
	&\curv_{XY} \cdot \tors = \curv_{XY} \cdot \curv = 0,\\
	&\SC{XYZ} \curv_{XY}Z + \tors_{\tors_XY}Z = 0,	\\
	&\SC{XYZ} \curv_{\tors_XY Z} = 0,		\\
	&\curv_{XY}\cdot \xi = 0, \quad   
\end{alignedat}
\end{equation*}
and, for all $X$, $Y \in D =  \mathcal{L}(\xi)^\perp$, 
\begin{equation*}
\begin{alignedat}{111}
	&\curv_{XY}\cdot g = 0,  \\
	&\curv_{XY}\cdot S = 0,  \\
	&\tors_{X}Y \in D,            
\end{alignedat}
\end{equation*}
where $\SC{XYZ}$ is the cyclic sum, and the dot stands for the action of $\curv_{XY}$ on $V$ as a derivation.


 Two CO1-AS infinitesimal models $(V, \tors, \curv, \xi, g, S)$, $(V, \tors ' ,\curv', \xi ', g', S')$ are said to be isomorphic if there is an isometry
\begin{equation*}
    f \colon (V,g) \to (V',g')
\end{equation*}
such that,
\begin{equation*}
    f\, \curv = \curv', \quad f\, \tors = \tors', \quad f (\xi) = \xi , \quad f\, S = S'.
\end{equation*}

\begin{remark}
In particular, at any point $p$ of a CO1-AS manifold $M$ there is an infinitesimal cohomogeneity one model by taking $V = T_p M$, $g_p$, $\xi_p$, $\curv_p$ and $\tors_p$. According to Theorem~\ref{CO1.teorema local}, two points $p$ and $p'$ in the same leaf of the foliation of $M$ define isomorphic infinitesimal cohomogeneity one models.
\end{remark}

Given an infinitesimal cohomogeneity one
model, we define 
\begin{equation*}
\mathcal{S}(V) =\{ S \in V^* \otimes \gl(V) : \: S_X \cdot g = 0,\, S_{\xi} \xi = 0, \, \forall \, X\in D\}
\end{equation*}
as the \emph{space of infinitesimal cohomogeneity one structures}. 
From the decomposition $V=U\oplus D$ as irreducible $\SO(n)$-submodules, $U=\mathcal{L}(\xi)$, $D=U^\perp$, we have
$$
\mathcal{S}(V)=\mathcal{S}_D(V) + \mathcal{S}_U(V)
$$
with 
\begin{align*}
\mathcal{S}_D(V) &{}=\{ S \in D^* \otimes \gl(V) :\: S_X \cdot g = 0\} \simeq D^* \otimes \so(V)=D^*\otimes V\wedge V,\\
\mathcal{S}_U(V) &{}=\{ S \in U^* \otimes \gl(V) :\: S_\xi \xi = 0 \}.
\end{align*}
We now explore each of these $\SO(n)$-submodules. As usual, we can identify spaces and their duals through the metric $g$.

We begin with $\mathcal{S}_D(V)$. It can be decomposed in  $\SO(n)$-submodules as
\begin{align*}
\mathcal{S}_D(V) &{}= D^* \otimes V^*\wedge V^* \\
&{}= D^* \otimes D^* \wedge D^* + D^* \otimes U^* \otimes D^*,
\end{align*}
decomposition that is denoted by
\begin{align*}
\mathcal{T}(V) &{}=  D^* \otimes D^* \wedge D^* ,\\
\mathcal{II}(V) &{}=  D^* \otimes U^* \otimes D^*.
\end{align*}
As we will see when we understand these structures coming from a CO1-AS manifold, this notation is motivated by the fact that $\mathcal{T}(V)$ is the space of all possible infinitesimal homogeneous structure tensors of the leaves and $\mathcal{II}(V)$ is the space of all possible second fundamental forms of the leaves at each point. The decomposition of these modules into irreducible submodules can be derived from the results in the literature. They are as follows.


\begin{proposition}
The submodule $\mathcal{T}(V)$ decomposes as $\SO(n)$-irreducible submodules as,
\begin{equation*}
	\mathcal{T}(V)=\mathcal{T}_1(V)+\mathcal{T}_2(V)+\mathcal{T}_3(V),
\end{equation*}
with explicit expressions,
\begin{equation*}
	\begin{alignedat}{111}
		\mathcal{T}_1(V) &  = \Bigl\{ S \in \mathcal{T}(V) \,:\,S_{XYZ}=g(X,Y)\theta(Z)-g(X,Z)\theta(Y), \,\theta\in D^* \Bigr\},\\
		\mathcal{T}_2(V) & = \Bigl\{ S \in \mathcal{T}(V) \,:\,\SC{XYZ} S_{XYZ}=0, \,c_{12}(S)=0\Bigr\},\\
		\mathcal{T}_3(V) & = \Bigl\{ S \in \mathcal{T}(V) \,:\,S_{XYZ}+S_{YXZ}=0 \Bigr\},
	\end{alignedat}
\end{equation*}
where $c_{12}(S)(Z)=\sum_{i=1}^{n}S_{e_ie_iZ}$ for any orthonormal basis $\{e_1,\ \ldots\ ,e_{n}\}$ of $D$. 
\end{proposition}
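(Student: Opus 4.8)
The plan is to recognize $\mathcal{T}(V)=D^*\otimes(D^*\wedge D^*)$ as the classical space of homogeneous structure tensors of Tricerri--Vanhecke, whose $\SO(n)$-decomposition into three pieces is well known, and then to verify that the three explicit subspaces in the statement are exactly these pieces, that they are mutually orthogonal and exhaust $\mathcal{T}(V)$, and that each is irreducible. Throughout I would fix an orthonormal basis $\{e_1,\dots,e_n\}$ of $D$, use $g$ to identify $D$ with $D^*$, and equip $\mathcal{T}(V)$ with the induced $\SO(n)$-invariant inner product $\langle S,S'\rangle=\sum_{i,j,k}S_{e_ie_je_k}S'_{e_ie_je_k}$. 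All the operations below are manifestly built from $g$ and permutations of slots, hence $\SO(n)$-equivariant, so their kernels and images are automatically $\SO(n)$-submodules; this takes care of invariance at once.

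First I would introduce the two natural equivariant maps on $\mathcal{T}(V)$: the contraction $c_{12}(S)(Z)=\sum_i S_{e_ie_iZ}$ into $D^*$, and the cyclic sum $a(S)_{XYZ}=\SC{XYZ}S_{XYZ}$, which (because $S$ is already skew in its last two slots) is totally skew and thus lands in $D^*\wedge D^*\wedge D^*$. The subspace $\mathcal{T}_1(V)$ is the image of the inclusion $D^*\to\mathcal{T}(V)$, $\theta\mapsto S^\theta$ with $S^\theta_{XYZ}=g(X,Y)\theta(Z)-g(X,Z)\theta(Y)$; a one-line computation gives $c_{12}(S^\theta)=(n-1)\theta$, so $\mathcal{T}_1\cong D^*$ and $c_{12}$ is an isomorphism on it. The condition $S_{XYZ}+S_{YXZ}=0$ defining $\mathcal{T}_3(V)$ adds skewness in the first two slots to the built-in skewness in the last two, hence cuts out precisely the totally skew tensors, i.e. the image of $\Lambda^3D^*\hookrightarrow\mathcal{T}(V)$; on these $c_{12}$ vanishes and $\tfrac13 a$ acts as the identity. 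Finally $\mathcal{T}_2(V)=\ker c_{12}\cap\ker a$ by definition.

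Next I would prove the orthogonal direct sum $\mathcal{T}(V)=\mathcal{T}_1\oplus\mathcal{T}_2\oplus\mathcal{T}_3$. The key identity is $\langle S^\theta,S\rangle=2\langle\theta,c_{12}(S)\rangle$, obtained by expanding $S^\theta$ and using $c_{13}(S)=-c_{12}(S)$; it yields $\mathcal{T}_1\perp\mathcal{T}_2$ and $\mathcal{T}_1\perp\mathcal{T}_3$ at once, since $c_{12}$ kills both $\ker c_{12}$ and the totally skew tensors. For $\mathcal{T}_2\perp\mathcal{T}_3$ I would note that pairing against a totally skew tensor factors through the totally skew part $\tfrac13 a(S)$ of $S$, which is zero on $\ker a$. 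To see the three subspaces span, given $S$ I subtract its $\mathcal{T}_1$-component $S^\theta$ with $\theta=\tfrac1{n-1}c_{12}(S)$ and its skew part $\tfrac13 a(S)$, and check directly that the remainder is annihilated by both $c_{12}$ and $a$, so lies in $\mathcal{T}_2$. A dimension count closes it: $\dim\mathcal{T}_1=n$, $\dim\mathcal{T}_3=\binom n3$, and $\dim\mathcal{T}_2=\tfrac{n(n-2)(n+2)}{3}$ sum to $\tfrac{n^2(n-1)}{2}=\dim\mathcal{T}(V)$.

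The last and hardest step is irreducibility. The modules $\mathcal{T}_1\cong D^*$ (the standard representation) and $\mathcal{T}_3\cong\Lambda^3D^*$ are $\SO(n)$-irreducible by the standard representation theory of $\SO(n)$, with the usual caveats in the low and middle-dimensional cases where $\Lambda^k$ splits into self-dual and anti-self-dual parts. For $\mathcal{T}_2$ the cleanest argument is to identify it with the Cartan product inside $D^*\otimes\Lambda^2 D^*$: since $\Lambda^2 D^*\cong\so(V)$ is the adjoint module, the tensor product decomposes as the top (highest-weight) component together with the trace part $D^*$ and the skew part $\Lambda^3D^*$, and having removed exactly these two via $\ker c_{12}\cap\ker a$, what remains is the irreducible Cartan component. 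I expect the irreducibility of $\mathcal{T}_2$ to be the main obstacle, since it genuinely requires a weight/character computation (or an appeal to the classification of $\SO(n)$-modules) rather than the elementary slot manipulations used for everything else; this is precisely the point at which I would lean on the Tricerri--Vanhecke reference instead of reproving it from scratch.
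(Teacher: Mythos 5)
Your proof is correct, but note that the paper's own proof is a single line: the result ``follows directly from [TV1983, Thm.~3.1] applied to our notation'' --- that is, the authors simply observe that $\mathcal{T}(V)=D^*\otimes D^*\wedge D^*$ with $\dim D=n$ is verbatim the Tricerri--Vanhecke space of homogeneous structure tensors and import their decomposition wholesale. What you do differently is reconstruct the Tricerri--Vanhecke argument itself: the equivariant maps $c_{12}$ and the cyclic sum $a$, the identities $c_{12}(S^\theta)=(n-1)\theta$, $c_{13}(S)=-c_{12}(S)$, $a(S^\theta)=0$, the orthogonality relations, the explicit complement computation (subtract $S^\theta$ with $\theta=\tfrac{1}{n-1}c_{12}(S)$ and the skew part $\tfrac13 a(S)$, then check the remainder lies in $\ker c_{12}\cap\ker a$), and the dimension count $n+\binom{n}{3}+\tfrac{n(n-2)(n+2)}{3}=\tfrac{n^2(n-1)}{2}$ --- all of which check out. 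You defer only the irreducibility of $\mathcal{T}_2$ (the Cartan component) to representation theory or to the reference, which is exactly the one genuinely non-elementary step; this is a fair division of labour, and your self-contained treatment buys explicit projection formulas that the bare citation does not. Two small blemishes, neither fatal: you write $\Lambda^2 D^*\cong\so(V)$, but it should be $\so(D)\cong\so(n)$, since here $\dim V=n+1$; and the self-dual/anti-self-dual caveat for $\Lambda^3$ is a red herring in the real category --- for $\SO(6)$ the splitting of $\Lambda^3$ occurs only after complexification (the Hodge star there squares to $-1$), so $\mathcal{T}_3\cong\Lambda^3 D^*$ is irreducible over $\R$ in every dimension where it is nonzero, which is the setting relevant to the real classification used in the paper.
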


\begin{proof}
    The proof follows directly from~\cite[Thm.~3.1]{TV1983} applied to our notation. 
\end{proof}


\begin{proposition} \label{thm:II(V)}
The submodule $\mathcal{II}(V)$ decomposes as $\SO(n)$-irreducible submodules as,
\begin{equation*}
	\mathcal{II}(V)=\mathcal{II}_1(V)+\mathcal{II}_2(V)+\mathcal{II}_3(V),
\end{equation*}
with explicit expressions,
\begin{equation*}
	\begin{alignedat}{1}
		\mathcal{II}_1(V) &  = \Bigl\{ S \in \mathcal{II}(V):\: S_{X Y Z} =  \lambda g(X,Z)g(Y,\xi) , \,\lambda\in \R \Bigr\},\\
		\mathcal{II}_2(V) & = \Bigl\{ S \in \mathcal{II}(V):\: S_{XYZ} = g(Y,\xi) S_{X\xi Z}, \, S_{X\xi Z}= S_{Z \xi X}, \, c_{13}(S)=0 \Bigr\},\\
		\mathcal{II}_3(V) & = \Bigl\{ S \in \mathcal{II}(V):\: S_{XYZ} = g(Y,\xi) S_{X\xi Z}, \, S_{X\xi Z}= -S_{Z \xi X} \Bigr\},
	\end{alignedat}
\end{equation*}
where $c_{13}(S)(Y)=\sum_{i=1}^{n}S_{e_i Y e_i}$ for any orthonormal basis $\{e_1,\ \ldots\ ,e_{n}\}$ of $D$. 
\end{proposition}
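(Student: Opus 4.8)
The plan is to reduce the statement to the classical decomposition of the tensor square of the standard representation of $\SO(n)$. Since $U=\mathcal{L}(\xi)$ is one-dimensional and pointwise fixed by $\SO(n)$, the factor $U^*$ in $\mathcal{II}(V)=D^*\otimes U^*\otimes D^*$ is a trivial $\SO(n)$-module, canonically trivialized by $\xi$. Thus the map sending $S\in\mathcal{II}(V)$ to the bilinear form $b_S$ on $D$ defined by $b_S(X,Z)=S_{X\xi Z}$ is an $\SO(n)$-equivariant isomorphism $\mathcal{II}(V)\cong D^*\otimes D^*$, and every $S\in\mathcal{II}(V)$ is recovered from $b_S$ by $S_{XYZ}=g(Y,\xi)\,b_S(X,Z)$, which is exactly the relation appearing in the definitions of $\mathcal{II}_2(V)$ and $\mathcal{II}_3(V)$.

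First I would record the well-known $\SO(n)$-irreducible decomposition
\begin{equation*}
D^*\otimes D^* = \R\,g \;\oplus\; \mathrm{Sym}^2_0(D^*) \;\oplus\; D^*\wedge D^*,
\end{equation*}
into the invariant line spanned by the metric $g$, the traceless symmetric forms, and the skew-symmetric forms; for $n\ge 3$ these three summands are irreducible, and a dimension count confirms the splitting, since $n^2 = 1 + \bigl(\tfrac{n(n+1)}{2}-1\bigr) + \tfrac{n(n-1)}{2}$.

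Next I would match each summand with the stated submodule under $b_S$. The splitting into symmetric and skew bilinear forms corresponds to the conditions $S_{X\xi Z}=S_{Z\xi X}$ and $S_{X\xi Z}=-S_{Z\xi X}$, so the skew block is exactly $\mathcal{II}_3(V)$. The symmetric block $\mathrm{Sym}^2(D^*)$ splits further as $\R\,g\oplus\mathrm{Sym}^2_0(D^*)$: the invariant line is spanned by $b_S=g$, that is $S_{X\xi Z}=\lambda\,g(X,Z)$, which after multiplication by $g(Y,\xi)$ is precisely $\mathcal{II}_1(V)$, while its traceless complement, singled out by $c_{13}(S)=0$, is $\mathcal{II}_2(V)$. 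Since $c_{13}(S)(Y)=g(Y,\xi)\sum_{i}S_{e_i\xi e_i}$, the condition $c_{13}(S)=0$ is exactly the vanishing of the trace $\sum_i b_S(e_i,e_i)$ of the symmetric form, so the identification is consistent.

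The only point requiring care is this bookkeeping: verifying that the three explicit defining conditions cut out precisely the trace line, the symmetric-traceless block, and the skew block, and that the three subspaces are mutually disjoint and together span $\mathcal{II}(V)$. Once this is done, irreducibility of the three pieces is immediate from classical $\SO(n)$-representation theory (for small $n$ the listing remains correct, one of the pieces merely becoming trivial), so no genuine obstacle arises beyond the identification $\mathcal{II}(V)\cong D^*\otimes D^*$.
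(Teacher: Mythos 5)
Your proposal is correct and follows essentially the same route as the paper: both identify $\mathcal{II}(V)\cong D^*\otimes D^*$ via $S\mapsto S_{\cdot\,\xi\,\cdot}$, split into symmetric and skew-symmetric parts, and then extract the trace line from the symmetric block to obtain $\mathcal{II}_1(V)$, $\mathcal{II}_2(V)$, $\mathcal{II}_3(V)$. Your version merely adds explicit bookkeeping (the dimension count and the verification that $c_{13}(S)(Y)=g(Y,\xi)\sum_i S_{e_i\xi e_i}$) that the paper leaves implicit.
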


\begin{proof}
    Recall that $\mathcal{II}(V) = D^* \otimes U^* \otimes D^*$. Indeed, for any $S\in D^*\otimes U^*\otimes D^*$, we have $S_{XYZ} = g(Y,\xi) S_{X \xi Z}$, and $S_{\cdot \ \xi \ \cdot }\in D^*\otimes D^*$. This space decomposes into irreducible $\mathrm{SL} (D)$ submodules as
    \begin{equation*}
        \mathcal{II}(V) = D^* \wedge D^* + S^2 D^*, 
    \end{equation*}
    in symmetric and skew-symmetric endomorphisms, respectively. To get the $\SO(D) \subset \mathrm{SL}(D)$ irreducible submodules we have to take traces in the endomorphisms, i.~e.,
    \[
        \mathcal{II}(V) = D^* \wedge D^* + \{ S_{X\xi Z}\in S^2 D^*:\: \mathrm{c}_{12}(S)(\xi) = 0\} + \{S_{X\xi Z} \in S^2 D^*:\: \mathrm{c}_{12}(S)(\xi) \neq 0\}
    \]
    and this decomposition into irreducible submodules yields the three expressions $\mathcal{II}_3(V)$, $\mathcal{II}_2(V)$ and $\mathcal{II}_1(V)$, respectively. 
\end{proof}


We now analyse
$$ \mathcal{S}_U(V) =\{ S \in U^* \otimes \gl(V) :\: S_\xi \xi = 0 \} = U^* \otimes V^* \otimes V.
$$
From $V=U\oplus D$ we get
$$U^* \otimes V^* \otimes V^* = U^* \otimes U^* \otimes V^* + U^* \otimes D^* \otimes D^* + U^* \otimes D^* \otimes U^*,
$$
and since $S_\xi \xi=0$, the space $   \mathcal{S}_U(V) $
is exactly 
$$
\mathcal{S}_U(V) =U^* \otimes D^* \otimes D^* + U^* \otimes D^* \otimes U^*,
$$
decomposition that is denoted by
\begin{align*}
\mathcal{Z}(V)&= U^* \otimes D^* \otimes D^* ,\\
S_U ^1(V) &= U^* \otimes D^* \otimes U^*.
\end{align*}


\begin{proposition}
The submodule $\mathcal{Z}(V)$ decomposes as $\SO(n)$-irreducible submodules as,
\begin{equation*}
	\mathcal{Z}(V)=\mathcal{Z}_1(V)+\mathcal{Z}_2(V)+\mathcal{Z}_3(V),
\end{equation*}
with explicit expressions,
\begin{equation*}
	\begin{alignedat}{111}
		\mathcal{Z}_1(V) &  = \Bigl\{ S \in \mathcal{Z}(V):\: S_{X Y Z} = \lambda g(Y,Z)g(X,\xi), \, \lambda \in \R \Bigr\},\\
		\mathcal{Z}_2(V) & = \Bigl\{ S \in \mathcal{Z}(V):\: S_{XYZ} = g(X,\xi) S_{\xi Y Z}, \,S_{\xi Y Z}=S_{ \xi YX}, \,c_{23}(S)=0 \Bigr\},\\
		\mathcal{Z}_3(V) & = \Bigl\{ S \in \mathcal{Z}(V):\: S_{XYZ} = g(X,\xi) S_{\xi Y Z}, \,S_{\xi Y Z}= - S_{ \xi ZY}\Bigr\},
	\end{alignedat}
\end{equation*}
where $c_{23}(S)(X)=\sum_{i=1}^{n}S_{Xe_ie_i}$ for any orthonormal basis $\{e_1,\ \ldots\ ,e_{n}\}$ of $D$.
\end{proposition}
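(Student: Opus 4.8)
The plan is to exploit that the relevant group $\SO(n)$ acts here as $\SO(D)$ and fixes the unit vector $\xi$, so the factor $U^*=\mathcal{L}(\xi)^*$ is the trivial one-dimensional module. Consequently $\mathcal{Z}(V)=U^*\otimes D^*\otimes D^*$ is isomorphic, as an $\SO(n)$-module, to $D^*\otimes D^*$ via the correspondence $S\mapsto T$ with $T_{YZ}=S_{\xi Y Z}$ and inverse $S_{XYZ}=g(X,\xi)\,T_{YZ}$. This is exactly the situation already treated in Proposition~\ref{thm:II(V)} for $\mathcal{II}(V)=D^*\otimes U^*\otimes D^*$, the only difference being which two slots carry the $D$-factors (here the second and third rather than the first and third). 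I would therefore present the argument in close parallel to that proof.

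Then I would decompose $D^*\otimes D^*$ under $\SO(D)$ in two stages. First, split into symmetric and skew-symmetric parts, $D^*\otimes D^*=S^2 D^*+D^*\wedge D^*$; the skew part $D^*\wedge D^*\cong\so(D)$ is $\SO(n)$-irreducible (for $n\geq 3$) and corresponds, under $S_{XYZ}=g(X,\xi)\,S_{\xi Y Z}$ with $S_{\xi Y Z}=-S_{\xi Z Y}$, to $\mathcal{Z}_3(V)$. Second, extract the trace from the symmetric part using the metric $g|_D$: the trace line $\{\lambda g:\lambda\in\R\}$ is the trivial module and, rewriting $T_{YZ}=\lambda g(Y,Z)$ as $S_{XYZ}=\lambda g(Y,Z)g(X,\xi)$, gives $\mathcal{Z}_1(V)$; the traceless symmetric complement $S^2_0 D^*$ is $\SO(n)$-irreducible (for $n\geq 3$) and, characterized by the symmetry $S_{\xi Y Z}=S_{\xi Z Y}$ together with the vanishing contraction $c_{23}(S)=0$, is $\mathcal{Z}_2(V)$. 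The key bookkeeping point to verify is that $c_{23}$, which contracts the two $D$-indices, restricts to the ordinary trace on $S^2 D^*$: indeed $c_{23}(S)(X)=\sum_i S_{X e_i e_i}=g(X,\xi)\sum_i T_{e_i e_i}=g(X,\xi)\tr(T)$, so $c_{23}(S)=0$ if and only if $\tr T=0$.

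The verification is routine linear algebra once the identification with $D^*\otimes D^*$ is in place, so I expect no genuine obstacle; the only points requiring care are the tracking of the index slots and noting where $n\geq 3$ is used for irreducibility of $S^2_0 D^*$ and $\so(D)$. For this reason I would state the conclusion as being analogous to Proposition~\ref{thm:II(V)}, interchanging the roles of the $U^*$ and the first $D^*$ factors, and only spell out the explicit tensorial forms to confirm that the three irreducible pieces coincide with $\mathcal{Z}_1(V)$, $\mathcal{Z}_2(V)$, and $\mathcal{Z}_3(V)$ as written.
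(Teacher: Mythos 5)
Your proposal is correct and follows essentially the same route as the paper, which simply observes that $\mathcal{Z}(V)\simeq\mathcal{II}(V)$ and transfers the decomposition of Proposition~\ref{thm:II(V)} by permuting the index slots; your explicit identification $S_{XYZ}=g(X,\xi)T_{YZ}$ with $T\in D^*\otimes D^*$, the symmetric/skew splitting, the trace extraction, and the check that $c_{23}$ restricts to $\tr(T)$ are exactly the bookkeeping the paper leaves implicit. The only addition is your (correct) remark that irreducibility of $S^2_0D^*$ and $D^*\wedge D^*$ requires $n\geq 3$, a caveat the paper omits.
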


\begin{proof}
    As $\mathcal{Z}(V) \simeq \mathcal{II}(V)$, this decomposition is analogous to Theorem~\ref{thm:II(V)} doing a permutation in the indices. 
\end{proof}


\begin{proposition}
    The submodule $S_U ^1 (V)$ is $\SO(D)$-irreducible and its explicit expression is,
    \begin{equation*}
        \mathcal{S}_U ^1(V) = \Bigl\{ S\in \mathcal{S}(V):\:  S_{XYZ} = g(Y, \eta) g(X,\xi)g(Z,\xi), \, \eta \in D \Bigr\}.
    \end{equation*}
\end{proposition}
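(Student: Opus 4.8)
The plan is to show that the space $\mathcal{S}_U^1(V) = U^* \otimes D^* \otimes U^*$ is an irreducible $\SO(D)$-module and then to verify that the displayed explicit expression describes exactly this space. First I would identify the group action: since $U = \mathcal{L}(\xi)$ is one-dimensional and $\SO(D)$ acts trivially on $U$ (elements of $\SO(D)$ fix $\xi$), the two factors $U^*$ contribute only scalars, and the module structure is carried entirely by the middle factor $D^*$. Hence $\mathcal{S}_U^1(V) = U^* \otimes D^* \otimes U^* \cong D^*$ as $\SO(D)$-modules. Since the standard representation $D^* \cong D$ of $\SO(D) = \SO(n)$ on the $n$-dimensional space $D$ is irreducible (for $n \geq 1$, and certainly for $n \geq 2$ as a real representation), irreducibility of $\mathcal{S}_U^1(V)$ follows immediately.

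Next I would pin down the explicit form. An element $S \in U^* \otimes D^* \otimes U^*$ is a tensor $S_{XYZ}$ that is nonzero only when the first and third arguments lie in $U$ and the middle argument lies in $D$. Writing the $U$-dependence through $g(X,\xi)$ and $g(Z,\xi)$ (since $\xi$ spans $U$) and recording the $D^*$ factor by pairing against some fixed $\eta \in D$ via $g(Y,\eta)$, every such $S$ takes the form
\begin{equation*}
    S_{XYZ} = g(Y,\eta)\,g(X,\xi)\,g(Z,\xi), \qquad \eta \in D,
\end{equation*}
and conversely every such expression manifestly lies in $U^* \otimes D^* \otimes U^*$. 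This establishes the claimed equality $\mathcal{S}_U^1(V) = \{ S : S_{XYZ} = g(Y,\eta)g(X,\xi)g(Z,\xi),\ \eta \in D\}$. One should also check the defining constraint of $\mathcal{S}(V)$, namely $S_\xi \xi = 0$: indeed $S_{\xi\xi Z} = g(\xi,\eta)g(\xi,\xi)g(Z,\xi) = 0$ because $\eta \in D$ is orthogonal to $\xi$, so these tensors genuinely lie in $\mathcal{S}_U(V)$.

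The only genuinely substantive point is the irreducibility, and even that is routine once the reduction to the standard representation of $\SO(n)$ is made; the main (mild) obstacle is simply being careful that $\SO(D)$ acts trivially on $U^*$, so that the tensor-product module collapses to a single copy of the standard module $D^*$ rather than something larger. Everything else is bookkeeping with the metric identifications $U^* \cong U$, $D^* \cong D$. I would therefore present the argument compactly: state the module identification $\mathcal{S}_U^1(V) \cong D^*$, invoke irreducibility of the standard $\SO(n)$-representation, and then read off the explicit parametrization by $\eta \in D$.
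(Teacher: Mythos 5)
Your proposal is correct and follows essentially the same route as the paper, which simply notes that $S_U^1(V) = U^* \otimes D^* \otimes U^* \simeq D^*$ is irreducible and that the explicit expression is direct; you have merely spelled out the details (the trivial $\SO(D)$-action on $U^*$, irreducibility of the standard representation, and the verification that $S_\xi\xi = 0$ holds). No gaps.
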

\begin{proof}
    It is direct from the fact $S_U ^1 (V) = U^* \otimes D^* \otimes U^* \simeq D^*$ and it is irreducible. Its explicit expression is direct.
\end{proof}


Therefore, we can write the space of cohomogeneity one structures as
\begin{equation*}
    \mathcal{S}(V) = \mathcal{T}(V) + \mathcal{II} (V) + \mathcal{Z}(V) + S_U ^1 (V).
\end{equation*}
From the previous four propositions, $\mathcal{S}(V)$ decomposes into the sum of ten irreducible $\mathrm{SO}(n)$-submodules. The local isometries in CO1-AS manifolds in Theorem~\ref{CO1.teorema local} give the following result.

\begin{proposition}
If the cohomogeneity one structure $S_p$ of a CO1-AS manifold $(M,g)$ at a point $p\in M$ belongs to a certain irreducible submodule or sum of irreducible submodules, then $S_{p'}$ belongs to the same submodule or sum of submodules for any other point $p'$ in the leaf of $p$.
\end{proposition}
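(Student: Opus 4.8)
The plan is to exploit the invariance of the cohomogeneity one structure tensor $S$ under the local isometries generated by $\mathcal{G}$, combined with the equivariance of the module decomposition under $\SO(n)$. The key geometric input is supplied by Theorem~\ref{CO1.teorema local}: on a CO1-AS manifold, the Lie pseudo-group $\bar{\mathcal{G}}$ of local transvections acts transitively, preserves $\xi$ and $g$, and acts on each leaf via $\mathcal{G}$. Crucially, the parallel transport of $\TT$ along curves contained in a leaf is realized by the differentials of these local isometries, and by Proposition~\ref{prop:properties}(1) we have $\TT_X S = 0$ for all $X \in \mathcal{D}$. This means $S$ is invariant under $\TT$-parallel transport within each leaf.

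First I would fix two points $p$ and $p'$ in the same leaf $N$ and choose a piecewise-smooth curve $\alpha$ in $N$ joining them. Since $N$ is totally geodesic for $\TT$ (Lemma~\ref{CO1.lem:200}) and $\TT_X S = 0$, $\TT_X g = 0$, $\TT \xi = 0$ along $\alpha$, the $\TT$-parallel transport $\tau_\alpha \colon T_p M \to T_{p'}M$ is a linear isometry that carries $\xi_p$ to $\xi_{p'}$, carries $g_p$ to $g_{p'}$, and carries $S_p$ to $S_{p'}$. In particular $\tau_\alpha$ maps $\mathcal{D}_p$ to $\mathcal{D}_{p'}$ and $U_p = \mathcal{L}(\xi_p)$ to $U_{p'}$, so it is an isomorphism of the pointed inner-product-spaces-with-distinguished-vector $(T_pM, g_p, \xi_p) \to (T_{p'}M, g_{p'}, \xi_{p'})$ intertwining the two infinitesimal models, exactly in the sense of the isomorphism of CO1-AS infinitesimal models defined just before Proposition~\ref{thm:II(V)}.

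Next I would observe that the ten irreducible submodules $\mathcal{T}_i(V)$, $\mathcal{II}_i(V)$, $\mathcal{Z}_i(V)$, $\mathcal{S}_U^1(V)$ are defined by $\SO(n)$-equivariant (indeed $\SO(D)$-equivariant) conditions written purely in terms of $g$, $\xi$, the contractions $c_{12}, c_{13}, c_{23}$, and the cyclic sum; none of these defining relations refer to anything beyond the inner-product structure and the distinguished unit vector. Therefore any linear map that is an isometry carrying $\xi_p$ to $\xi_{p'}$ identifies the decomposition $\mathcal{S}(T_pM) = \sum \mathcal{T}_i + \sum \mathcal{II}_i + \cdots$ at $p$ with the corresponding decomposition at $p'$, and sends each irreducible summand to its counterpart. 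Applying this to $\tau_\alpha$, which sends $S_p$ to $S_{p'}$, shows that if $S_p$ lies in a given summand (or sum of summands), then $S_{p'}$ lies in the image of that summand under $\tau_\alpha$, which is the corresponding summand at $p'$.

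The main point to verify carefully — and the only place where something could go wrong — is that $\TT$-parallel transport within a leaf genuinely preserves $S$, and that this transport is available between any two points of the leaf. The first is exactly Proposition~\ref{prop:properties}(1), and the second follows because each leaf is a (locally) homogeneous space on which broken $\TT$-geodesics connect arbitrary points, so connectivity of the leaf yields the required curve $\alpha$. One should note that a priori $\tau_\alpha$ might depend on the choice of $\alpha$, but this is irrelevant: we only need \emph{existence} of one isometry of infinitesimal models sending $S_p$ to $S_{p'}$, and any single $\tau_\alpha$ does the job. Since membership in an $\SO(n)$-submodule is invariant under such isometries, the conclusion follows for every pair of points in the leaf, completing the proof.
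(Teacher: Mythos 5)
Your proof is correct and takes essentially the same route as the paper: the paper presents this proposition as an immediate consequence of the local isometries furnished by Theorem~\ref{CO1.teorema local}, and since those local transvections are by construction exactly the $\TT$-parallel transports along curves in the leaves (which preserve $g$, $\xi$, and, by Proposition~\ref{prop:properties}(1), the tensor $S$), your parallel-transport argument is simply the explicit unpacking of that justification. The observation that the ten irreducible summands are cut out by conditions involving only $g$, $\xi$, contractions, and cyclic sums --- hence are permuted correctly by any isometry of infinitesimal models fixing $\xi$ --- matches the paper's intended reasoning.
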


We now present a few simple geometric results stemming from the classification above. It is easy to see that the  projections to each submodule are
\begin{align*}
    \Pi_{\mathcal{T}(V)} (S)_{XYZ} &= \sum _{i,j =0} ^{n} \ g(X, e_i) g(Y, e_j) S_{e_i e_j Z}, \\
    \Pi_{\mathcal{II}(V)} (S)_{XYZ} &= \sum _{i =0} ^{n} \ g(X, e_i) g(Y, \xi) S_{e_i \xi Z}, \\
    \Pi_{\mathcal{Z}(V)} (S)_{XYZ} &= \sum _{i =0} ^{n} \ g(Y, e_i) g(X, \xi) S_{e_i \xi Z}, \\[1ex]
    \Pi_{S_U ^1(V)} (S)_{XYZ} &= g(X, \xi) g(Z, \xi) S_{ \xi Y \xi}.
\end{align*}

\begin{proposition}
Let $(M,g,\xi)$ be a CO1-AS manifold and $p\in M$. 

If $\Pi_{\mathcal{T}(V)} (S_p)_{XYZ}= 0$, then the leaf of $p$ is a locally symmetric Riemannian manifold.

If $\Pi_{\mathcal{T}(V)} (S_p)_{XYZ} \in \mathcal{T}_1(V)$, then the leaf of $p$ is locally isometric to the real hyperbolic space.
 
If $\Pi_{\mathcal{II}(V)} (S_p)_{XYZ} = 0$, then the leaf of $p$ is a totally geodesic submanifold. 
\end{proposition}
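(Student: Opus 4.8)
The three assertions all concern the single leaf $N$ through $p$, and the plan is to convert each hypothesis on the projection of $S_p$ into a statement valid on all of $N$ by parallelism, and then to invoke a known characterization. The starting point is a Gauss-type decomposition of $S$ along the leaf. Since $\TT\xi=0$ and $\TT_Xg=0$ for $X\in\mathcal{D}$, the connection $\TT$ preserves both $\xi$ and the splitting $TM=\mathcal{L}(\xi)\oplus\mathcal{D}$; by Lemma~\ref{CO1.lem:200} the leaf $N$ is totally geodesic for $\TT$, so $\TT$ restricts to a connection $\TT^N$ on $N$ and $\TT_XY=\TT^N_XY$ is tangent whenever $X,Y\in\mathcal{D}$. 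Using the ambient Gauss formula $\T_XY=\T^N_XY+\mathrm{II}(X,Y)$, I get for $X,Y\in\mathcal{D}$
\[
S_XY=(\T-\TT)_XY=\underbrace{(\T^N-\TT^N)_XY}_{\in\,\mathcal{D}}+\underbrace{\mathrm{II}(X,Y)}_{\in\,\mathcal{L}(\xi)}=S^N_XY+\mathrm{II}(X,Y),
\]
where $S^N=\T^N-\TT^N$ is the homogeneous structure tensor of $N$ (recall that $N$ is a reductive Riemannian homogeneous space with canonical connection $\TT^N$, by Theorem~\ref{CO1.teorema local}). Comparing tangential and normal parts against the explicit projection formulas gives $\Pi_{\mathcal{T}(V)}(S_p)=S^N_p$, while Proposition~\ref{prop:properties}(3) and the skew-symmetry $S_X\cdot g=0$ identify $\Pi_{\mathcal{II}(V)}(S_p)$ with $-g(\mathrm{II}_p(\cdot,\cdot),\xi)$; in particular $\Pi_{\mathcal{II}(V)}(S_p)=0$ is equivalent to $\mathrm{II}_p=0$.

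The second step is propagation over the whole leaf. By Proposition~\ref{prop:properties}(1) we have $\TT_XS=0$ for $X\in\mathcal{D}$; and since the submodules $\mathcal{T}(V),\mathcal{II}(V),\dots$ are $\SO(n)$-invariant while the holonomy of $\TT^N$ lies in $\SO(\mathcal{D})$ (the restriction $\TT^N$ is metric, because $\TT_Xg=0$ and $\TT$ fixes $\xi$), the orthogonal projections onto these submodules commute with $\TT_X$. Hence each $\Pi_\bullet(S)$ is $\TT^N$-parallel along the connected leaf $N$. Consequently each of the conditions $\Pi_{\mathcal{T}(V)}(S_p)=0$, $\Pi_{\mathcal{T}(V)}(S_p)\in\mathcal{T}_1(V)$, and $\Pi_{\mathcal{II}(V)}(S_p)=0$ holds at one point of $N$ if and only if it holds at every point, i.e.\ respectively $S^N\equiv0$ on $N$, $S^N\in\mathcal{T}_1$ on $N$, or $\mathrm{II}\equiv0$ on $N$.

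Finally I would conclude each case from the appropriate classical result. If $S^N\equiv0$ then $\T^N=\TT^N$; since $N$ is Riemannian homogeneous, its canonical connection satisfies the Ambrose--Singer equation $\TT^N R^N=0$ for the Levi-Civita curvature $R^N$, whence $\T^N R^N=0$ and $N$ is locally symmetric by Cartan's criterion. If $S^N\in\mathcal{T}_1$ at every point, then $S^N$ is a homogeneous structure of class $\mathcal{T}_1$ on $N$, and by the Tricerri--Vanhecke characterization of this class~\cite{TV1983} the leaf is locally isometric to a real hyperbolic space $\RH^n$ (the degenerate subcase $\theta=0$ being the flat/symmetric one). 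If $\mathrm{II}\equiv0$ then $N$ is totally geodesic in $(M,g)$ by definition.

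The only genuinely non-formal point, and thus the main obstacle, is the clean identification $\Pi_{\mathcal{T}(V)}(S)=S^N$ together with the fact that $\TT^N$ is truly the canonical (homogeneous-structure) connection of the leaf: this is what guarantees $\TT^N R^N=0$ and so lets the vanishing of $S^N$ upgrade to Cartan's criterion for local symmetry. Both rest on the totally-geodesic-for-$\TT$ property of the leaves (Lemma~\ref{CO1.lem:200}) and on the homogeneity of the leaves established in Theorem~\ref{CO1.teorema local}. Everything else is $\SO(n)$-equivariance bookkeeping that makes the pointwise hypotheses propagate across the leaf.
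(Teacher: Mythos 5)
Your argument is correct and is essentially the proof the paper leaves implicit: the proposition is stated there without proof, as one of the ``simple geometric results stemming from the classification,'' and your chain of steps is exactly the intended one --- the Gauss-type splitting $S_XY=S^N_XY+\mathrm{II}(X,Y)$ for $X,Y\in\mathcal{D}$ (resting on Lemma~\ref{CO1.lem:200} and on the fact, established in the proof of Theorem~\ref{CO1.teorema local}, that $\TT$ restricts to the canonical connection of each homogeneous leaf, so that $S^N$ is a genuine Tricerri--Vanhecke homogeneous structure and $\mathcal{T}(V)$, $\mathcal{II}(V)$ carry respectively the homogeneous structures and the second fundamental forms of the leaves), then propagation of the pointwise hypothesis along the leaf (which the paper already records in the proposition immediately preceding this one, and which your $\TT_XS=0$ argument via Proposition~\ref{prop:properties}(1) re-derives), and finally the classical conclusions: $S^N=0$ gives $\nabla^NR^N=\TT^NR^N=0$ and local symmetry, class $\mathcal{T}_1$ gives the real hyperbolic space by \cite[Thm.~5.2]{TV1983}, and $\mathrm{II}\equiv 0$ is the definition of totally geodesic. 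The only interpretive points --- both handled correctly by you --- are the degenerate subcase you flag in the second assertion (since $0\in\mathcal{T}_1(V)$, the conclusion ``locally isometric to $\RH(n)$'' requires the linear-type form $\theta$ to be nonzero, its norm being constant along the leaf by parallelism, with $\theta=0$ reducing to the first assertion) and the reading of the displayed projection formula for $\Pi_{\mathcal{T}(V)}$ as also restricting the third slot to $D$, which is clearly the paper's intent.
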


\section{The canonical cohomogeneity one structure} \label{s:difference-tensor}

The proof of (1) implies (2) of the classical Ambrose-Singer Theorem, makes use of the so-called canonical connection associated with the given reductive decomposition. This canonical connection can also be characterized algebraically (see~\cite[p.~43, Thm.~8.1]{N1954}) and geometrically  (see~\cite[p.~192, Prop.~2.4]{KN1969}).  We now analyze the connection constructed in the proof (1) implies (2) of Theorem~\ref{CO1.teorema principal} to get a similar concept in cohomogeneity one manifolds. 

We consider a reductive homogeneous space $M=G/H$ (not necessarily Riemannian), with reductive decomposition $\mathfrak{g}=\mathfrak{h}+\mathfrak{m}$.
For any $X\in\mathfrak{g}$, we denote by $X^*$ the fundamental vector field associated with $X$. The canonical connection $\tilde{\nabla}$ of $M$ with respect to the above decomposition is determined by (cf.~\cite[p.~188-192]{KN1969})
\[
\bigl(\tilde{\nabla}_{X^*}Y^*\bigr)_p= - \tors_p(Y_p^*,X_p^*) = \tors_p(X_p^*,Y_p^*),
\]
where $\tors$ is the torsion of $\TT$, for any $X$, $Y \in \m$, $p=eH$. From~\cite[Ch.~X, Thm.~2.6]{KN1969}, we also have $\tors_p(X_p^*,Y_p^*)= -[X,Y]_\mathfrak{m}$  and, therefore,
\[
\bigl(\tilde{\nabla}_{X^*}Y^*\bigr)_p=-[X,Y]_{\mathfrak{m}},
\]
for $X$, $Y \in \m$.

\begin{lemma}\label{lem:tildenabla-at-o}
    If $X\in\mathfrak{m}$ and $Y\in\mathfrak{g}$ then $\tilde{\nabla}_{X}Y^*=-[X,Y]_\mathfrak{m}$.
\end{lemma}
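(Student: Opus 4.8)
The plan is to reduce to the identity $(\TT_{X^*}Y^*)_p=-[X,Y]_\m$ for $X,Y\in\m$ displayed just before the lemma, exploiting the linearity of $Y\mapsto Y^*$ and of the connection. Writing $Y=Y_\h+Y_\m$ according to the reductive decomposition $\g=\h+\m$, one has $Y^*=(Y_\h)^*+(Y_\m)^*$, hence $\TT_X Y^*=\TT_X(Y_\h)^*+\TT_X(Y_\m)^*$ at $p=eH$. The second summand equals $-[X,Y_\m]_\m$ by the preceding formula, so it remains only to establish $\TT_X(Y_\h)^*\big|_p=-[X,Y_\h]_\m$; since $[X,Y]_\m=[X,Y_\h]_\m+[X,Y_\m]_\m$, adding the two contributions then yields the claim.

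For the $\h$-part, the key observation is that the fundamental vector field of an isotropy element vanishes at the base point: $(Y_\h)^*_p=0$, because $\e(sY_\h)\in H$ fixes $p=eH$. When a vector field $W$ vanishes at $p$, the value $(\TT_X W)_p$ is independent of the choice of connection (the difference of two connections is a tensor, evaluated here on $W_p=0$), and it equals the intrinsic linearization of $W$ at its zero. I would compute this linearization from the flow of $(Y_\h)^*$, which is the isometric action of $\e(sY_\h)$: its differential at the fixed point $p$ is the linear isotropy representation $\Ad(\e(sY_\h))|_\m=e^{s\,\ad_{Y_\h}}|_\m$ under the identification $\m\cong T_pM$, using $\Ad(H)\m\subseteq\m$. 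Differentiating at $s=0$ gives $\ad_{Y_\h}|_\m$, so that $\TT_X(Y_\h)^*\big|_p=[Y_\h,X]_\m=-[X,Y_\h]_\m$, as required.

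Alternatively, one can give a single uniform computation valid for every $Y\in\g$: parallel transport along the geodesic $\gamma(t)=\e(tX)\cdot p$ is $(L_{\e(tX)})_*$, so $\TT_X Y^*=\dt (L_{\e(-tX)})_*Y^*_{\gamma(t)}$. Using the equivariance $(L_g)_*Y^*=(\Ad(g)Y)^*$, this parallel-transported field equals $(e^{-t\,\ad_X}Y)^*_p$, and differentiating together with the linearity of $Z\mapsto Z^*_p$ produces $(-[X,Y])^*_p=-[X,Y]_\m$, the last equality because $([X,Y]_\h)^*_p=0$.

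The main obstacle is precisely the $\h$-component: unlike the $\m$-case, $Y_\h$ need not lie in $\m$, its fundamental field vanishes at $p$, and the previously displayed formula does not apply directly. The content of the lemma is therefore the correct interpretation of $\TT_X(Y_\h)^*$ as a linearization governed by the linear isotropy representation, together with careful sign bookkeeping in $\ad$ and the use of reductivity $[\h,\m]\subseteq\m$ to ensure that all the relevant $\m$-projections agree with the full brackets.
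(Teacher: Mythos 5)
Your proof is correct, but it follows a genuinely different route from the paper's. The paper symmetrizes via the torsion identity, writing $\TT_{X^*}Y^*=\TT_{Y^*}X^*+[X^*,Y^*]+\tors(X^*,Y^*)$, then uses tensoriality of $\tors$ and of the lower slot to replace $Y^*_p$ by $Y_\m$, the anti-homomorphism property $[X^*,Y^*]=-[X,Y]^*$, and the formula $\tors_p(X,Y)=-[X,Y]_\m$ for $X,Y\in\m$; everything then collapses to the $\m$--$\m$ display preceding the lemma, since $-[Y_\m,X]_\m-[X,Y]_\m-[X,Y_\m]_\m=-[X,Y]_\m$. You instead split $Y=Y_\h+Y_\m$ and isolate the genuinely new content in the $\h$-part, handling it by the observation that $(Y_\h)^*$ vanishes at $p$, so $\TT_X(Y_\h)^*$ is connection-independent and equals the linearization of the field at its zero, which you correctly identify with $\frac{d}{ds}\big|_{s=0}\Ad(\exp(sY_\h))|_\m=\ad_{Y_\h}|_\m$, giving $[Y_\h,X]_\m=-[X,Y_\h]_\m$ (your appeal to $\Ad(H)\m\subseteq\m$ is a convenience rather than a necessity here, since without it the isotropy differential is $Z\mapsto(\Ad(h)Z)_\m$ and the same derivative results). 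Your alternative computation is also sound and arguably the cleanest of the three: parallel transport along $\exp(tX)\cdot p$ is $(L_{\exp(tX)})_*$ --- a fact the paper itself cites from Kobayashi--Nomizu elsewhere --- and the equivariance $(L_g)_*Y^*=(\Ad(g)Y)^*$ turns the covariant derivative into $\frac{d}{dt}\big|_{t=0}\bigl(e^{-t\ad_X}Y\bigr)^*_p=(-[X,Y])^*_p=-[X,Y]_\m$, handling all $Y\in\g$ uniformly without any case split or torsion bookkeeping. What each approach buys: the paper's is a short formal manipulation staying entirely within the torsion and bracket identities already quoted; yours is more conceptual, making visible \emph{why} the isotropy part contributes through the linear isotropy representation, and your parallel-transport variant in addition avoids the identity $[X^*,Y^*]=-[X,Y]^*$ altogether. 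Note that both you and the paper rely on the same small fact $(Y_\h)^*_p=0$ to identify $Y^*_p$ with $Y_\m$.
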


\begin{proof}
    Using the formulas for $\tilde{\nabla}$ and the torsion $\tors_p$, when the corresponding vectors are in $\mathfrak{m}$, we have
    \begin{align*}
        \tilde{\nabla}_X Y^*
        &{}=\tilde{\nabla}_{X^*_p}Y^*=\bigl(\tilde{\nabla}_{X^*}Y^*\bigr)_p\\
        &{}=\bigl(\tilde{\nabla}_{Y^*}X^*\bigr)_p+[X^*,Y^*]_p+\tors_p(X^*,Y^*)\\
        &{}=\tilde{\nabla}_{Y^*_p}X^*-[X,Y]^*_p+\tors_p(X^*_p,Y^*_p)\\
        &{}=\tilde{\nabla}_{Y_\mathfrak{m}}X^*-[X,Y]_\mathfrak{m}+\tors_p(X,Y_\mathfrak{m})\\
        &{}=-[Y_\mathfrak{m},X]_\mathfrak{m}-[X,Y]_\mathfrak{m}-[X,Y_\mathfrak{m}]_\mathfrak{m}=-[X,Y]_\mathfrak{m},
    \end{align*}
    as we wanted to show.
\end{proof}

Now we calculate $\tilde{\nabla}$ at any other point.

\begin{lemma}\label{prop:tildenabla-at-go}
    If $X$, $Y\in\mathfrak{m}$ and $f\in G$, we have 
    \[
        \bigl(\tilde{\nabla}_{X^*}Y^*\bigr)_{f(p)}
        =-f_{*p}\bigl[\bigl(\Ad(f^{-1})X\bigr)_\mathfrak{m},
        \Ad(f^{-1})Y
        \bigr]_\mathfrak{m}.
    \]
\end{lemma}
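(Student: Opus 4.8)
The plan is to exploit the $G$-invariance of the canonical connection, together with Lemma~\ref{lem:tildenabla-at-o}, to transport the computation from the arbitrary point $f(p)$ back to the base point $p=eH$, where the answer is already known. Two standard facts drive the argument. First, each $f\in G$ acts as an affine transformation of $(M,\tilde{\nabla})$, so that $f_*(\tilde{\nabla}_A B)=\tilde{\nabla}_{f_*A}\,f_*B$ for all vector fields $A$, $B$. Second, fundamental vector fields transform under the adjoint representation: for any $Z\in\mathfrak{g}$ one has $f_*Z^*=(\Ad(f)Z)^*$, which follows at once from $f\cdot\exp(tZ)\cdot q=\exp(t\,\Ad(f)Z)\cdot f\cdot q$.

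First I would apply the affineness of $f^{-1}$ to the vector field $\tilde{\nabla}_{X^*}Y^*$, obtaining
\[
(f^{-1})_*\bigl(\tilde{\nabla}_{X^*}Y^*\bigr)=\tilde{\nabla}_{(f^{-1})_*X^*}\,(f^{-1})_*Y^*=\tilde{\nabla}_{(\Ad(f^{-1})X)^*}\,(\Ad(f^{-1})Y)^*,
\]
where the second equality uses the transformation law for fundamental vector fields. Evaluating at $p$ and recalling the pushforward convention $\bigl((f^{-1})_*W\bigr)_p=(f^{-1})_{*,f(p)}\bigl(W_{f(p)}\bigr)$, the left-hand side becomes $(f^{-1})_{*,f(p)}\bigl((\tilde{\nabla}_{X^*}Y^*)_{f(p)}\bigr)$.

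Next I would evaluate the right-hand side at $p$. Since a connection depends on its first argument only pointwise, and since $Z^*_p=0$ for every $Z\in\mathfrak{h}$, only the $\mathfrak{m}$-component of the direction vector survives at $p$; that is, $(\Ad(f^{-1})X)^*_p=\bigl((\Ad(f^{-1})X)_{\mathfrak{m}}\bigr)^*_p$. Setting $X'=(\Ad(f^{-1})X)_{\mathfrak{m}}\in\mathfrak{m}$ and $Y'=\Ad(f^{-1})Y\in\mathfrak{g}$, Lemma~\ref{lem:tildenabla-at-o} applies and gives
\[
\bigl(\tilde{\nabla}_{(\Ad(f^{-1})X)^*}(\Ad(f^{-1})Y)^*\bigr)_p=-\bigl[(\Ad(f^{-1})X)_{\mathfrak{m}},\,\Ad(f^{-1})Y\bigr]_{\mathfrak{m}}.
\]
Equating the two evaluations and then applying $f_{*,p}$, which cancels $(f^{-1})_{*,f(p)}$ by the chain rule, yields the claimed formula.

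The bookkeeping requiring the most care is the pushforward convention — keeping track of which differential acts at which point and in which direction — together with the observation that one must project $\Ad(f^{-1})X$ onto $\mathfrak{m}$ before invoking Lemma~\ref{lem:tildenabla-at-o}, since $\Ad(f^{-1})X$ generally leaves $\mathfrak{m}$. There is no genuine analytic difficulty: the whole argument is the interplay between the invariance of $\tilde{\nabla}$ and the vanishing of vertical fundamental vector fields at the origin.
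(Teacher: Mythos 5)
Your proof is correct and takes essentially the same route as the paper: both arguments use the $G$-invariance (affineness) of $\tilde{\nabla}$, the transformation law for fundamental vector fields ($f_*Z^*=(\Ad(f)Z)^*$, which the paper derives pointwise as $X^*_{f(p)}=f_{*p}\bigl(\Ad(f^{-1})X\bigr)^*_p$ via conjugation by $f^{-1}$), and then Lemma~\ref{lem:tildenabla-at-o} after projecting the direction vector onto $\mathfrak{m}$, justified exactly as you do by the vanishing at $p$ of fundamental vector fields coming from $\mathfrak{h}$. The only difference is presentational, namely whether the transport to the base point is phrased as a global vector-field identity or a pointwise computation.
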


\begin{proof}
Let $X\in\mathfrak{g}$. We consider $\psi_t=\exp(tX)$, the $1$-parameter group generated by $X$. We denote by $I_{f^{-1}}$ the conjugation by $f^{-1}$, and by $\Ad(f^{-1})=I_{f^{-1}*}$ the corresponding adjoint representation of $\mathfrak{g}$. 
We have
\[
X^*_{f(p)}=\dt \psi_t(f(p))
=\dt fI_{f^{-1}}(\psi_t)(p)=f_{*p}\bigl(\Ad(f^{-1})X\bigr)_p.
\]

Hence, $\bigl(f_*^{-1}X^*\bigr)_p=\bigl(\Ad(f^{-1})X\bigr)_p$. Since $\tilde{\nabla}$ is $G$-invariant, Lemma~\ref{lem:tildenabla-at-o}
yields
\begin{align*}
\bigl(\tilde{\nabla}_{X^*}Y^*\bigr)_{f(p)}
&{}=f_{*p}\bigl(\tilde{\nabla}_{f_*^{-1}X^*}f_*^{-1}Y^*\bigr)_p
=f_{*p}\tilde{\nabla}_{\bigl(\Ad(f^{-1})X\bigr)_{\mathfrak{m}}}\bigl(\Ad(f^{-1})Y\bigr)^*\\
&{}=f_{*p}\bigl[\bigl(\Ad(f^{-1})X\bigr)_{\mathfrak{m}},\Ad(f^{-1})Y
\bigr]_\mathfrak{m},
\end{align*}
as claimed.
\end{proof}

Now we return to our original problem.
We consider a Riemannian manifold $M$ that is being acted upon by a group of isometries $G$ of cohomogeneity one, all whose orbits are principal.
As we have seen, we can define a global unit vector field $\xi$ that is orthogonal to all the orbits of $G$.
We denote by $\varphi_t$ the flow of $\xi$.
Then $\varphi_t$ determines a $1$-parameter group $A$ that is isomorphic to $\mathbb{R}$ or $\mathbb{S}^1$.
As a consequence, $M$ is now a homogeneous space acted upon by $A\times G$ by the formula $(t,g)\cdot p=\varphi_t(g\cdot p)=g\cdot\varphi_t(p)$.
In particular, $A$ is contained in the center of $A\times G$, and $\xi$ is the fundamental vector field on $M$ determined by $1\in A$.

From now on, we fix $p\in M$.
We have the reductive decomposition $\mathfrak{a}+\mathfrak{m}=\mathfrak{h}+(\mathfrak{a}+\mathfrak{m})$ at $p$.
We determine the canonical connection $\tilde{\nabla}$ associated with this decomposition.
We define $\gamma(t)=\varphi_t(p)$, which is a unit speed geodesic that intersects all the orbits of $G$ orthogonally.

\begin{proposition}\label{prop:C1-canonical}
If $X$, $Y\in\mathfrak{m}$, then we have
$\bigl(\tilde{\nabla}_\xi X^*\bigr)_{\gamma(t)}=\bigl(\tilde{\nabla}_{X^*} \xi\bigr)_{\gamma(t)}
=\bigl(\tilde{\nabla}_\xi \xi\bigr)_{\gamma(t)}=0$, and
$\bigl(\tilde{\nabla}_{X^*} Y^*\bigr)_{\gamma(t)}=-\varphi_{t*p}[X,Y]_\mathfrak{m}$.
\end{proposition}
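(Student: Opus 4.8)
The plan is to derive all four identities from Lemma~\ref{prop:tildenabla-at-go}, specialized to the central factor $A$. Recall that $\xi$ is the fundamental vector field determined by the generator $\mathbf{1}\in\a$ of $A$ (corresponding to $1\in A$), and that $\gamma(t)=\varphi_t(p)=\exp(t\mathbf{1})(p)$, so that $f:=\varphi_t$ is an element of $A\subset A\times G$ with $f(p)=\gamma(t)$. In this setting the full Lie algebra is $\a+\g$, the reductive complement is $\a+\m$, and all the relevant elements $\mathbf{1},X,Y$ lie in $\a+\m$.

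First I would record the single algebraic fact on which everything rests. Since $A$ lies in the centre of $A\times G$, its Lie algebra $\a$ is central in $\a+\g$; hence $\ad(\mathbf{1})=0$. This gives, on one hand, $\Ad(\varphi_t^{-1})=\exp(-t\,\ad(\mathbf{1}))=\id$ on all of $\a+\g$, and on the other hand $[\mathbf{1},Z]=0$ for every $Z\in\a+\g$.

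For the three identities involving $\xi=\mathbf{1}^*$, I apply Lemma~\ref{prop:tildenabla-at-go} with $f=\varphi_t$ and with one or both of the two slots filled by $\mathbf{1}$. Using $\Ad(\varphi_t^{-1})=\id$, each right-hand side collapses to $-\varphi_{t*p}[\,\cdot\,,\,\cdot\,]_{\a+\m}$ of a bracket in which at least one entry is $\mathbf{1}$; centrality of $\mathbf{1}$ makes this bracket vanish. Thus $\bigl(\tilde{\nabla}_\xi X^*\bigr)_{\gamma(t)}=\bigl(\tilde{\nabla}_{X^*}\xi\bigr)_{\gamma(t)}=\bigl(\tilde{\nabla}_\xi\xi\bigr)_{\gamma(t)}=0$.

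For the last identity I again apply Lemma~\ref{prop:tildenabla-at-go} with $f=\varphi_t$ and $X,Y\in\m$; since $\Ad(\varphi_t^{-1})=\id$ this yields $\bigl(\tilde{\nabla}_{X^*}Y^*\bigr)_{\gamma(t)}=-\varphi_{t*p}[X,Y]_{\a+\m}$. The only point requiring care, which I would flag as the main (albeit minor) obstacle, is replacing the projection onto $\a+\m$ by the projection onto $\m$: because $X,Y\in\m\subset\g$ and $\g$ is a subalgebra, $[X,Y]\in\g$ has vanishing $\a$-component, so $[X,Y]_{\a+\m}=[X,Y]_{\m}$. This gives $\bigl(\tilde{\nabla}_{X^*}Y^*\bigr)_{\gamma(t)}=-\varphi_{t*p}[X,Y]_{\m}$, completing the proof.
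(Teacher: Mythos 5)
Your proof is correct and follows essentially the same route as the paper: specialize Lemma~\ref{prop:tildenabla-at-go} to $f=\varphi_t$, use centrality of $A$ to get $\Ad(\varphi_t^{-1})=\id$ and the vanishing of all brackets involving the generator of $\mathfrak{a}$, and then compute $\bigl(\tilde{\nabla}_{X^*}Y^*\bigr)_{\gamma(t)}=-\varphi_{t*p}[X,Y]_{\mathfrak{m}}$. The one step you rightly flag --- that $[X,Y]_{\mathfrak{a}+\mathfrak{m}}=[X,Y]_{\mathfrak{m}}$ because $[X,Y]\in\mathfrak{g}$ has no $\mathfrak{a}$-component --- is passed over silently in the paper, so making it explicit is a small improvement rather than a deviation.
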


\begin{proof}
Since $A$ is contained in the center of $A\times G$ we have that $\Ad(t)$, $t\in A$, acts as the identity on $\mathfrak{a}+\mathfrak{g}$.
Thus, $\Ad(t^{-1})\xi=\xi$ and $\Ad(t^{-1})X=X$ for all $X\in\mathfrak{m}$.
Since $[\mathfrak{a},\mathfrak{a}+\mathfrak{g}]=0$, the first three equalities of the statement follow from Lemma~\ref{prop:tildenabla-at-go}.
Finally, if $X$, $Y\in\mathfrak{m}$, Lemma~\ref{prop:tildenabla-at-go} yields
\begin{align*}
\bigl(\tilde{\nabla}_{X^*}Y^*\bigr)_{\gamma(t)}
&{}=-\varphi_{t*p}\bigl[\bigl(\Ad(t^{-1})X\bigr)_{\mathfrak{a}+\mathfrak{m}},
\Ad(t^{-1})Y
\bigr]_{\mathfrak{a}+\mathfrak{m}}\\
&{}=-\varphi_{t*p}\bigl[X_{\mathfrak{a}+\mathfrak{m}},
Y
\bigr]_{\mathfrak{a}+\mathfrak{m}}
=-\varphi_{t*p}\bigl[X,Y\bigr]_{\mathfrak{m}},
\end{align*}
which finishes the proof.
\end{proof}

Finally, we calculate the cohomogeneity one structure $S=\nabla-\tilde{\nabla}$, where $\nabla$ is the Levi-Civita connection of $M$.

\begin{proposition}\label{prop:difference}
Let $X$, $Y$, $Z\in\mathfrak{m}$, and $a$, $b$, $c\in\mathbb{R}$.
Then, the difference tensor $S$ is given by
\begin{align*}
&2g\big( S_{a\xi+X^*}(b\xi+Y^*),c\xi+Z^*\big)_{\gamma(t)}={}\\
&\qquad{}a\frac{d}{dt}g\big(\varphi_{t*p}Y,\varphi_{t*p}Z\big)
+b\frac{d}{dt}g\big(\varphi_{t*p}X,\varphi_{t*p}Z\big)
-c\frac{d}{dt}g\big(\varphi_{t*p}X,\varphi_{t*p}Y\big)\\[1ex]
&\qquad{}+g\big(\varphi_{t*p}[X,Y]_\mathfrak{m},\varphi_{t*p}Z\big)
-g\big(\varphi_{t*p}[X,Z]_\mathfrak{m},\varphi_{t*p}Y\big)
-g\big(\varphi_{t*p}[Y,Z]_\mathfrak{m},\varphi_{t*p}X\big).
\end{align*}
\end{proposition}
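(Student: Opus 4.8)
The plan is to write $\T=\TT+S$, compute $2g(\T_UV,W)$ for $U=a\xi+X^*$, $V=b\xi+Y^*$, $W=c\xi+Z^*$ by the Koszul formula
\[
2g(\T_UV,W)=U\,g(V,W)+V\,g(U,W)-W\,g(U,V)+g([U,V],W)-g([U,W],V)-g([V,W],U),
\]
and then subtract $2g(\TT_UV,W)$, which Proposition~\ref{prop:C1-canonical} already determines. Everything is evaluated along $\gamma(t)=\varphi_t(p)$. I will use four elementary facts throughout. First, since $A$ lies in the center of $A\times G$, the field $\xi$ commutes with every fundamental vector field, $[\xi,X^*]=0$, while the fundamental bracket relation is $[X^*,Y^*]=-[X,Y]^*$. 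Second, each $X^*$ with $X\in\g$ is a Killing field, so $X^*g(V,W)=g([X^*,V],W)+g(V,[X^*,W])$. Third, $\varphi_t$ preserves every fundamental vector field, so $X^*_{\gamma(t)}=\varphi_{t*p}X$ for $X\in\m$, whereas $H^*_{\gamma(t)}=0$ for $H\in\h$ because the isotropy at $\gamma(t)$ coincides with that at $p$; consequently $[X,Y]^*_{\gamma(t)}=\varphi_{t*p}[X,Y]_\m$. Fourth, $\xi_{\gamma(t)}=\gamma'(t)$ is orthogonal to all orbit vectors $\varphi_{t*p}(\cdot)$, so inner products of $\xi$ against images $\varphi_{t*p}W$ vanish and $\xi$-derivatives along $\gamma$ become $\tfrac{d}{dt}$.

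For the bracket part of Koszul I first compute $[U,V]=-[X,Y]^*$, $[U,W]=-[X,Z]^*$, $[V,W]=-[Y,Z]^*$ from the first fact, and evaluate them along $\gamma$ with the third and fourth facts. This contributes
\[
-g(\varphi_{t*p}[X,Y]_\m,\varphi_{t*p}Z)+g(\varphi_{t*p}[X,Z]_\m,\varphi_{t*p}Y)+g(\varphi_{t*p}[Y,Z]_\m,\varphi_{t*p}X).
\]
For the three metric-derivative terms I split each direction into its $\xi$- and $X^*$-parts. The $\xi$-parts, after discarding the constant $g(\xi,\xi)$ and the vanishing mixed products, are precisely $a\tfrac{d}{dt}g(\varphi_{t*p}Y,\varphi_{t*p}Z)$, $b\tfrac{d}{dt}g(\varphi_{t*p}X,\varphi_{t*p}Z)$ and $-c\tfrac{d}{dt}g(\varphi_{t*p}X,\varphi_{t*p}Y)$, which are exactly the three derivative terms of the statement.

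The orbit-direction parts $X^*g(V,W)$, $Y^*g(U,W)$ and $-Z^*g(U,V)$ are the crux: rather than dropping them, I rewrite each via the Killing identity (the second fact), so they again become sums of inner products of brackets. Collecting them yields $-2g(\varphi_{t*p}[X,Z]_\m,\varphi_{t*p}Y)-2g(\varphi_{t*p}[Y,Z]_\m,\varphi_{t*p}X)$. Finally, Proposition~\ref{prop:C1-canonical} gives $\TT_UV\big|_{\gamma(t)}=-\varphi_{t*p}[X,Y]_\m$ (the $\xi$-contributions to $\TT$ vanish), so that $2g(\TT_UV,W)=-2g(\varphi_{t*p}[X,Y]_\m,\varphi_{t*p}Z)$. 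Adding the Koszul brackets, the Killing-derivative brackets, and $-2g(\TT_UV,W)$, the coefficients of $g(\varphi_{t*p}[X,Y]_\m,\varphi_{t*p}Z)$, $g(\varphi_{t*p}[X,Z]_\m,\varphi_{t*p}Y)$ and $g(\varphi_{t*p}[Y,Z]_\m,\varphi_{t*p}X)$ collapse to $+1$, $-1$ and $-1$, producing the claimed formula. The main obstacle is purely one of bookkeeping: keeping the signs consistent across the three sources of bracket terms, and in particular recognizing that the orbit-direction metric derivatives cannot be ignored but must be converted through the Killing equation, since $\xi$ (unlike $X^*$) is not a Killing field.
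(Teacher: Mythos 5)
Your proof is correct and takes essentially the same route as the paper: both arguments compute $2g\big(\nabla_{a\xi+X^*}(b\xi+Y^*),c\xi+Z^*\big)$ along $\gamma$ using the Killing property of the fundamental fields, the centrality of $A$ (so $[\xi,X^*]=0$ and $X^*_{\gamma(t)}=\varphi_{t*p}X$), the vanishing of $\mathfrak{h}^*$ along $\gamma$, the conversion of $\xi$-derivatives into $\frac{d}{dt}g\big(\varphi_{t*p}X,\varphi_{t*p}Y\big)$, and finally subtract $\TT$ via Proposition~\ref{prop:C1-canonical}. The only difference is organizational: you expand the full Koszul formula and re-derive inline the Killing-field identity $2g\big(\nabla_{X^*}Y^*,Z^*\big)=g\big([X^*,Y^*],Z^*\big)+g\big([X^*,Z^*],Y^*\big)+g\big([Y^*,Z^*],X^*\big)$, whereas the paper first isolates the normal components $g\big(\nabla_{X^*}Y^*,\xi\big)$ and quotes that identity from the literature; your coefficient bookkeeping $(+1,-1,-1)$ checks out.
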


\begin{proof}
Since $\xi$ is a geodesic vector field, $\nabla_\xi\xi=0$.
Moreover, since $Y^*$, $Y\in\mathfrak{m}$, is Killing, we have $g\big(\nabla_\xi Y^*,\xi\big)=0$ and $g\big(\nabla_\xi Y^*,Z^*\big)=-g\big(\nabla_{Z^*}Y^*,\xi\big)$.
Also, $g\big(\xi,\xi\big)=1$ implies $g\big(\nabla_{X^*}\xi,\xi\big)=0$, and $g\big( X^*,\xi\big)=0$ implies $g\big(\nabla_{X^*}\xi,Z^*\big)=-g\big(\nabla_{X^*}Z^*,\xi\big)$.
Altogether this gives
\begin{equation}\label{eq:S}
\begin{aligned}
2g\big( \nabla_{a\xi+X^*}(b\xi+Y^*),c\xi+Z^*\big)={}
&-2ag\big(\nabla_{Z^*}Y^*,\xi\big)
-2bg\big(\nabla_{X^*}Z^*,\xi\big)\\
&+2cg\big(\nabla_{X^*}Y^*,\xi\big)+2g\big(\nabla_{X^*}Y^*,Z^*\big).
\end{aligned}
\end{equation}

We calculate $g\big(\nabla_{X^*}Y^*,\xi\big)_{\gamma(t)}$ for $X$, $Y\in\mathfrak{m}$.
First note that $X^*_{\gamma(t)}=X^*_{\varphi_t(p)}=\varphi_{t*p}X^*_p$ because $\varphi_t$ commutes with the action of $G$.
The fact that $\xi$ is geodesic, $X^*$ and $Y^*$ are Killing vector fields, and the Levi-Civita connection is torsion-free yields
\begin{align*}
\frac{d}{dt}g\big(\varphi_{t*p}X,\varphi_{t*p}Y\big)
&{}=\frac{d}{dt}g\big( X^*_{\gamma(t)},Y^*_{\gamma(t)}\big)\\
&{}=g\big(\nabla_{\xi_{\gamma(t)}}X^*,Y^*_{\gamma(t)}\big)
+g\big( X^*,\nabla_{\xi_{\gamma(t)}}Y^*_{\gamma(t)}\big)\\
&{}=-g\big(\nabla_{Y^*_{\gamma(t)}}X^*,\xi_{\gamma(t)}\big)
-g\big(\nabla_{X^*_{\gamma(t)}}Y^*,\xi_{\gamma(t)}\big)\\
&{}=-g\big( 2\nabla_{X^*_{\gamma(t)}}Y^*-[X^*,Y^*]_{\gamma(t)},\xi_{\gamma(t)}\big)\\
&{}=-2g\big( \nabla_{X^*_{\gamma(t)}}Y^*,\xi_{\gamma(t)}\big).
\end{align*}

The last addend of~\eqref{eq:S} can be calculated from~\cite[7.27]{B1987} as
\[
g\big(\nabla_{X^*}Y^*,Z^*\big)=g\big([X^*,Y^*],Z^*\big)+g\big([X^*,Z^*],Y^*\big)+g\big([Y^*,Z^*],X^*\big).
\]
Since $X^*_{\varphi_t(p)}=\varphi_{t*p}X_p$, we see that $X^*$ is $\varphi_t$-related to itself.
Thus, $[X^*,Y^*]_{\gamma(t)}=[X^*,Y^*]_{\varphi_t(p)}=\varphi_{t*p}[X^*,Y^*]_p=-\varphi_{t*p}[X,Y]_\mathfrak{m}$.

Substituting in~\eqref{eq:S} we obtain the Levi-Civita connection of $M$ along $\gamma$.
This, together with Proposition~\ref{prop:C1-canonical}, finishes the proof of Proposition~\ref{prop:difference}.
\end{proof}

\begin{remark}
Let $A_{\gamma(t)}$ denote the shape operator of $G\cdot\gamma(t)$ at $\gamma(t)$ with respect to the normal vector $\xi_{\gamma(t)}$, that is, $A_{\gamma(t)}(v)=-\nabla_v\xi$, $v\in T_{\gamma(t)}\bigl(G\cdot \gamma(t)\bigr)$.
Then,
\[
g\big( A_{\gamma(t)}X^*_{\gamma(t)},Y^*_{\gamma(t)}\big)
=-g\big(\nabla_{X^*}Y^*,\xi\big)_{\gamma(t)}
=\frac{1}{2}\frac{d}{dt}g\big(\varphi_{t*p}X,\varphi_{t*p}Y\big).
\]

Furthermore, $X^*_{\gamma(t)}$ is a Jacobi vector field along $\gamma$ with $\frac{d}{dt}_{\vert_0}X^*_{\gamma(t)}=-A_p X^*_p$.
Thus, if one is able to solve the Jacobi equation along $\gamma$, then $X^*_{\gamma(t)}$ can be calculated explicitly, and so can the shape operator $A_{\gamma(t)}$.
\end{remark}

\begin{definition}
    The cohomogeneity one structure $S$ given in Proposition~\ref{prop:difference} is called \emph{the canonical cohomogeneity one structure}.
\end{definition}
From Proposition~\ref{prop:C1-canonical}, the canonical cohomogeneity one structure satisfies
\begin{equation*}
        \tors (\xi, \ \cdot \ ) = 0,
\end{equation*}
    where $\tors$ is the torsion of $\TT = \T - S$.

\begin{proposition}
    Let $S_1$ and $S_2$ be two cohomogeneity one structures on $(M,g)$ associated with $\xi$ such that $\tors_1 (\xi, \ \cdot \ ) = 0$ and $\tors_2 (\xi, \ \cdot \ ) = 0$. If $\Pi_{\mathcal{T}(V)} (S_1)  = \Pi_{\mathcal{T}(V)} (S_2)$, then $S_1 = S_2$.
\end{proposition}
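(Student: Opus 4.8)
The plan is to use that the decomposition $\mathcal{S}(V)=\mathcal{T}(V)+\mathcal{II}(V)+\mathcal{Z}(V)+S_U^1(V)$ is a \emph{direct} sum of $\SO(n)$-submodules, so that $S_1=S_2$ is equivalent to the equality of the projections of $S_1$ and $S_2$ onto each of the four summands. The projections onto $\mathcal{T}(V)$ agree by hypothesis, so I would reduce the proof to showing that the $\mathcal{II}(V)$-, $\mathcal{Z}(V)$- and $S_U^1(V)$-components coincide; these are precisely the components involving $\xi$, which is where the torsion hypothesis will be used.

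First I would show that the $\mathcal{II}(V)$-component is the same for every cohomogeneity one structure associated with $\xi$, hence in particular for $S_1$ and $S_2$. For $X,Z\in\mathcal{D}$ the condition $(S_i)_X\cdot g=0$ (Proposition~\ref{prop:properties}(2)) means $(S_i)_X\in\so(V)$, so that $g((S_i)_X\xi,Z)=-g((S_i)_XZ,\xi)$, and by Proposition~\ref{prop:properties}(3) the right-hand side equals $-g(\mathrm{II}(X,Z),\xi)$. Since $\mathrm{II}$ is the second fundamental form of the leaves of $\mathcal{D}$, which depends only on $(M,g,\xi)$ and not on the connection, the $\mathcal{II}(V)$-data of $S_1$ and of $S_2$ are identical.

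Next I would bring in the torsion hypothesis. Writing the torsion of $\TT_i=\T-S_i$ as $\tors_i(X,Y)=(S_i)_Y X-(S_i)_X Y$, the assumption $\tors_i(\xi,Y)=0$ becomes the identity $(S_i)_\xi Y=(S_i)_Y\xi$ for all $Y$. For the $\mathcal{Z}(V)$-component, taking $Y,Z\in\mathcal{D}$ gives $g((S_i)_\xi Y,Z)=g((S_i)_Y\xi,Z)$, i.e.\ the $\mathcal{Z}(V)$-datum of $S_i$ equals its $\mathcal{II}(V)$-datum, already shown to be intrinsic; hence the $\mathcal{Z}(V)$-components agree. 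For the $S_U^1(V)$-component, taking $Y\in\mathcal{D}$ and using the same identity together with $(S_i)_Y\in\so(V)$ yields $g((S_i)_\xi Y,\xi)=g((S_i)_Y\xi,\xi)=0$, so this component vanishes for both structures. Collecting the four cases forces $S_1=S_2$.

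The step I expect to be most delicate is the index bookkeeping in the last two paragraphs. One must track the order of the three arguments carefully, invoke the skew-symmetry $(S_i)_X\in\so(V)$ only where it is valid, namely for $X\in\mathcal{D}$ and never for $X=\xi$ (indeed $(S_i)_\xi$ is a general endomorphism), and translate the hypothesis $\tors_i(\xi,\cdot)=0$ correctly into the identity $(S_i)_\xi Y=(S_i)_Y\xi$ through the relation between torsion and difference tensor. Once these identifications are set up, the conclusion is an immediate consequence of the directness of the module decomposition.
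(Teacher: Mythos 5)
Your proof is correct and follows essentially the same route as the paper: both arguments rest on the identity $\tors_i(\xi,A)=(S_i)_A\xi-(S_i)_\xi A$ to reduce the $\xi$-direction data to $(S_i)_A\xi$, and on Proposition~\ref{prop:properties}\,(2)--(4) to identify that data with the intrinsic second fundamental form of the leaves. The only difference is bookkeeping: you run the case analysis explicitly through the four-module projections, whereas the paper compresses the $\mathcal{II}(V)$-, $\mathcal{Z}(V)$- and $S_U^1(V)$-cases into the single computation of $g\bigl((S_i)_{a\xi+X}\xi,\,c\xi+Y\bigr)$.
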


\begin{proof}
    We notice that the distribution $\mathcal{D} = \{ X : \: g(X, \xi) = 0 \}$ is independent of the choice of $S_1$ and $S_2$. Therefore, it makes sense to consider $\Pi_{\mathcal{T}(V)} (S_1)  = \Pi_{\mathcal{T}(V)} (S_2)$. This means that $(S_1)_X Y = (S_2)_X Y$ for all $X$, $Y \in \mathcal{D}$. For $i=1$, $2$, as the Levi-Civita connection is torsion-free,
    \[
    \tors_i (\xi, A) = - (S_i)_{\xi} A + (S_i)_A \xi, \quad \forall A\in TM.   
    \]
    We apply $\tors_i (\xi, \cdot) = 0$ and obtain $(S_i)_{\xi} A = (S_i)_A \xi$. For all $a$, $b\in \R$, we apply statements (1) to (4) in Proposition~\ref{prop:properties},
    \begin{align*}
        g((S_i)_{a\xi +X} \xi , c\xi +Y) &{}\overset{(4)}{=} g((S_i)_{X} \xi , c\xi +Y)\\
        & {}\overset{(2)}{=} g((S_i)_{X} \xi , Y)\\
        & {}\overset{(3)}{=} - \mathrm{II}(X,Y),
    \end{align*}
    which is independent of the choice of $i$. Then $S_1 = S_2$.    
\end{proof}

\begin{remark}
    The canonical cohomogeneity one structure $S$ is the unique (up to homogeneous structure in the leaves) cohomogeneity one structure satisfying, 
    \begin{equation*}
        \tors (\xi, \ \cdot \ ) = 0,
    \end{equation*}
    where $\tors$ is the torsion of $\TT = \T - S$.
\end{remark}

\section{Examples}\label{s:examples}

\subsection*{Parallel hyperplanes in Euclidean spaces}

Let $(\R ^{n}, g_{\R^n})$ be the Euclidean space. If $H$ is a closed subgroup of $\SO (n-1)$, then $G= H \ltimes \R ^{n-1}$ acts transitively on $(\R ^{n-1}, g_{\R^{n-1}})$. Consequently, $G$ acts with cohomogeneity one on $\R^{n}$. For $p = (p_1, \ \ldots \ , p_n)$, the orbit space $G \cdot p = \{ (x_1, \ \ldots \ x_{n-1}, p_n) : \: x_i \in \R \}$ gives a foliation without singular orbits. The unit vector field orthogonal to the leaves is $\xi _p = (0, \ \ldots \ 0,1)$ and its flow is $\varphi_t (p) = (p_1, \ \ldots \ , p_n +t)$. Since $\varphi_t$ is an isometry and $\R^{n-1}$ is an abelian ideal of $\R^{n}$, the first and second rows of Proposition~\ref{prop:difference} are zero. Thus, $S = 0$.

\subsection*{Concentric spheres in Euclidean spaces}
The space $(\R ^l - \{0\}, g_{\R^l})$ equipped with the Euclidean metric is isometric to the warped product $(\R^+ \times_f \mathbb{S}^{l-1},g)$, $g=dr^2 + f(r)^2 g_{\mathbb{S}^{l-1}}$, where $r$ is the distance to the origin, $f(r)=r$, and $g_{\mathbb{S}^{l-1}}$ is the round metric of the sphere. Let $\pi _2\colon\R^+ \times_f \mathbb{S}^{l-1} \to \mathbb{S}^{l-1}$ be the projection onto the second factor. Consider a transitive Lie group $G$ of isometries of $(\mathbb{S}^{l-1},g_{\mathbb{S}^{l-1}})$ and the canonical connection $\TT^{\mathbb{S}^{l-1}}$ associated with it. 
We also consider the pseudo-group $(\R^+, +)$ acting transitively on $\R^+$. The Lie pseudo-group $\R^+ \times G$ acts transitively on $\R^+ \times_f\mathbb{S}^{l-1}$ and there is an AS-connection $\TT$ (non-necessarily Riemannian and non necessarily complete) such that
\begin{equation*}
	\TT \tors = 0, \qquad \TT \curv = 0.
\end{equation*}
Actually, 
\begin{equation*}
	\TT = \T ^{\R ^+} \oplus \TT ^{\mathbb{S}^{l-1}},
\end{equation*}
where $\T ^{\R ^+}$ is the Levi-Civita connection for $\R ^+$.

The action of $G$ is of cohomogeneity one on $\R^+ \times _f \mathbb{S}^{l-1} = \R^+ \times G/H$ and it gives a foliation of spheres $\mathbb{S} ^{l-1} (r)$ of radius $r > 0$. The normal vector field for each one of these submanifolds is, $\eta (r, p) = \partial/ \partial r$ (in Euclidean coordinates, $\eta(x) = \frac{x}{||x||}$) and the associated CO1-AS connection is $\TT$. In terms of the warped elements (see~\cite[p.~206, Prop.~35]{O1983} the Levi-Civita connection of $\R^+ \times _f \mathbb{S}^{l-1}$ is
\begin{equation*}
	\T = \T^{\R ^+} \oplus \T ^{\mathbb{S}^{l-1}} - \frac{f^2 \pi_2{}^* (g_{\mathbb{S}^{l-1}})}{f} \mathrm{grad} (f) + \sum_{i=1} ^{l-1} \frac{\eta (f)}{f}  (\eta^* \otimes de^i +de^i \otimes \eta^*)  \pa{e^i},
\end{equation*}
where $\T ^{\mathbb{S}^{l-1}}$ is the Levi-Civita connection for the sphere $\mathbb{S}^{l-1}$, $\left\{\pa{e^1} ,\ \dots \ ,\pa{e^{l-1}} \right\}$ is an orthonormal basis of $T_x \mathbb{S}^{l-1}$ and $\left\{ d e^1 ,\ \dots \ , de^{l-1}\right\}$ is its dual basis. As $\eta(f) = 1$, we have
\begin{equation*}
	\T = \T^{\R ^+} \oplus \T ^{\mathbb{S}^{l-1}} - \frac{f^2 \pi_2 {}^* (g_{\mathbb{S}^{l-1}})}{||x||} \eta + \frac{1}{||x||}\sum_{i=1} ^{l-1}   \left(\eta^* \otimes de^i +de^i \otimes \eta^* \right)  \pa{e^i}.
\end{equation*}

Hence, the sphere foliation of $(\R^{n-k-1} -\{0\} , g)$ induced by the Lie group $G$ possesses a cohomogeneity one structure tensor 
\begin{equation} \label{CO1.eq:co1 structure spheres}
	S =  0 \oplus S^{\mathbb{S}^{l-1}} - \frac{f^2 \pi_2 {}^* (g_{\mathbb{S}^{l-1}})}{||x||}\eta + \frac{1}{||x||} \sum_{i=1} ^{l-1} \left(\eta^* \otimes de^i +de^i \otimes \eta^* \right)  \pa{e^i},
\end{equation}
where $S^{\mathbb{S}^{l-1}} = \T^{\mathbb{S}^{l-1}} - \TT ^{\mathbb{S}^{l-1}}$ is the homogeneous structure for the action of $G$ on $\mathbb{S}^{l-1} = G/H$. In other words, 
\begin{align*}
	S_B C ={}&
    S^{\mathbb{S}^{l-1}}_{(\pi_2)_* B} (\pi_2)_*C - \frac{g((\pi_2)_* B,(\pi_2)_* C))}{r} \ \pa{r}\\
    &{}+ \frac{1}{r} \left(g\left(\pa{r}, B\right)(\pi_2)_* C + g\left(\pa{r}, C\right) (\pi_2)_*B\right).
\end{align*}

In particular, for $p\in \R^+ \times _f \mathbb{S}^{l-1}$, if we take $B$, $C \in T_p \mathbb{S}^{l-1}$, we have
\begin{align*}
	S_B C ={}&
    S^{\mathbb{S}^{l-1}}_{B} C - r g_{\mathbb{S}^{l-1}}(B,C) \eta \\
    S_B \eta ={}& \frac{1}{r} B, \quad S_{\eta} C ={} \frac{1}{r} C.
\end{align*}

In a broader sense, let $(M_1, g_2,\TT_1)$ and  $(M_2, g_2, \TT_2)$ be two Riemannian AS-manifolds, that is, 
\begin{equation*}
    \begin{array}{cccccccccccc}
         \TT_1  \curv_1 &=& 0, &&  \TT_1  \tors_1 &=& 0, && \TT_1 g_1 &=& 0, \\
    \TT_2 \curv_2 &=& 0, && \TT_2 \tors_2 &=& 0, && \TT_2 g_2 &=& 0,
    \end{array}
\end{equation*}
where $\curv_i$ and $\tors_i$ are the curvature and torsion of $\TT_i$ with $i = 1,2$. On the one hand, the product manifold $M_1 \times M_2$
is a general AS-manifold with AS-connection
\begin{equation*}
    \TT = \TT_1 \oplus \TT_2.
\end{equation*}
On the other hand, for a positive function $f \colon M_1 \to \R^+$, we can consider the warped product $\left(M_1 \times_f M_2,\, g= \pi_1 {}^* (g_1) + (f \circ \pi_1)^2 \pi_2{}^* (g_2)\right)$ where $\pi_i$ is the projection onto $M_i$ with $i = 1$ or $2$. We write $f$ instead of $f \circ \pi_1$. In particular, if the dimension of $M_1$ is $1$, the warped product is a regular (locally) cohomogeneity one manifold and the Levi-Civita connection in local coordinates is given by (again,~\cite[p.~206, Prop.~35]{O1983})
\begin{equation*}
	\T = \T_1 \oplus \T_2 - \frac{f^2 \pi_2{}^* (g_2)}{f} \mathrm{grad} (f) + \sum_{i=1} ^{l-1} \frac{\eta (f)}{f}  (\eta^* \otimes de^i +de^i \otimes \eta^*)  \pa{e^i},
\end{equation*}
where $\eta$ is a unit vector field in $(M_1,g_1)$, $\left\{\pa{e^1} ,\ \dots \ ,\pa{e^{l-1}} \right\}$ is a local orthonormal basis of $M_2$, and $\left\{ d e^1 ,\ \dots \ , de^{l-1}\right\}$ is its dual basis. Finally, the (1, 2)-tensor
\begin{equation*}
    S = \T -\TT
\end{equation*}
is a cohomogeneity one structure.

\subsection*{The horosphere foliation in the real hyperbolic space}

The real hyperbolic space $\RH(n)$ with the hyperbolic metric  of constant scalar curvature equal to  $-1$ is isometric to the warped product $(\R\times_f \R ^{n-1}, g = dt^2 + f(t) ^2 g_{\R^{n-1}})$, with $f(t) = e^{-t}$. Following the procedure outlined above,
we can introduce the AS-connection $\TT$, defined as 
	\begin{equation*}
		\TT = \T^{\R} \oplus \T ^{\R^{n-1}},
	\end{equation*}
	where $\T^{\R }$ and $\T ^{\R^{n-1}}$ are the Levi-Civita connections of $\R$ and $\R^{n-1}$, respectively. Note that $\TT$ corresponds to the Levi-Civita connection for the Euclidean metric in $\R^{n}$. However, $\TT$ is not metric with respect to $g$, even though it satisfies
	\begin{equation*}
		\TT \curv = 0,\quad \TT \tors = 0,\quad \TT \xi = 0, \quad \TT_X g=0, \quad \tors (X,Y) \in \mathcal{L}(\xi )^{\perp},
	\end{equation*}
	where $\xi = \partial /\partial t$ and $\curv$ and $\tors$ are the curvature and torsion of $\TT$. 
    In other words, $\TT$ is an AS-connection of Theorem~\ref{CO1.teorema principal} and $S = \T - \TT$ is a cohomogeneity one structure, where $\T$ is the Levi-Civita connection of $\RH(n)$. We now compute $S$ explicitly.
	
	The Levi-Civita connection for a warped product is
	\[
		\T = \T^{\R} \oplus \T ^{\mathbb{R}^{n-1}} - \frac{f^2 \pi_2{}^* (g_{\R^{n-1}})}{f} \mathrm{grad} (f) + \sum_{i=1} ^{n-1} \frac{\xi (f)}{f}  \left(\xi^* \otimes de^i +de^i \otimes \xi^* \right)  \pa{e^i},
  \]
  that is
  \[
		\T = \T^{\R} \oplus \T ^{\mathbb{R}^{n-1}} + f^2 \pi_2 {}^* (g_{\R^{n-1}}) \ \xi  - \sum_{i=1} ^{n-1} \left(\xi^* \otimes de^i +de^i \otimes \xi^* \right)  \pa{e^i}.
    \]
	The cohomogeneity-1 structure $S = \T -\TT$ is thus
	\begin{align*}
		S &=f^2 \pi_2 {}^* (g_{\R^{n-1}}) \ \xi - \sum_{i=1} ^{l-1} \left(\xi^* \otimes de^i +de^i \otimes \xi^* \right)  \pa{e^i}\\
	   & = g((\pi_2) _* X , (\pi_2) _* Y) \xi - g(\xi, X) (\pi_2) _* Y - g(\xi, Y) (\pi_2) _* X,
	\end{align*}
and putting $\left( (\pi_2)_* X\right) = X - g(X,\xi) \xi$ and $(\pi_2)_* Y = Y - g(Y,\xi) \xi$, we finally get
\begin{equation*}
        S_X Y = g(X , Y) \xi - g(\xi, X) Y - g(\xi, Y) X + g( X,\xi) g(\xi, Y)\xi.
\end{equation*}

\subsection*{A non-canonical cohomogeneity one structure}

In all the examples above the connection $\TT$ is the canonical connection of Section~\ref{s:difference-tensor}. We now present a modification of the last example such that, even though it has the same foliation, the connection $\TT$ is not canonical.

The real hyperbolic space $\RH(n)$ is characterized by the existence of a homogeneous structure $\bar{S}$ of linear type, see \cite[Thm. 5.2]{TV1983}. This has the expression,
\[
\bar{S}_XY = g(X,Y) \xi - g(Y,\xi)X
\]
where, using the coordinates above, $\xi = \pa{t}$ and $\tilde{\bar{\nabla}}$ satisfies, 
\[
\tilde{\bar{\nabla}} \tilde{\bar{R}}= 0, \quad \tilde{\bar{\nabla}} \tilde{\bar{T}}= 0, \quad \tilde{\bar{\nabla}} \bar{S} = 0, \quad \tilde{\bar{\nabla}} g = 0,
\]
where $\bar{S} = \T - \tilde{\bar{\nabla}}$,  the Levi-Civita connection is $\T$, and the curvature and torsion of $\tilde{\bar{\nabla}}$ are  $\tilde{\bar{R}}$ and $\tilde{\bar{T}}$, respectively. We have
\[
- \tilde{\bar{T}}_A B = \bar{S}_AB -\bar{S}_BA = g(A,\xi)B - g(B,\xi)A.
\]
Then, for all $X$, $Y \in \mathcal{D} = \{ X: g(X, \xi) = 0 \}$,
\[
\tilde{\bar{T}}_X Y = 0, \quad \tilde{\bar{T}}_{\xi} Y = - Y \neq 0,
\]
which means that $\bar{S}$ is a cohomogeneity one structure, but not a canonical cohomogeneity one structure.



\printbibliography[heading=bibintoc, title=References]

\end{document}